\newenvironment{proof}[1][]
{
   \noindent\textrm{\bf Proof%
   \ifthenelse{\equal{#1}{}}{:}{~#1:} }\rm
}
{
   \hfill$\square$
   \bigskip
}
\newtheorem{theorem}{Theorem}[section]
\newtheorem{proposition}[theorem]{Proposition}
\newtheorem{lemma}[theorem]{Lemma}
\newtheorem{corollary}[theorem]{Corollary}
\newtheorem{definition}[theorem]{Definition}
\newtheorem{remark}[theorem]{Remark}
\newcommand{\R}[0]{\mathbb{R}}
\renewcommand{\H}[0]{\mathcal{H}}
\newcommand{\ol}[1]{\overline{#1}}
\newcommand{\ra}[0]{\rightarrow}
\newcommand{\abs}[1]{\left\vert #1 \right\vert}
\newcommand{\norm}[1]{\left\| #1 \right\|}
\newcommand{\bea}{\begin{eqnarray*}}
\newcommand{\eea}{\end{eqnarray*}}
\newcommand{\bean}{\begin{eqnarray}}
\newcommand{\eean}{\end{eqnarray}}
\newcommand{\been}{\begin{equation}}
\newcommand{\een}{\end{equation}}
\newcommand{\bee}{\begin{equation*}}
\newcommand{\ee}{\end{equation*}}
\newcommand{\bdefi}{\begin{definition}}
\newcommand{\edefi}{\end{definition}}
\newcommand{\bthe}{\begin{theorem}}
\newcommand{\ethe}{\end{theorem}}
\newcommand{\bpro}{\begin{proof}}
\newcommand{\epro}{\end{proof}}
\newcommand{\blem}{\begin{lemma}}
\newcommand{\elem}{\end{lemma}}
\newcommand{\bcor}{\begin{corollary}}
\newcommand{\ecor}{\end{corollary}}
\newcommand{\brem}{\begin{remark}}
\newcommand{\erem}{\end{remark}}
\newcommand{\bpropo}{\begin{proposition}}
\newcommand{\epropo}{\end{proposition}}
\newcommand{\bsatz}{\begin{satz}}
\newcommand{\esatz}{\end{satz}}
\newcommand{\bbsp}{\begin{beispiel}\rm}
\newcommand{\ebsp}{\end{beispiel}}
\newcommand{\bbem}{\begin{bemerkung}}
\newcommand{\ebem}{\end{bemerkung}}
\newcommand{\bpm}{\begin{pmatrix}}
\newcommand{\epm}{\end{pmatrix}}
\renewcommand{\div}{\text{div}}
\renewcommand{\epsilon}{\varepsilon}
\newcommand{\supp}[1]{\text{supp}(#1)}
\renewcommand{\supp}{\operatorname*{supp}}
\newcommand{\skp}[1]{\left< #1 \right>}
\newcommand{\T}{\mathcal{T}}
\newcommand{\depth}{\text{depth}}
\newcommand{\lefttriplenorm}{\ensuremath{\left| \! \left| \! \left|}}
\newcommand{\righttriplenorm}{\ensuremath{\right| \! \right| \! \right|}}
\newcommand{\triplenorm}[1]{\lefttriplenorm #1 \righttriplenorm}
\newcommand{\level}{\text{level}}
\newcommand{\Pfar}{P_{\text{far}}}
\newcommand{\diam}{\operatorname*{diam}}
\newcommand{\dist}{\operatorname{dist}}
\newcommand{\Cdim}{C_{\dim}}
\title{$\H$-matrix approximability of the inverses of FEM matrices}
\author{Markus Faustmann \and Jens Markus Melenk \and Dirk Praetorius}
\begin{document}

\maketitle

\abstract{
We study the question of approximability for the inverse of the FEM stiffness matrix 
for (scalar) second order elliptic boundary value problems
by blockwise low rank matrices such as 
those given by the $\mathcal{H}$-matrix format introduced in~\cite{Hackbusch2}.
We show that 
exponential convergence in the local block rank $r$ can be achieved.  
We also show that exponentially accurate $LU$-decompositions in the $\H$-matrix format 
are possible for the stiffness matrices arising in the FEM. 
Unlike prior works, our analysis avoids any coupling of the block rank $r$ and
the mesh width $h$ and also covers mixed Dirichlet-Neumann-Robin boundary conditions.
}

\section{Introduction}
The format of ${\mathcal H}$-matrices was introduced in~\cite{Hackbusch2} as  
blockwise low-rank matrices that permit storage, application, and even a full (approximate) 
arithmetic with log-linear complexity, \cite{GrasedyckDissertation,GrasedyckHackbusch,Hackbusch}. 
This data-sparse format 
is well suited to represent at high accuracy matrices arising as discretizations of many integral
operators, for example, those appearing in boundary integral equation methods. Also the sparse matrices
that are obtained when discretizing differential operator by means of the finite element method (FEM)
are amenable to a treatment by $\H$-matrices; in fact, they feature a lossless representation. 
Since the $\H$-matrix format
comes with an arithmetic that provides algorithms to invert matrices as well as 
to compute $LU$-factorizations, approximations of the inverses of FEM matrices or their $LU$-factorizations
are available computationally. Immediately, the question of accuracy and/or complexity comes into sight. 
On the one hand, the complexity of the $\H$-matrix inversion can be log-linear if the $\H$-matrix structure
including the block ranks is fixed,
\cite{GrasedyckDissertation,GrasedyckHackbusch,Hackbusch}. 
Then, however, the accuracy of the resulting approximate inverse 
is not completely clear. On the other hand, the accuracy of the inverse can be controlled by means of an 
adaptive arithmetic (going back at least to \cite{GrasedyckDissertation}); the computational cost at which
this error control comes, is problem-dependent and not completely clear. Therefore, 
a fundamental question is how well the inverse can be approximated in a selected $\H$-matrix format, 
irrespective of algorithmic considerations. This question is answered in the present paper for 
FEM matrices arising from the discretization of second order elliptic boundary value problems. 

It was first observed numerically in~\cite{GrasedyckDissertation} that the
inverse of the finite element (FEM) stiffness matrix corresponding to the Dirichlet problem for 
elliptic operators with bounded coefficients can be approximated in the format of $\mathcal{H}$-matrices with an error 
that decays exponentially in the block rank employed . Using properties of the continuous Green's 
function for the Dirichlet problem,~\cite{BebendorfHackbusch} 
proves this exponential decay in the block rank, at least up to the discretization error. The work~\cite{Boerm} 
improves on the result~\cite{BebendorfHackbusch} in several ways, in particular, by proving a corresponding
approximation result in the framework of ${\mathcal H}^2$-matrices; we do not go into the details
of $\H^2$-matrices here and merely mention that $\H^2$-matrices 
are a refinement of the concept of $\H$-matrices with better complexity properties, 
\cite{Giebermann,hackbusch-khoromskij-sauter00,hackbusch-boerm02,BoermBuch}. 

Whereas the analysis of~\cite{BebendorfHackbusch,Boerm} is based on the solution operator on the 
continuous level (i.e., by studying the Green's function), the approach taken in the present article is to work
on the discrete level.  This seemingly technical difference
has several important ramifications: First, the exponential approximability
in the block rank shown here is not limited by the discretization error as in \cite{BebendorfHackbusch,Boerm}.
Second, in contrast 
to \cite{BebendorfHackbusch,Boerm}, where the block rank $r$ and the mesh width $h$ are coupled 
by $r \sim \abs{\log h}$, our estimates are explicit in both $r$ and $h$. Third, a unified treatment of 
a variety of boundary conditions is possible and indeed worked out by us. Fourth, our approach paves the way 
for a similar approximability result for discretizations of boundary integral operators, \cite{FaustmannMelenkPraetorius13}. 
Additionally, we mention
that we also allow here the case of higher order FEM discretizations. 

The last theoretical part of this paper (Section~\ref{sec:hierarchical-LU-decomposition})
shows that the ${\mathcal H}$-matrix format admits ${\mathcal H}$-$LU$-decompositions 
or ${\mathcal H}$-Cholesky factorizations with exponential accuracy in the block rank. 
This is achieved, following \cite{Bebendorf07,chandrasekaran-dewilde-gu-somasuderam10}, 
by exploiting that the off-diagonal blocks of certain Schur complements are low-rank. 
Such an approach is closely related to the concepts of 
hierarchically semiseparable matrices (see, for example, 
\cite{xia13,xia-chandrasekaran-gu-li09,li-gu-wu-xia12} and references therein)
and recursive skeletonization (see \cite{ho-greengard12,greengard-gueyffier-martinsson-rokhlin09}) and their arithmetic. 
In fact, several multilevel ``direct'' solvers for PDE discretizations have been proposed 
in the recent past, \cite{ho-ying13,gillman-martinsson13,schmitz-ying12,martinsson09}. 
These solvers take the form of (approximate) matrix factorizations. 
A key ingredient to their efficiency is that certain Schur complement blocks are compressible 
since they are low-rank. Thus, our analysis 
in Section~\ref{sec:hierarchical-LU-decomposition} could also be of value for the understanding of these 
algorithms. We close by stressing that our analysis in 
Section~\ref{sec:hierarchical-LU-decomposition} of ${\mathcal H}$-$LU$-decompositions makes
very few assumptions on the actual ordering of the unknowns and does not
explore beneficial features of special orderings. It is well-known 
in the context of classical direct solvers that the ordering of the unknowns has a tremendous
impact on the fill-in in factorizations. One of the most successful techniques for 
discretizations of PDEs are multilevel nested dissection strategies, which permit to 
identify large matrix blocks that will not be filled during the factorization. 
An in-depth complexity analysis for the ${\mathcal H}$-matrix arithmetic for such 
ordering strategies can be found in \cite{GrasedyckKriemannLeBorne}. The recent works 
\cite{ho-ying13,gillman-martinsson13} and, 
in a slightly different context, \cite{bennighof-lehoucq04}, owe at least parts of their 
efficiency to the use of nested dissection techniques. 

 
\section{Main results}
Let $\Omega \subset \mathbb{R}^{d}$, $d \in \{2,3\}$, be a bounded polygonal (for $d = 2$) or polyhedral (for $d=  3$)
Lipschitz domain with boundary $\Gamma:=\partial\Omega$. 
We consider differential operators of the form
\been\label{eq:ellipticoperatorLO}
Lu := - \div(\mathbf{C}\nabla u)+ \mathbf{b}\cdot \nabla u + \beta u, 
\een
where $\mathbf{b} \in L^\infty(\Omega ; \R^{d}),\beta \in L^{\infty}(\Omega)$,
and $\mathbf{C} \in L^\infty(\Omega ; \R^{d\times d})$ is pointwise symmetric with 
\been
c_1\norm{y}_2^2 \leq  \skp{\mathbf{C}(x)y,y}_2 \leq c_2 \norm{y}_2^2 \quad \forall y \in \R^d,
\een
with certain constants $c_1, c_2 >0$. 

For $f\in L^2(\Omega)$, we consider the mixed boundary value problem
\begin{subequations}\label{eq:modelstrong}
\begin{align}
Lu &= f \quad \text{in} \; \Omega, \\
u &= 0 \quad \text{on} \; \Gamma_D,  \\
\mathbf{C}\nabla u \cdot n &= 0 \quad \text{on} \; \Gamma_N, \\
\mathbf{C}\nabla u \cdot n + \alpha u &= 0 \quad \text{on} \; \Gamma_{\mathcal{R}},
\end{align}
\end{subequations}
where $n$ denotes the outer normal vector to the surface $\Gamma$, $\alpha \in L^{\infty}(\Gamma_{\mathcal{R}})$, $\alpha >0 $ and 
$\Gamma = \ol{\Gamma_D} \; \cup \; \ol{\Gamma_N} \; \cup \; \ol{\Gamma_{\mathcal{R}}}$, 
with the pairwise disjoint and relatively open
subsets $\Gamma_D,\Gamma_N,\Gamma_{\mathcal{R}}$. 
With the trace operator $ \gamma_0^{\text{int}}$ we define $H^1_0(\Omega,\Gamma_D):= \{u\in H^1(\Omega): \gamma_0^{\text{int}}u = 0 \;\text{on}\; \Gamma_D\}$.
The bilinear form $a: H^1_0(\Omega,\Gamma_D) \times H^1_0(\Omega,\Gamma_D) \ra \R$ corresponding to \eqref{eq:modelstrong}
 is given by 
\been\label{eq:variation}
a(u,v) := \skp{\mathbf{C}\nabla u, \nabla v}_{L^2(\Omega)} + \skp{\mathbf{b}\cdot \nabla u+\beta u, v}_{L^2(\Omega)} + \skp{\alpha u, v}_{L^2(\Gamma_{\mathcal{R}})}.
\een
We additionally assume that the coefficients  $\alpha,\mathbf{C},\mathbf{b},\beta $ are such that the
the coercivity 
\been\label{eq:coercivity}
\norm{u}_{H^1(\Omega)}^2\leq C a(u,u)
\een
of the bilinear form $a(\cdot,\cdot)$ holds. Then, the Lax-Milgram Lemma implies the unique solvability of the weak
formulation of our model problem. \\

For the discretization, we assume that $\Omega$ is triangulated by 
a {\em quasiuniform} mesh ${\mathcal T}_h=\{T_1,\dots,T_N\}$ of mesh width 
$h := \max_{T_j\in \mathcal{T}_h}{\rm diam}(T_j)$, and the Dirichlet $\Gamma_D$, Neumann $\Gamma_N$, 
and Robin $\Gamma_{\mathcal{R}}$-parts of the boundary are resolved by the mesh $\T_h$.
The elements $T_j \in \mathcal{T}_h$ are 
triangles ($d=2$) or tetrahedra ($d =3$), and we assume that $\mathcal{T}_h$ is regular in the sense of Ciarlet. 
The nodes 
are denoted by $x_i \in \mathcal{N}_h$, for $i=1,\ldots,N$. 
Moreover, the mesh $\mathcal{T}_h$ is assumed to be $\gamma$-shape regular in the sense 
of $ h \sim {\rm diam}(T_j) \le \gamma\,|T_j|^{1/d}$ for all $T_j\in\mathcal{T}_h$.
In the following, the notation $\lesssim$ abbreviates $\leq$ up to a 
constant $C>0$ which depends only on $\Omega$, the dimension $d$, and 
$\gamma$-shape regularity of $\mathcal{T}_h$. 
Moreover, we use $\simeq$ to abbreviate that both estimates
$\lesssim$ and $\gtrsim$ hold.\\

We consider the Galerkin discretization of the bilinear form 
$a(\cdot,\cdot)$ by continuous, piecewise polynomials of fixed degree $p \geq 1$ in 
$S^{p,1}_0({\mathcal T}_h,\Gamma_D) := S^{p,1}({\mathcal T}_h) \cap H^1_0(\Omega,\Gamma_D)$ with 
$S^{p,1}({\mathcal T}_h) = \{u \in C(\Omega)\,:\, u|_{T_j} \in \mathcal{P}_p, \, \forall \, T_j \in \mathcal{T}_h\}$.
We choose a basis of $S^{p,1}_0({\mathcal T}_h,\Gamma_D)$, which is denoted by
 ${\mathcal B}_h:= \{\psi_j\, :\, j = 1,\dots, N\}$. 
Given that our results are formulated for matrices, assumptions on the basis ${\mathcal B}_h$ 
need to be imposed. For the 
isomorphism $\mathcal{J}:\R^N\ra S^{p,1}_0({\mathcal T}_h,\Gamma_D)$, $\mathbf{x} \mapsto \sum_{j=1}^Nx_j\psi_j$, 
we require 
\been\label{eq:basis}
h^{d/2}\norm{\mathbf{x}}_2 \lesssim \norm{\mathcal{J}\mathbf{x}}_{L^2(\Omega)} \lesssim h^{d/2}\norm{\mathbf{x}}_2,
\quad \forall\, \mathbf{x} \in \R^d. 
\een

\begin{remark}
Standard bases for $p=1$ are the classical hat functions satisfying $\psi_j(x_i) = \delta_{ij}$ and for $p\geq 2$ 
we refer to, e.g., \cite{SchwabBuch,karniadakis-sherwin99,demkowicz-kurtz-pardo-paszynski-rachowicz-zdunek08}.
\end{remark}

The Galerkin discretization of \eqref{eq:variation} results in a positive definite matrix 
$\mathbf A\in\mathbb R^{N\times N}$ 
with
\begin{equation*}
 \mathbf A_{jk} = \skp{\mathbf{C} \nabla \psi_k,\nabla \psi_j}_{L^2(\Omega)} + \skp{\mathbf{b}\cdot \nabla \psi_k+\beta\psi_k,\psi_j }_{L^2(\Omega)}
+ \skp{\alpha \psi_k,\psi_j}_{L^2(\Gamma_{\mathcal{R}})}, \quad \psi_k,\psi_j \in {\mathcal B}_h. 
\end{equation*}

Our goal is to derive an $\mathcal{H}$-matrix approximation $\mathbf{B}_{\mathcal{H}}$ of the inverse matrix $\mathbf{B} =\mathbf{A}^{-1}$.
An ${\mathcal H}$-matrix $\mathbf{B}_{\mathcal{H}}$ is a blockwise low rank matrix based on the concept of ``admissibility'', 
which we now introduce: 

\begin{definition}[bounding boxes and $\eta$-admissibility]
\label{def:admissibility}
A \emph{cluster} $\tau$ is a subset of the index set $\mathcal{I} = \{1,\ldots,N\}$. For a cluster $\tau \subset \mathcal{I}$, 
we say that $B_{R_{\tau}} \subset \mathbb{R}^d$ is a \emph{bounding box} if: 
\begin{enumerate}[(i)]
 \item 
\label{item:def:admissibility-i}
$B_{R_{\tau}}$ is a hyper cube with side length $R_{\tau}$,
 \item 
\label{item:def:admissibility-ii}
$ \supp \psi_j \subset B_{R_{\tau}}$ for all $ j \in \tau$.
\end{enumerate}

For $\eta > 0$, a pair of clusters $(\tau,\sigma)$ with $\tau,\sigma \subset \mathcal{I}$ is $\eta$-\emph{admissible}, if 
there exist boxes $B_{R_{\tau}}$, $B_{R_{\sigma}}$ satisfying 
(\ref{item:def:admissibility-i})--(\ref{item:def:admissibility-ii})
such that
\begin{equation}\label{eq:admissibility}
\max\{{\rm diam}B_{R_{\tau}},{\rm diam}B_{R_{\sigma}}\} \leq  \eta \; {\rm dist}(B_{R_{\tau}},B_{R_{\sigma}}).
\end{equation}
\end{definition}

\begin{definition}[blockwise rank-$r$ matrices]
Let $P$ be a partition of ${\mathcal I} \times {\mathcal I}$ and $\eta >0$. A matrix ${\mathbf B}_{\mathcal{H}} \in \mathbb{R}^{N \times N}$ 
is said to be a \emph{blockwise rank-$r$ matrix}, if for every $\eta$-admissible cluster pair $(\tau,\sigma) \in P$, 
the block ${\mathbf B}_{\mathcal{H}}|_{\tau \times \sigma}$ is a rank-$r$ matrix, i.e., it has the form 
${\mathbf B}_{\mathcal{H}}|_{\tau \times \sigma} = {\mathbf X}_{\tau \sigma} {\mathbf Y}^T_{\tau \sigma}$ with 
$\mathbf{X}_{\tau\sigma} \in \mathbb{R}^{\abs{\tau}\times r}$ 
and $\mathbf{Y}_{\tau\sigma} \in \mathbb{R}^{\abs{\sigma}\times r}$.
Here and below, $\abs{\sigma}$ denotes the cardinality of a finite set $\sigma$.
\end{definition}

The following theorems are the main results of this paper. Theorem~\ref{th:blockapprox} shows that admissible blocks
can be approximated by rank-$r$ matrices:

\begin{theorem}\label{th:blockapprox}
Fix $\eta > 0$, $q\in (0,1)$. Let the cluster pair $(\tau,\sigma)$ be $\eta$-admissible. Then, for $k \in \mathbb{N}$ there are matrices
$\mathbf{X}_{\tau\sigma} \in \mathbb{R}^{\abs{\tau}\times r}$, $\mathbf{Y}_{\tau\sigma} \in \mathbb{R}^{\abs{\sigma}\times r}$ of rank $r \leq C_{\rm dim} (2+\eta)^dq^{-d}k^{d+1}$
such that
\begin{equation}
\norm{\mathbf{A}^{-1}|_{\tau \times \sigma} - \mathbf{X}_{\tau\sigma}\mathbf{Y}_{\tau\sigma}^T}_2 \leq 
C_{\rm apx} N q^k.
\end{equation}
The constants $C_{\rm apx},C_{\rm dim}>0$ depend only on the boundary value problem \eqref{eq:modelstrong}, 
$\Omega$, $d$, $p$, and the $\gamma$-shape regularity of $\mathcal{T}_h$.
\end{theorem}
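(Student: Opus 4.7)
The plan is to reduce the matrix block approximation to the approximation of a space of discrete $a$-harmonic FE functions and then iterate a single-step approximation lemma on nested concentric boxes to obtain exponential decay. More precisely, I would first reinterpret columns of $\mathbf{A}^{-1}|_{\tau\times\sigma}$ via the isomorphism $\mathcal{J}$: for any $\mathbf{b}\in\mathbb{R}^N$ with $\supp\mathbf{b}\subset\sigma$, the vector $\mathbf{A}^{-1}\mathbf{b}$ is the coefficient vector of a Galerkin solution $u_h\in S^{p,1}_0(\mathcal{T}_h,\Gamma_D)$ of $a(u_h,v_h)=\mathbf{b}^T\mathcal{J}^{-1}v_h$. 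Since the right-hand side vanishes whenever $v_h$ is supported away from $B_{R_\sigma}$, $u_h$ is discretely $a$-harmonic on a neighborhood $D$ containing $B_{R_\tau}$, and the $\eta$-admissibility \eqref{eq:admissibility} ensures that $D$ can be chosen with thickness comparable to $\diam(B_{R_\tau})$ while remaining disjoint from $B_{R_\sigma}$. The image of the map $\mathbf{b}\mapsto u_h|_{\text{nodes in }\tau}$ is the column space we wish to approximate.

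The heart of the argument will be a single-step approximation lemma of the following type: if $u_h$ is discretely $a$-harmonic on a box $D'$, then on a concentric subbox $D''\subset D'$ with $\dist(D'',\partial D')\gtrsim \rho$, there exists a subspace $W\subset S^{p,1}(\mathcal{T}_h|_{D''})$ of dimension $\lesssim k^d$ such that
\begin{equation*}
\inf_{w\in W}\norm{u_h-w}_{L^2(D'')}\;\leq\;q\,\norm{u_h}_{L^2(D')}
\end{equation*}
for a fixed $q\in(0,1)$. The construction of $W$ is by a Scott-Zhang-type projection onto a coarser grid of mesh size $H\sim\rho/k$ inside $D''$; the key estimate is a discrete Caccioppoli inequality that controls $\norm{\nabla u_h}_{L^2(D'')}$ by $\rho^{-1}\norm{u_h}_{L^2(D')}$, which, combined with standard approximation on the coarse grid, delivers the prefactor $q<1$. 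Iterating this over $k$ nested shells fitting inside the admissibility gap and collecting dimensions yields a space of rank $r\lesssim(2+\eta)^dq^{-d}k^{d+1}$ with $L^2$-error of order $q^k\norm{u_h}_{L^2(D)}$. Translating back to the $\ell^2$ norm via \eqref{eq:basis} and taking suprema over columns gives the block bound, with the stated factor $N$ absorbing the transitions between $L^2$ and $\ell^2$ norms and the column count.

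I expect the main technical obstacle to be the discrete Caccioppoli inequality together with the proof that the Scott-Zhang coarse-grid projection preserves discrete $a$-harmonicity up to an $O(q)$ error. At the continuous level, one would simply use interior regularity of $a$-harmonic functions and polynomial approximation (as in \cite{BebendorfHackbusch,Boerm}), but the FE space does not admit such smoothness. The remedy is to introduce a cutoff $\eta_h$ adapted to the mesh, test against $\eta_h^2 u_h$, and carefully estimate the commutator terms that arise because $\eta_h^2 u_h\notin S^{p,1}_0(\mathcal{T}_h,\Gamma_D)$. This is also the step where the nonsymmetric terms $\mathbf{b}\cdot\nabla u$, $\beta u$ and the Robin term $\skp{\alpha u,v}_{L^2(\Gamma_\mathcal{R})}$ enter: they produce lower-order contributions that are absorbed by the coercivity \eqref{eq:coercivity} provided the shell width $\rho$ is large compared to $h$. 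This is precisely where working on the discrete level pays off, as the mixed Dirichlet-Neumann-Robin boundary conditions are treated uniformly simply by using test functions in $S^{p,1}_0(\mathcal{T}_h,\Gamma_D)$ throughout, without ever invoking a continuous Green's function.
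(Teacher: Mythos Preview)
Your proposal is correct and follows essentially the same route as the paper: a discrete Caccioppoli inequality (Lemma~\ref{lem:Caccioppoli}), a one-step coarse-grid Scott--Zhang approximation (Lemma~\ref{lem:lowdimapp}), iteration over $k$ nested concentric boxes (Lemma~\ref{cor:lowdimapp}), and transfer to the matrix level via the basis isomorphism, exactly the content of Proposition~\ref{thm:function-approximation} and the proof in Section~\ref{sec:H-matrix-approximation}. One minor point: the factor $N$ arises solely from applying the norm equivalence \eqref{eq:basis} twice (once for the solution, once for the data), giving $h^{-d}\sim N$, not from any column count.
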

The approximations for the individual blocks can be combined to gauge the approximability of ${\mathbf A}^{-1}$
by blockwise rank-$r$ matrices. Particularly satisfactory estimates are obtained if the blockwise rank-$r$ matrices
have additional structure. To that end, we introduce the following definitions. 

\begin{definition}[cluster tree]
A \emph{cluster tree} with \emph{leaf size} $n_{\rm leaf} \in \mathbb{N}$ is a binary tree $\mathbb{T}_{\mathcal{I}}$ with root $\mathcal{I}$  
such that for each cluster $\tau \in \mathbb{T}_{\mathcal{I}}$ the following dichotomy holds: either $\tau$ is a leaf of the tree and 
$\abs{\tau} \leq n_{\rm leaf}$, or there exist so called sons $\tau'$, $\tau'' \in \mathbb{T}_{\mathcal{I}}$, which are disjoint subsets of $\tau$ with 
$\tau = \tau' \cup \tau''$. The \emph{level function} ${\rm level}: \mathbb{T}_{\mathcal{I}} \rightarrow \mathbb{N}_0$ is inductively defined by 
${\rm level}(\mathcal{I}) = 0$ and ${\rm level}(\tau') := {\rm level}(\tau) + 1$ for $\tau'$ a son of $\tau$. The \emph{depth} of a cluster tree
is ${\rm depth}(\mathbb{T}_{\mathcal{I}}) := \max_{\tau \in \mathbb{T}_{\mathcal{I}}}{\rm level}(\tau)$.  
\end{definition}

\begin{definition}[far field, near field, and sparsity constant]
 A partition $P$ of $\mathcal{I} \times \mathcal{I}$ is said to be based on the cluster tree $\mathbb{T}_{\mathcal{I}}$, 
if $P \subset \mathbb{T}_{\mathcal{I}}\times\mathbb{T}_{\mathcal{I}}$. For such a partition $P$ and fixed $\eta > 0$, we define the \emph{far field} and the \emph{near field} 
as 
\begin{equation*}
P_{\rm far} := \{(\tau,\sigma) \in P \; : \; (\tau,\sigma) \; \text{is $\eta$-admissible}\}, \quad P_{\rm near} := P\backslash P_{\rm far}.
\end{equation*}
The \emph{sparsity constant} $C_{\rm sp}$, introduced in \cite{GrasedyckDissertation}, of such a partition is defined by
\begin{equation*}
C_{\rm sp} := \max\left\{\max_{\tau \in \mathbb{T}_{\mathcal{I}}}\abs{\{\sigma \in \mathbb{T}_{\mathcal{I}} \, : \, \tau \times \sigma \in P_{\rm far}\}},\max_{\sigma \in \mathbb{T}_{\mathcal{I}}}\abs{\{\tau \in \mathbb{T}_{\mathcal{I}} \, : \, \tau \times \sigma \in P_{\rm far}\}}\right\}.
\end{equation*}
\end{definition}

The following Theorem~\ref{th:Happrox} shows that the matrix
$\mathbf{A}^{-1}$ can be approximated by blockwise rank-$r$ matrices at an exponential rate in the 
block rank $r$: 

\begin{theorem}\label{th:Happrox}
Fix $\eta > 0$. Let a partition $P$ of ${\mathcal{I}} \times {\mathcal{I}}$ be based on a cluster tree 
$\mathbb{T}_{\mathcal{I}}$. Then, there is a 
blockwise rank-$r$ matrix $\mathbf{B}_{\mathcal{H}}$ such that 
\begin{equation}
\norm{\mathbf{A}^{-1} - \mathbf{B}_{\mathcal{H}}}_2 \leq C_{\rm apx} C_{\rm sp} N {\rm depth}(\mathbb{T}_{\mathcal{I}}) e^{-br^{1/(d+1)}}.
\end{equation}
The constants $C_{\rm apx},b>0$ depend only on the boundary value problem \eqref{eq:modelstrong}, 
$\Omega$, $d$, $p$, and the $\gamma$-shape regularity of $\mathcal{T}_h$.
\end{theorem}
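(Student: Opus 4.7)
The plan is to construct $\mathbf{B}_{\mathcal{H}}$ blockwise along the given partition $P$. For each admissible pair $(\tau,\sigma) \in P_{\rm far}$, take as $\mathbf{B}_{\mathcal{H}}|_{\tau \times \sigma}$ the rank-$r$ approximation $\mathbf{X}_{\tau\sigma}\mathbf{Y}_{\tau\sigma}^T$ furnished by Theorem~\ref{th:blockapprox}; for each inadmissible pair $(\tau,\sigma) \in P_{\rm near}$, set $\mathbf{B}_{\mathcal{H}}|_{\tau \times \sigma} = \mathbf{A}^{-1}|_{\tau \times \sigma}$. The resulting matrix is a blockwise rank-$r$ matrix in the sense of the above definition (which only constrains admissible blocks), and the error $\mathbf{E} := \mathbf{A}^{-1} - \mathbf{B}_{\mathcal{H}}$ is supported entirely on the far-field blocks, where
\bee
\norm{\mathbf{E}|_{\tau \times \sigma}}_2 \leq C_{\rm apx} N q^k, \qquad r \leq C_{\rm dim}(2+\eta)^d q^{-d} k^{d+1}.
\ee

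Next, the blockwise estimate must be lifted to a spectral-norm estimate for the global matrix $\mathbf{E}$. The standard device is to split $\mathbf{E} = \sum_{\ell=0}^{\depth(\mathbb{T}_{\mathcal{I}})} \mathbf{E}_\ell$, grouping the admissible blocks according to the level of the row cluster. For each fixed level $\ell$, the sparsity constant $C_{\rm sp}$ simultaneously bounds the number of admissible column clusters attached to any row cluster and the number of admissible row clusters attached to any column cluster. A direct Cauchy--Schwarz argument (as in \cite{GrasedyckHackbusch}) then gives
\bee
\norm{\mathbf{E}_\ell}_2 \leq C_{\rm sp} \max_{(\tau,\sigma) \in P_{\rm far}}\norm{\mathbf{E}|_{\tau\times\sigma}}_2 \leq C_{\rm sp}\, C_{\rm apx} N q^k,
\ee
and summing over the $\depth(\mathbb{T}_{\mathcal{I}}) + 1$ levels produces the factor $\depth(\mathbb{T}_{\mathcal{I}})$ that appears in the statement.

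The last step converts the exponential rate in $k$ into an exponential rate in $r^{1/(d+1)}$: given a target rank $r$, pick the largest $k\in\N$ satisfying $C_{\rm dim}(2+\eta)^d q^{-d} k^{d+1} \leq r$, so that $k \simeq (r/((2+\eta)^d q^{-d}))^{1/(d+1)}$ and hence $q^k \leq e^{-b\, r^{1/(d+1)}}$ for some $b>0$ depending only on $\eta$, $d$, and the (arbitrary but fixed) choice $q \in (0,1)$; all remaining constants are absorbed into $C_{\rm apx}$. I do not expect any genuine obstacle in this proof: all the hard analytic work is performed by Theorem~\ref{th:blockapprox}, and the derivation of Theorem~\ref{th:Happrox} from it amounts to a standard block-diagonal decomposition together with the Cauchy--Schwarz bookkeeping driven by $C_{\rm sp}$ and $\depth(\mathbb{T}_{\mathcal{I}})$.
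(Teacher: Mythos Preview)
Your proposal is correct and follows the same approach as the paper. The paper packages your level-wise decomposition and Cauchy--Schwarz bookkeeping into a single cited lemma (the bound $\norm{\mathbf{M}}_2 \leq C_{\rm sp}\sum_{\ell\ge 0}\max\{\norm{\mathbf{M}|_{\tau\times\sigma}}_2 : (\tau,\sigma)\in P,\ \level(\tau)=\ell\}$ from \cite{GrasedyckDissertation,Hackbusch}) and then converts $q^k$ to $e^{-br^{1/(d+1)}}$ exactly as you describe.
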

\begin{remark}
Typical clustering strategies such as the ``geometric clustering'' described in \cite{Hackbusch} and applied to 
quasiuniform meshes with $\mathcal{O}(N)$ elements lead to fairly balanced cluster trees $\mathbb{T}_{\mathcal{I}}$ 
of depth $\mathcal{O}(\log N)$ and feature a sparsity constant $C_{\rm sp}$ that is bounded uniformly in $N$.
We refer to~\cite{Hackbusch} for the fact that the
memory requirement to store $\mathbf{B}_{\mathcal{H}}$ is $\mathcal{O}\big((r+n_{\rm leaf}) N \log N\big)$.
\end{remark}

\begin{remark}
With the estimate 
$\frac{1}{\norm{\mathbf{A}^{-1}}_2} \lesssim N^{-1}$ from 
\cite[Theorem 2]{ern-guermond}, 
we get a bound for the relative error
\begin{equation}
\frac{\norm{\mathbf{A}^{-1} - \mathbf{B}_{\mathcal{H}}}_2}{\norm{\mathbf{A}^{-1}}_2}
\lesssim C_{\rm apx} C_{\rm sp}{\rm depth}(\mathbb{T}_{\mathcal{I}}) e^{-br^{1/(d+1)}}.
\end{equation}
\end{remark}

Let us conclude this section with an observation concerning the admissibility
condition \eqref{eq:admissibility}. 
If the operator $L$ is symmetric, i.e. $\mathbf{b}=\mathbf{0}$, then the admissibility condition \eqref{eq:admissibility} 
 can be replaced by the weaker admissibility condition
\been\label{eq:minadmissible}
\min\{{\rm diam}B_{R_{\tau}},{\rm diam}B_{R_{\sigma}}\} \leq  \eta \; {\rm dist}(B_{R_{\tau}},B_{R_{\sigma}}).
\een
This follows from the fact that Proposition~\ref{thm:function-approximation}
only needs an admissibility criterion of the form 
${\rm diam}B_{R_{\tau}} \leq \eta \, {\rm dist}(B_{R_{\tau}},B_{R_{\sigma}})$.
Due to the symmetry of $L$, deriving a block approximation for the block $\tau\times\sigma$
is equivalent to deriving an approximation for the block $\sigma\times\tau$. 
Therefore, we can interchange roles of the boxes $B_{R_{\tau}}$ and $B_{R_{\sigma}}$, 
and as a consequence the
weaker admissibility condition \eqref{eq:minadmissible} is sufficient. We summarize this observation 
in the following corollary.

\bcor
In the symmetric case $\mathbf{b} = \mathbf{0}$, the results from Theorem~\ref{th:blockapprox} and 
Theorem~\ref{th:Happrox} hold verbatim with the weaker admissibility criterion \eqref{eq:minadmissible} instead
of \eqref{eq:admissibility}.
\ecor

\section{Low-dimensional approximation of the Galerkin solution on admissible blocks}
\label{sec:Approximation-solution}

In terms of functions and function spaces, the question of approximating the matrix block
$\mathbf{A}^{-1}|_{\tau\times\sigma}$ by a low-rank factorization $\mathbf{X}_{\tau\sigma}\mathbf{Y}_{\tau\sigma}^T$
 can be rephrased as one of how well one can approximate locally the solution of 
certain variational problems. More precisely, we consider, 
for data $f$ supported by $B_{R_\sigma}\cap\Omega$, 
 the problem to find 
 $\phi_h \in S^{p,1}_0({\mathcal T}_h,\Gamma_D)$ such that
\been \label{eq:model}
a(\phi_h,\psi_h) 
 =\langle f,\psi_h\rangle_{L^2(\Omega)}, 
\qquad 
\forall \psi_h \in S^{p,1}_0({\mathcal T}_h,\Gamma_D) . 
\een
We remark in passing that existence and uniqueness of
$\phi_h$ follow from coercivity of $a(\cdot,\cdot)$. 
The question of approximating the matrix block 
$\mathbf{A}^{-1}|_{\tau\times\sigma}$ by a low-rank factorization 
is intimately linked to the question of approximating $\phi_h|_{B_{R_\tau}\cap\Omega}$ 
from low-dimensional spaces. The latter problem is settled in the affirmative 
in the following proposition for $\eta$-admissible cluster pairs $(\tau,\sigma)$: 

\begin{proposition}\label{thm:function-approximation}
Let $(\tau,\sigma)$ be a cluster pair with bounding boxes $B_{R_\tau}$, $B_{R_\sigma}$. 
Assume $ \eta \dist(B_{R_\tau},B_{R_\sigma}) \geq \diam(B_{R_\tau})$ for some $\eta > 0$. 
Fix $q \in (0,1)$. 
Let $\Pi^{L^2}: L^2(\Omega) \ra S^{p,1}_0(\T_h,\Gamma_D)$ be the $L^2(\Omega)$-orthogonal projection.
Then, for each $k\in\mathbb{N}$ there exists a space
$V_k\subset S^{p,1}_0({\mathcal T}_h,\Gamma_D)$ with $\dim V_k\leq C_{\rm dim} (2+\eta)^d q^{-d}k^{d+1}$ 
such that for arbitrary $f \in L^{2}(\Omega)$ with 
$\supp  f  \subset B_{R_\sigma}\cap\Omega$, the solution $\phi_h$ of \eqref{eq:model} 
satisfies 
\begin{equation}
\label{eq:thm:function-approximation-1}
\min_{v \in V_k} \|\phi_h - v\|_{L^2(B_{R_{\tau}}\cap\Omega)} 
\leq C_{\rm box} q^k \|\Pi^{L^2}f\|_{L^2(\Omega)}
\leq C_{\rm box} q^k \|f\|_{L^2(B_{R_\sigma}\cap\Omega)}.
\end{equation}
The constant $C_{\rm box}>0$ depends only on the boundary value problem \eqref{eq:modelstrong} and $\Omega$, 
while $C_{\rm dim}>0$ additionally depends on $p$, $d$, and the $\gamma$-shape regularity of $\mathcal{T}_h$.
\end{proposition}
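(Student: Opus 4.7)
The plan is to exploit the fact that since $\supp  f  \subset B_{R_\sigma}\cap\Omega$ and admissibility gives $\dist(B_{R_\tau},B_{R_\sigma}) \geq \diam(B_{R_\tau})/\eta > 0$, the Galerkin solution $\phi_h$ is \emph{discretely $a$-harmonic} away from the source: for every $\psi_h \in S^{p,1}_0(\T_h,\Gamma_D)$ whose support avoids $B_{R_\sigma}\cap\Omega$, \eqref{eq:model} yields $a(\phi_h,\psi_h) = 0$. I would then approximate $\phi_h$ on $B_{R_\tau}\cap\Omega$ by iterating a single-step low-dimensional approximation result for such discretely harmonic functions over a geometric sequence of concentric enlargements.

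Concretely, I would fix $\delta := \dist(B_{R_\tau},B_{R_\sigma})$ and construct $k+1$ concentric hypercubes $B_{R_\tau} = B^{(k)} \subset B^{(k-1)} \subset \cdots \subset B^{(0)}$ whose side lengths grow from $R_\tau$ to $R_\tau + \delta$ in equal increments of width $\simeq \delta/k$. Admissibility ensures $R_\tau + \delta \lesssim (1+\eta)\,R_\tau$ (up to a factor depending on $d$), which is the origin of the factor $(2+\eta)^d$ in the final dimension bound, while $B^{(0)} \cap B_{R_\sigma} = \emptyset$ so that $\phi_h$ is discretely $a$-harmonic on $B^{(0)}\cap\Omega$.

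The key technical ingredient, and the step I expect to be the main obstacle, is a \emph{single-step approximation lemma}: for concentric boxes $B \subset B'$ with $\dist(B,\partial B') =: d_0 > 0$ and every $u_h \in S^{p,1}_0(\T_h,\Gamma_D)$ that is discretely $a$-harmonic on $B'\cap\Omega$, there exists a subspace $W(B,B') \subset S^{p,1}_0(\T_h,\Gamma_D)$ of dimension $\lesssim q^{-d}(\diam B'/d_0)^d$ with $\min_{w\in W}\|u_h - w\|_{L^2(B\cap\Omega)} \leq q\,\|u_h\|_{L^2(B'\cap\Omega)}$. Proving this on the \emph{discrete} level (avoiding the Green's function of \cite{BebendorfHackbusch,Boerm}) requires a discrete Caccioppoli-type estimate of the form $\|\nabla u_h\|_{L^2(B)} \lesssim d_0^{-1}\|u_h\|_{L^2(B')}$, which is established by testing \eqref{eq:model} with $I_h(\chi^2 u_h)$ for a smooth cutoff $\chi$ and a Scott--Zhang-like quasi-interpolant $I_h$; the commutator terms between $I_h$ and the product are the delicate part and are controlled by $\gamma$-shape regularity and the interpolation error on patches. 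Combining this Caccioppoli estimate with approximation of $u_h$ from a piecewise polynomial space on a coarser grid of scale $\simeq q\, d_0$ (tuned so that the interpolation error on $B$ absorbs a factor $q$ from the Caccioppoli constant) yields the claimed contraction; note that the FEM space $S^{p,1}_0(\T_h,\Gamma_D)$ on $B$ itself provides a finite-dimensional universe in which the coarse-scale approximant lives.

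Given the single-step lemma, the iteration is standard. I set $u_h^{(0)} := \phi_h$, and for $j = 1, \ldots, k$ choose $w_j \in W(B^{(j)},B^{(j-1)})$ that approximates $u_h^{(j-1)}$ on $B^{(j)}\cap\Omega$ with contraction $q$. The residual $u_h^{(j)} := u_h^{(j-1)} - w_j$ is again discretely $a$-harmonic on $B^{(j)}\cap\Omega$ (the approximant $w_j$ being merely subtracted inside the harmonic region after a Galerkin correction absorbed into $W(B^{(j)},B^{(j-1)})$), so the lemma applies again. Setting $V_k := \sum_{j=1}^k W(B^{(j)},B^{(j-1)})$ gives $\dim V_k \lesssim k \cdot q^{-d}(R^{(0)}/\delta \cdot k)^d \lesssim (2+\eta)^d q^{-d} k^{d+1}$, while telescoping yields $\|\phi_h - \sum_j w_j\|_{L^2(B_{R_\tau}\cap\Omega)} \leq q^k\|\phi_h\|_{L^2(B^{(0)}\cap\Omega)}$. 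Finally, since $\langle f,\psi_h\rangle = \langle \Pi^{L^2}f,\psi_h\rangle$ for $\psi_h\in S^{p,1}_0(\T_h,\Gamma_D)$, coercivity \eqref{eq:coercivity} gives $\|\phi_h\|_{L^2(\Omega)} \leq \|\phi_h\|_{H^1(\Omega)} \lesssim \|\Pi^{L^2}f\|_{L^2(\Omega)} \leq \|f\|_{L^2(B_{R_\sigma}\cap\Omega)}$, completing \eqref{eq:thm:function-approximation-1}.
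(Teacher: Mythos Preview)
Your proposal is correct and follows essentially the same route as the paper: orthogonality \eqref{eq:ortho} gives membership in the space $\mathcal{H}_h$ of discretely $a$-harmonic functions, a discrete Caccioppoli inequality (Lemma~\ref{lem:Caccioppoli}, tested with the Scott--Zhang interpolant of $\eta^2 u$) feeds a single-step contraction via coarse-grid approximation (Lemma~\ref{lem:lowdimapp}), and this is iterated over nested concentric boxes (Lemma~\ref{cor:lowdimapp}). Two minor points to tighten: admissibility yields $R_\tau \lesssim \eta\,\delta$, so the relevant bound is $R_\tau+\delta \lesssim (1+\eta)\,\delta$ (not $(1+\eta)R_\tau$), which is what the dimension count actually uses; and preserving discrete harmonicity of the residual requires projecting the coarse-grid approximant back onto $\mathcal{H}_h(B^{(j)},\Omega)$---the paper does this with the orthogonal projection $\Pi_{h,R}$, which your ``Galerkin correction absorbed into $W$'' correctly anticipates but should be made explicit.
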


The proof of Proposition~\ref{thm:function-approximation} will be given at the end 
of this section. The basic steps are as follows: First, one observes that 
$\supp f \subset B_{R_{\sigma}}\cap \Omega$ together with the admissibility condition 
${\rm dist}(B_{R_{\tau}},B_{R_{\sigma}})\geq \eta^{-1}{\rm diam}(B_{R_{\tau}}) > 0$ imply the orthogonality condition
\been\label{eq:ortho}
a(\phi_h,\psi_h)= \skp{f,\psi_{h}}_{L^2(B_{R_{\sigma}}\cap \Omega) } = 0, \quad
  \forall \psi_h \in S^{p,1}_0({\mathcal T}_h,\Gamma_D) \,\text{with} \,\supp \psi_h \! \subset \! B_{R_{\tau}} \cap \Omega.
\een
Second, this observation will allow us to 
prove a Caccioppoli-type estimate (Lemma~\ref{lem:Caccioppoli}) in which stronger norms of $\phi_h$
are estimated by weaker norms of $\phi_h$ on slightly enlarged regions. Third, we proceed 
as in \cite{BebendorfHackbusch,Boerm} by iterating an approximation result (Lemma~\ref{lem:lowdimapp})
 derived from the Scott-Zhang interpolation of the Galerkin solution $\phi_h$.
This iteration argument accounts for the exponential convergence (Lemma~\ref{cor:lowdimapp}). \newline

\subsection{The space ${\mathcal H}_h(D,\omega)$ and a Caccioppoli type estimate}
It will 
be convenient to introduce, for $\rho \subset \mathcal{I}$, the set  
\been\label{eq:subset}
\omega_{\rho} := {\rm interior}\left(\bigcup_{j\in\rho}\supp \psi_j\right) \subseteq \Omega; 
\een
we will implicitly assume henceforth that such sets are unions of elements. 
Let $D\subset \R^d$ be a bounded open set and $\omega\subset \Omega$ be of the form 
given in (\ref{eq:subset}). 
The orthogonality property that we have identified 
in \eqref{eq:ortho} is captured by the following space ${\mathcal H}_{h}(D,\omega)$: 
\begin{eqnarray}\label{eq:approxspace}
\mathcal{H}_h(D,\omega) &:=& \{u\in H^1(D\cap\omega) \colon 
\exists \widetilde{u} \in S^{p,1}_0(\mathcal{T}_h,\Gamma_D) \;
\mbox{s.t.} \ 
u|_{D\cap\omega} = \widetilde{u}|_{D\cap\omega} , \supp \widetilde{u} \subset \ol{\omega}, \nonumber \\ 
& & \; \phantom{\{u \in }  a(u,\psi_h) = 0, \; \forall \; \psi_h \in S^{p,1}_0(\mathcal{T}_h,\Gamma_D) \, \text{with} \, \supp \psi_h \subset \ol{D\cap\omega}\}.
\end{eqnarray} 
For the proof of Proposition~\ref{thm:function-approximation} 
and subsequently Theorems~\ref{th:blockapprox} and \ref{th:Happrox}, we will only need the special
case $\omega = \Omega$; 
the general case ${\mathcal H}_h(D,\omega)$ with $\omega \ne \Omega$ will be required in our analysis
of $LU$-decompositions in Section~\ref{sec:LU-decomposition}. 

Clearly, the finite dimensional space $\mathcal{H}_{h}(D,\omega)$ 
is a closed subspace of $H^1(D\cap\omega)$, 
and we have $\phi_h \in \mathcal{H}_{h}(B_{R_{\tau}},\Omega)$ for the solution $\phi_h$ of \eqref{eq:model} 
with $\supp f \subset B_{R_{\sigma}}\cap\Omega$ and bounding boxes $B_{R_{\tau}},B_{R_{\sigma}}$ 
that satisfy the $\eta$-admissibility criterion \eqref{eq:admissibility}. 
Since multiplications of elements of ${\mathcal H}_h(D,\omega)$ with cut-off function 
and trivial extensions to $\Omega$ appear repeatedly in the sequel, we note the following 
very simple lemma: 
\begin{lemma}
\label{lemma:eta-u}
Let $\omega$ be a union of elements, $D \subset \R^d$ be bounded and open, and 
$\eta \in W^{1,\infty}(\R^d)$ with $\supp \eta \subset D$. For $u \in {\mathcal H}_h(D,\omega)$ 
define the function $\eta u$ pointwise on $\Omega$ 
by $(\eta u)(x):= \eta(x) u(x)$ for $x \in D\cap \omega$ 
and $(\eta u)(x) = 0$ for $x \not\in D \cap \omega$. Then 
\begin{enumerate}[(i)]
\item 
\label{item:lemma:eta-u-1}
$\eta u \in H^1_0(\Omega;\Gamma_D)$ 
\item 
\label{item:lemma:eta-u-2}
$\supp (\eta u) \subset \overline{D\cap\omega}$
\item 
\label{item:lemma:eta-u-3}
If $\eta \in S^{q,1}({\mathcal T}_h)$,  then $\eta u \in S^{p+q,1}_0({\mathcal T}_h, \Gamma_D)$. 
\end{enumerate}
\end{lemma}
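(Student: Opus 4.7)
The plan is to reduce everything to properties of the product $\eta\widetilde u$, where $\widetilde u \in S^{p,1}_0({\mathcal T}_h,\Gamma_D)$ is the extension guaranteed by the definition of ${\mathcal H}_h(D,\omega)$. More precisely, I would first check that the pointwise definition of $\eta u$ on $\Omega$ actually coincides, almost everywhere, with $\eta \widetilde u$: on $D\cap\omega$ the two functions agree because $u=\widetilde u$ there, while on $\Omega\setminus(D\cap\omega)$ a short case distinction does the job. If $x\notin D$, then $\eta(x)=0$ since $\supp\eta\subset D$, so $\eta\widetilde u(x)=0=(\eta u)(x)$. If $x\in D$ but $x\notin\omega$, then, because $\omega$ is a union of elements, $\bar\omega\setminus\omega$ has measure zero, so $\widetilde u=0$ a.e.\ on $\Omega\setminus\omega$ and therefore $\eta\widetilde u(x)=0=(\eta u)(x)$ a.e.

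Assertion (\ref{item:lemma:eta-u-2}) is then immediate from the pointwise definition, since $\eta u$ vanishes identically on $\Omega\setminus(D\cap\omega)$. For (\ref{item:lemma:eta-u-1}), the identification $\eta u=\eta\widetilde u$ a.e.\ gives membership in $H^1(\Omega)$: $\eta\in W^{1,\infty}(\R^d)$ and $\widetilde u\in H^1(\Omega)$, so the standard product rule in Sobolev spaces yields $\eta\widetilde u\in H^1(\Omega)$ with $\nabla(\eta\widetilde u)=(\nabla\eta)\widetilde u+\eta\nabla\widetilde u$. To obtain the Dirichlet trace condition on $\Gamma_D$, I use $\gamma_0^{\rm int}(\eta\widetilde u)=\eta\,\gamma_0^{\rm int}\widetilde u=0$ on $\Gamma_D$, since $\widetilde u\in H^1_0(\Omega,\Gamma_D)$; hence $\eta u\in H^1_0(\Omega,\Gamma_D)$.

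For (\ref{item:lemma:eta-u-3}), if additionally $\eta\in S^{q,1}({\mathcal T}_h)$, then $\eta$ is continuous on $\Omega$ and polynomial of degree $\leq q$ on each element $T_j$, while $\widetilde u$ is continuous and polynomial of degree $\leq p$ on each element. The product $\eta\widetilde u$ is therefore continuous and elementwise polynomial of degree $\leq p+q$, i.e., lies in $S^{p+q,1}({\mathcal T}_h)$. Combined with the Dirichlet trace condition already established in (\ref{item:lemma:eta-u-1}), this yields $\eta u\in S^{p+q,1}_0({\mathcal T}_h,\Gamma_D)$.

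There is no real obstacle here; the only subtle point is the measure-theoretic bookkeeping in step one, namely exploiting that $\omega$ is a union of elements in order to identify $(\eta u)(x)$ with $\eta(x)\widetilde u(x)$ almost everywhere on $\Omega$, which is what makes the rest of the argument purely routine.
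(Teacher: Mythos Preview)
Your proposal is correct and follows the same approach as the paper: identify $\eta u$ with $\eta\widetilde u$ via the extension $\widetilde u\in S^{p,1}_0(\mathcal T_h,\Gamma_D)$ and then invoke standard Sobolev product rules. The paper's proof only sketches (\ref{item:lemma:eta-u-1}) and leaves (\ref{item:lemma:eta-u-2}), (\ref{item:lemma:eta-u-3}) and the $\Gamma_D$-trace condition implicit; your version supplies these details, including the measure-theoretic bookkeeping for the identification on $\Omega\setminus\omega$, but there is no substantive difference in strategy.
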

\begin{proof}
We only illustrate (\ref{item:lemma:eta-u-1}). 
Given $u \in {\mathcal H}_h(D,\omega)$ there exists by definition 
a function $\widetilde u \in S^{p,1}_0({\mathcal T}_h,\Gamma_D)$ with 
$\supp \widetilde u \subset \overline{\omega}$. By the support properties of 
$\eta$ and $\widetilde u$, the function $\eta u$ coincides with $\eta \widetilde u$. 
As the product of an $H^1(\Omega)$-function and a Lipschitz continuous function,  
the function $\eta \widetilde u$ is in $H^1(\Omega)$.  
\end{proof}

A main tool in our proofs is a Scott-Zhang projection 
$J_h: H^1_0(\Omega;\Gamma_D) \rightarrow S^{p,1}_0({\mathcal T}_h; \Gamma_D)$ of the form introduced 
in \cite{ScottZhang}. It can be selected to have the following additional mapping property 
for any chosen union $\omega$ of elements: 
\begin{equation}
\label{eq:scott-zhang-bc}
\supp u \subset \overline{\omega} \quad \Longrightarrow \quad \supp J_h u  \subset \overline{\omega}. 
\end{equation}
By $\omega_T := \bigcup\left\{T' \in \mathcal{T}_h \;:\; T \cap T' \neq \emptyset\right\}$, we denote the element patch of $T$, 
which contains $T$ and all elements $T' \in \mathcal{T}_h$ that have a common node with $T$. Then, $J_h$ has
the following local approximation property for $\mathcal{T}_h$-piecewise $H^{\ell}$-functions 
$u \in H^{\ell}_{\text{pw}}(\T_h,\omega) := \{u\in L^2(\omega) : u|_{T} \in H^{\ell}(T)\, \forall \, T \in \T_h\}$
\begin{equation}\label{eq:SZapprox}
\norm{u-J_h u}_{H^m(T)}^2 \leq C h^{2(\ell-m)}\! \sum_{T' \subset \omega_T}\abs{u}_{H^{\ell}(T')}^2, \; 0 \leq m \leq 1, \ m \leq \ell \leq p+1.
\end{equation}
The constant $C>0$ depends only on $\gamma$-shape regularity of $\mathcal{T}_h$, the dimension $d$, and the polynomial degree $p$. In particular, it is independent of the choice of the set $\omega$ in (\ref{eq:scott-zhang-bc}).

In the following, we will construct approximations on nested boxes and therefore 
introduce the notion of concentric boxes.
\begin{definition}[concentric boxes]
 Two boxes $B_R$, $B_{R'}$ of side length $R,R'$ are said to be concentric, if they have the same barycenter and
$B_R$ can be obtained by a stretching of $B_{R'}$ by the factor $R/R'$ taking their common barycenter as the origin. 
\end{definition}

For a box $B_R$ with side length $R \leq 2 \,{\rm diam}(\Omega)$, we introduce the norm
\begin{equation*}
\triplenorm{u}_{h,R}^2 := \left(\frac{h}{R}\right)^2 \norm{\nabla u}^2_{L^2(B_R\cap\omega)} + \frac{1}{R^2}\norm{u}_{L^2(B_R\cap\omega)}^2,
\end{equation*}
which is, for fixed $h$, equivalent to the $H^1$-norm. 
The following lemma states a Caccioppoli-type estimate for functions in $\mathcal{H}_h(B_{(1+\delta)R},\omega)$, 
where $B_{(1+\delta)R}$ and $B_R$ are concentric boxes.

\begin{lemma}\label{lem:Caccioppoli}
Let $\delta\in (0,1)$, $\frac{h}{R} \leq \frac{\delta}{4}$ and let $\omega \subseteq \Omega$ 
be of the form \eqref{eq:subset}. Let $B_{R}$, $B_{(1+\delta)R}$ be two concentric boxes. 
Let $u\in\mathcal{H}_h(B_{(1+\delta)R},\omega)$.  
Then, there exists a constant $C_{\text{reg}} > 0$ which depends only on 
the boundary value problem \eqref{eq:modelstrong},
 $\Omega$, $d$, $p$, and the $\gamma$-shape regularity of $\T_h$, such that 
\begin{equation}\label{eq:Caccioppoliiterated}
\norm{\nabla u}_{L^2(B_{R}\cap\omega)} \leq \norm{\nabla u}_{L^2(B_{R}\cap\omega)} + \skp{\alpha u,u}_{L^2(B_R \cap (\Gamma_{\mathcal{R}}\cap\ol{\omega}))}^{1/2} \leq C_{\text{reg}}\frac{1+\delta}{\delta} \triplenorm{u}_{h,(1+\delta)R}.
\end{equation}
\end{lemma}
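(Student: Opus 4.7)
I would prove this discrete Caccioppoli estimate in the spirit of \cite{BebendorfHackbusch}, by testing the Galerkin-orthogonal function $u$ against a cutoff-multiplied discrete function and using Scott--Zhang quasi-interpolation to produce an admissible discrete test function. Let $\widetilde u\in S^{p,1}_0(\mathcal T_h,\Gamma_D)$ denote the discrete function with $\widetilde u|_{B_{(1+\delta)R}\cap\omega}=u$ and $\supp\widetilde u\subset\overline\omega$, guaranteed by the definition of $\mathcal H_h(B_{(1+\delta)R},\omega)$. I choose a smooth cutoff $\eta\in C_0^\infty(\R^d)$ with $\eta\equiv 1$ on $B_R$, $\supp\eta\subset B_{(1+\delta/2)R}$, and $\|\nabla\eta\|_{L^\infty}\lesssim(\delta R)^{-1}$, and set $v:=\eta^2\widetilde u$. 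By Lemma~\ref{lemma:eta-u}, $v\in H^1_0(\Omega,\Gamma_D)$ with $\supp v\subset\overline{B_{(1+\delta/2)R}\cap\omega}$; the support-preserving property \eqref{eq:scott-zhang-bc} together with the mesh condition $h\leq\delta R/4$ (which controls the $O(h)$ enlargement of support by $J_h$) ensures $\supp J_hv\subset\overline{B_{(1+\delta)R}\cap\omega}$. Hence $J_hv$ is admissible in the orthogonality defining $\mathcal H_h$, giving $a(u,J_hv)=0$ and thus the key identity $a(u,v)=a(u,w)$ with $w:=v-J_hv$.

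Expanding $a(u,v)$ via $\nabla(\eta^2\widetilde u)=\eta^2\nabla\widetilde u+2\eta(\nabla\eta)\widetilde u$, coercivity of $\mathbf C$ yields $\langle\mathbf C\nabla u,\eta^2\nabla u\rangle\geq c_1\|\nabla u\|_{L^2(B_R\cap\omega)}^2$, and non-negativity of $\eta^2$ gives the Robin contribution $\geq\langle\alpha u,u\rangle_{L^2(B_R\cap\Gamma_{\mathcal R}\cap\overline\omega)}$. Young's inequality absorbs the cross term $2\langle\mathbf C\nabla u,\eta(\nabla\eta)\widetilde u\rangle$ and the convection/reaction terms $\langle\mathbf b\cdot\nabla u+\beta u,\eta^2\widetilde u\rangle$ into a fraction of $\|\eta\nabla u\|^2$ plus an $O((\delta R)^{-2})\|\widetilde u\|_{L^2(B_{(1+\delta)R}\cap\omega)}^2$ remainder.

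The core step is the bound on $a(u,w)$. On each element $T$ touching the transition region of $\eta$, picking $x_T\in T$ and exploiting that $J_h$ fixes $\eta^2(x_T)\widetilde u\in S^{p,1}$, one obtains the super-approximation identity $w|_T=(I-J_h)[(\eta^2-\eta^2(x_T))\widetilde u]|_T$. Combining the Lipschitz bound $|\eta^2-\eta^2(x_T)|\lesssim h/(\delta R)$ on $\omega_T$ with the Scott--Zhang estimates \eqref{eq:SZapprox} yields
\begin{equation*}
\|\nabla w\|_{L^2(T)}\lesssim\frac{1}{\delta R}\|\widetilde u\|_{L^2(\omega_T)}+\frac{h}{\delta R}\|\nabla\widetilde u\|_{L^2(\omega_T)},
\end{equation*}
and an analogous $L^2$-bound. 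Cauchy--Schwarz on the three components of $a(u,w)$ (with a discrete trace inequality handling the Robin term $\langle\alpha u,w\rangle_{\Gamma_{\mathcal R}}$), together with the discrete inverse estimate $\|\nabla\widetilde u\|_{L^2(T)}\lesssim h^{-1}\|\widetilde u\|_{L^2(T)}$ applied in the thin layer $L$ where $w\neq 0$, and with $h^{-1}\leq 4(\delta R)^{-1}$ from the mesh condition, collapse everything to a bound of the form $C(\delta R)^{-2}\|\widetilde u\|_{L^2(B_{(1+\delta)R}\cap\omega)}^2$. Since $\|\widetilde u\|_{L^2}^2/(\delta R)^2\lesssim\delta^{-2}(1+\delta)^2\triplenorm{u}_{h,(1+\delta)R}^2$, the assertion follows after taking square roots and combining additive contributions.

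The main obstacle is the gradient--gradient term $\langle\mathbf C\nabla u,\nabla w\rangle$: a naive Cauchy--Schwarz combined with the basic Scott--Zhang bound $\|\nabla w\|\lesssim\|\nabla(\eta^2\widetilde u)\|$ produces a $\|\nabla\widetilde u\|^2$ contribution on the enlarged ball that cannot be absorbed into the left-hand side or accommodated by the $(h/R)^2\|\nabla u\|^2$ part of the triple norm. Only the combination of the super-approximation identity (which uses the piecewise-polynomial structure of $\widetilde u$ to gain a factor of $h$), the discrete inverse estimate, and the mesh condition $h\leq\delta R/4$ reduces this delicate term to the $(\delta R)^{-2}\|\widetilde u\|^2$ form compatible with $\triplenorm{u}_{h,(1+\delta)R}$; the bookkeeping of the $(1+\delta)/\delta$ prefactor through Young's inequality absorptions is then routine.
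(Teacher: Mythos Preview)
Your overall strategy matches the paper's: test against $\eta^2\widetilde u$, use the orthogonality defining $\mathcal H_h$ to replace $a(u,\eta^2\widetilde u)$ by $a(u,w)$ with $w=\eta^2\widetilde u-J_h(\eta^2\widetilde u)$, and control $w$ by super-approximation. But the closure contains a decisive error: the mesh condition $h/R\leq\delta/4$ reads $h\leq\delta R/4$, hence $h^{-1}\geq 4(\delta R)^{-1}$ --- the \emph{opposite} of the inequality ``$h^{-1}\leq 4(\delta R)^{-1}$'' that you invoke. You use the wrong direction to convert inverse-estimate factors $h^{-1}$ into $(\delta R)^{-1}$, and without it your collapse of $a(u,w)$ to $C(\delta R)^{-2}\|\widetilde u\|_{L^2(B_{(1+\delta)R}\cap\omega)}^2$ does not follow.

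The underlying problem is that your super-approximation bound is too weak. Freezing $\eta^2$ at $x_T$ and using only the $H^1$-stability of $J_h$ yields $\|\nabla w\|_{L^2(T)}\lesssim(\delta R)^{-1}\|\widetilde u\|_{L^2(\omega_T)}+h(\delta R)^{-1}\|\nabla\widetilde u\|_{L^2(\omega_T)}$. After Cauchy--Schwarz in the principal term $\langle\mathbf C\nabla u,\nabla w\rangle$, the contribution $(\delta R)^{-1}\|\nabla u\|_{L^2(B)}\|\widetilde u\|_{L^2(B)}$ cannot be Young-split into pieces bounded by $\triplenorm{u}^2_{h,(1+\delta)R}$ (matching the $(h/R)^2\|\nabla u\|^2$ part forces an $h^{-2}\|u\|^2$ remainder), nor can $\|\nabla u\|_{L^2(L)}$ be absorbed into the cutoff-localized left-hand side $\|\eta\nabla u\|^2$, since $\eta$ vanishes on part of the transition layer. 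The paper recovers the missing factor $h/(\delta R)$ by a sharper super-approximation: it takes $\eta\in S^{1,1}(\mathcal T_h)$ \emph{piecewise affine} so that $\eta^2 u\in S^{p+2,1}$, applies the Scott--Zhang estimate \eqref{eq:SZapprox} at level $\ell=p+1$, exploits $D^j\eta|_T=0$ for $j\geq 2$ in the Leibniz expansion, and uses an inverse estimate on the product $\eta u$ to obtain $\|\nabla w\|_{L^2(\Omega)}\lesssim\frac{h}{\delta R}\|\nabla(\eta u)\|_{L^2(B)}$. This bound --- with the extra factor $h/(\delta R)$ and with $\|\nabla(\eta u)\|$ rather than $\|\nabla u\|_{L^2(B)}$ on the right --- is precisely what allows a Young-inequality absorption into $\|\nabla(\eta u)\|^2$ on the left-hand side and leaves only $\frac{h^2}{\delta^2 R^2}\|\nabla u\|^2_{L^2(B)}+\frac{1}{\delta^2 R^2}\|u\|^2_{L^2(B)}$, i.e., the triple norm.
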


\begin{proof}
Let $\eta \in S^{1,1}(\T_h)$ be a piecewise affine cut-off function with 
$\supp \eta \subset B_{(1+\delta/2)R}\cap\overline{\Omega}$, 
$\eta \equiv 1$ on $B_{R}\cap\omega$, $0\leq\eta\leq 1$,
and $\norm{\nabla \eta}_{L^{\infty}(B_{(1+\delta)R}\cap\Omega)} \lesssim \frac{1}{\delta R}$, 
$\norm{D^2 \eta}_{L^{\infty}(B_{(1+\delta)R}\cap\Omega)} \lesssim \frac{1}{\delta^2R^2}$.
By Lemma~\ref{lemma:eta-u} we have  
$\eta^2 u \in S^{p+2,1}_0({\mathcal T}_h,\Gamma_D) \subset H^1_0(\Omega;\Gamma_D)$ and
\begin{equation}
\label{eq:support-eta^2u}
\supp (\eta^2 u) \subset \overline{B_{(1+\delta/2)R} \cap \omega}. 
\end{equation}
Recall that $h$ is the maximal element diameter and $4h\leq \delta R$. Hence, for the 
Scott-Zhang operator $J_h$, we have $\supp J_h (\eta^2 u) \subset \overline{B_{(1+\delta) R}}$; 
in view of (\ref{eq:scott-zhang-bc}) we furthermore have $\supp J_h (\eta^2 u) \subset \overline{\omega}$
so that 
\begin{equation}
\label{eq:support-J_heta^2u}
\supp J_h (\eta^2 u) \subset \overline{B} \quad \text{with} \,  B:= B_{(1+\delta) R} \cap \omega.
\end{equation}
With the coercivity of the bilinear form $a(\cdot,\cdot)$ and $\frac{1}{\delta R} \lesssim \frac{1}{\delta^2R^2}$,
 since $\delta < 1$ and $R \leq 2\diam(\Omega)$, we have
\begin{subequations}
\begin{align}
\norm{\nabla u}_{L^2(B_R \cap \omega)}^2 +\skp{\alpha u,u}_{L^2(\overline{B_{R}\cap\omega}\cap\Gamma_R)}
 &\leq \norm{\nabla(\eta u)}_{L^2(B)}^2 + \skp{\alpha \eta u, \eta u}_{L^2(\overline{B}\cap\Gamma_R)} \label{eq:CacTemp1a}\\
&\lesssim a(\eta u,\eta u) \nonumber \\
&=  \int_{B}{\mathbf{C} \nabla u \cdot \nabla (\eta^2 u) + u^2  \mathbf{C} \nabla \eta \cdot \nabla \eta \, dx}  
+ \skp{\mathbf{b}\cdot \nabla u+\beta u, \eta^2 u}_{L^2(B)} +\nonumber \\
& \quad \skp{\mathbf{b}\cdot (\nabla \eta) u,\eta u}_{L^2(B)} +
\skp{\alpha u, \eta^2 u}_{L^2(\overline{B} \cap \Gamma_{\mathcal{R}})} +  \frac{1}{\delta^2 R^2}\norm{u}_{L^2(B)}^2\nonumber\\
&\lesssim   \int_{B}{\mathbf{C} \nabla u \cdot \nabla (\eta^2 u) dx}  
+ \skp{\mathbf{b}\cdot \nabla u+\beta u, \eta^2 u}_{L^2(B)} +\nonumber \\
&  \quad
\skp{\alpha u, \eta^2 u}_{L^2(\overline{B} \cap \Gamma_{\mathcal{R}})} +  \frac{1}{\delta^2 R^2}\norm{u}_{L^2(B)}^2\nonumber \\
&= a(u,\eta^2 u) + \frac{1}{\delta^2 R^2}\norm{u}_{L^2(B)}^2.\label{eq:CacTemp1b}
\end{align}
\end{subequations}
Recall from (\ref{eq:support-J_heta^2u}) that 
$\supp J_h (\eta^2 u) \subset \overline{B}$. 
The orthogonality relation \eqref{eq:approxspace} in the definition of the space $\H_h(B,\omega)$
therefore implies
\begin{eqnarray}\label{eq:tempCaccioppoli}
a(u,\eta^2 u) &=& a(u,\eta^2u - J_h(\eta^2 u))\nonumber \\
  &\leq& \norm{\mathbf{C}}_{L^{\infty}(B)} \norm{\nabla u}_{L^2(B)}\norm{\nabla(\eta^2u - J_h(\eta^2 u))}_{L^2(B)} \nonumber\\
& &+ \left(\norm{\mathbf{b}}_{L^{\infty}(B)} \norm{\nabla u}_{L^2(B)}+\norm{\beta}_{L^{\infty}(B)}\norm{\eta u}_{L^2(B)}\right)\norm{\eta^2u - J_h(\eta^2 u)}_{L^2(B)}  \nonumber \\
& & +\abs{\skp{\alpha u,\eta^2 u -J_h(\eta^2 u)}_{L^2(\overline{B}\cap\Gamma_{\mathcal{R}})}}.
\end{eqnarray}
The approximation property \eqref{eq:SZapprox}, the requirement (\ref{eq:scott-zhang-bc}), 
and the support properties of $\eta^2 u$ lead to
\begin{equation}
\label{eq:foo}
\norm{\nabla(\eta^2u - J_h(\eta^2 u))}_{L^2(\Omega)}^2  \lesssim h^{2p}\sum_{\substack{T \in \mathcal{T}_h \\ T \subseteq B}}\norm{D^{p+1}(\eta^2 u)}_{L^2(T)}^2.
\end{equation}
Since, for each $T \subset B$ we have  $u|_T \in {\mathcal P}_p$,  we get $D^{p+1}u|_T = 0$ and $\eta \in S^{1,1}(\T_h)$ implies $D^j \eta|_T = 0$ for $j \geq 2$.
With the Leibniz product rule, the right-hand side of (\ref{eq:foo}) can therefore be estimated by
\begin{eqnarray*}
\norm{D^{p+1}(\eta^2 u)}_{L^2(T)}  &\lesssim& \norm{D^2 \eta^2 D^{p-1}u + \eta \nabla \eta D^{p} u}_{L^2(T)} \lesssim
\norm{\nabla \eta \cdot \nabla \eta D^{p-1}u + \eta \nabla \eta D^{p} u}_{L^2(T)} \\
&\lesssim& \frac{1}{\delta R}\norm{\nabla \eta D^{p-1}u + \eta D^{p} u}_{L^2(T)}
\lesssim \frac{1}{\delta R} \norm{D^{p}(\eta u)}_{L^2(T)},
\end{eqnarray*} 
where the suppressed constant depends on $p$.
The inverse inequality
$\displaystyle 
\norm{D^p(\eta u)}_{L^2(T)} \lesssim h^{-p+1}\norm{\nabla(\eta u)}_{L^2(T)}
$, see e.g. \cite{GHS}, 
leads to
\begin{eqnarray}\label{eq:tempCaccioppoli2}
\norm{\nabla(\eta^2u - J_h(\eta^2 u))}_{L^2(\Omega)}^2  &\lesssim&
 \frac{1}{\delta^2 R^2}h^{2p } \sum_{\substack{T \in \mathcal{T}_h \\ T \subseteq B}}\norm{D^{p}(\eta u)}_{L^2(T)}^2 
\lesssim \frac{h^2}{\delta^2R^2} \norm{\nabla(\eta u)}_{L^2(B)}^2 \nonumber \\
&\lesssim& \frac{h^2}{\delta^4R^4} \norm{u}_{L^2(B)}^2 + \frac{h^2}{\delta^2R^2} \norm{\eta \nabla u}_{L^2(B)}^2.
\end{eqnarray}
The same line of reasoning leads to
\begin{equation}\label{eq:tempCaccioppoli3}
\norm{\eta^2u - J_h(\eta^2 u)}_{L^2(\Omega)}\lesssim \frac{h^2}{\delta^2R^2} \norm{u}_{L^2(B)} + \frac{h^2}{\delta R} \norm{\eta \nabla u}_{L^2(B)}.
\end{equation}
In order to derive an estimate for the boundary term in \eqref{eq:tempCaccioppoli}, 
we need a second smooth cut-off function $\widetilde \eta$ 
with $\supp \widetilde \eta \subset \overline{B_{(1+\delta)R}}$ and 
$\widetilde \eta \equiv 1$ on $\supp (J_h(\eta^2 u) - \eta^2 u)$ and 
$\|\nabla \widetilde \eta\|_{L^\infty(B_{(1+\delta)R})} \lesssim \frac{1}{\delta R}$. 
By Lemma~\ref{lemma:eta-u} we can define the function 
$\widetilde \eta u \in H^1(\Omega)$ with the support property 
$\displaystyle 
\supp \widetilde \eta u \subset \overline{B_{(1+\delta)R} \cap \omega } = \overline{B} 
$
and therefore 
\begin{equation}
\label{eq:foo-1}
\|\widetilde \eta u\|_{H^1(\Omega)} \leq \norm{u}_{L^2(B)}+\|\nabla(\widetilde \eta u)\|_{L^2(B)} \lesssim \frac{1}{\delta R} \|u\|_{L^2(B)} + \|\nabla u\|_{L^2(B)}. 
\end{equation}
Then, we get
\bea
\abs{\skp{\alpha u,\eta^2 u -J_h(\eta^2 u)}_{L^2(\overline{B}\cap\Gamma_{\mathcal{R}})}} &=& \abs{\skp{\alpha \widetilde{\eta} u,\eta^2 u -J_h(\eta^2 u)}_{L^2(\overline{B}\cap\Gamma_{\mathcal{R}})}}\\
&\leq& \norm{\alpha}_{L^{\infty}(\overline{B}\cap\Gamma_{\mathcal{R}})}\norm{\widetilde{\eta} u}_{L^2(\overline{B}\cap\Gamma_{\mathcal{R}})}\norm{\eta^2 u \!- \!J_h(\eta^2 u)}_{L^2(\overline{B}\cap\Gamma_{\mathcal{R}})} . 
\eea
The multiplicative trace inequality for $\Omega$ and the estimate 
\eqref{eq:foo-1} gives 
\begin{align*}
 \|\widetilde \eta u\|_{L^2(\Gamma)}  \lesssim  
\|\widetilde \eta u\|^{1/2}_{L^2(\Omega)} \|\widetilde \eta u\|^{1/2}_{H^1(\Omega)} 
\lesssim \frac{1}{\sqrt{\delta R}} \|u\|_{L^2(B)} + \|u\|^{1/2}_{L^2(B)} \|\nabla u\|^{1/2}_{L^2(B)}.
\end{align*}
The multiplicative trace inequality for $\Omega$ 
and the estimates \eqref{eq:tempCaccioppoli2} -- \eqref{eq:tempCaccioppoli3} imply
\begin{align*}
& \|\eta^2 u - J_h (\eta^2 u)\|_{L^2(\Gamma)} \lesssim 
\|\eta^2 u - J_h (\eta^2 u)\|_{L^2(\Omega)} + 
\|\eta^2 u - J_h (\eta^2 u)\|_{L^2(\Omega)}^{1/2}  \|\nabla( \eta^2 u - J_h (\eta^2 u))\|_{L^2(\Omega)}^{1/2}  \\
&\lesssim  \left( \frac{h^2}{\delta^2 R^2} \|u\|_{L^2(B)} + \frac{h^2}{\delta R}\|\nabla u\|_{L^2(B)} \right)  + 
\left( \frac{h}{\delta R} \|u\|^{1/2}_{L^2(B)} + \frac{h}{\sqrt{\delta R}}\|\nabla u\|^{1/2}_{L^2(B)} \right) 
\left( \frac{\sqrt{h}}{\delta R} \|u\|^{1/2}_{L^2(B)} + \frac{\sqrt{h}}{\sqrt{\delta R}}\|\nabla u\|^{1/2}_{L^2(B)} \right) \\
&\lesssim \frac{h^{3/2}}{(\delta R)^2} \|u\|_{L^2(B)}  + \frac{h^{3/2}}{\delta R} \|\nabla u\|_{L^2(B)} 
+ \frac{h^{3/2}}{(\delta R)^{3/2}} \|u\|^{1/2}_{L^2(B)} \|\nabla u\|^{1/2}_{L^2(B)} \\
&\lesssim \frac{h^{3/2}}{(\delta R)^2} \|u\|_{L^2(B)}  + \frac{h^{3/2}}{\delta R} \|\nabla u\|_{L^2(B)}. 
\end{align*}
Therefore, 
\begin{align*}
& \norm{\widetilde{\eta} u}_{L^2(\Gamma)} \norm{\eta^2 u -J_h(\eta^2 u)}_{L^2(\Gamma)} \lesssim 
\left(\frac{1}{\sqrt{\delta R}} \|u\|_{L^2(B)} + \|u\|^{1/2}_{L^2(B)}\|\nabla u\|^{1/2}_{L^2(B)} 
\right) 
\left( \frac{h^{3/2}}{(\delta R)^2} \|u\|_{L^2(B)} + \frac{h^{3/2}}{\delta R} \|\nabla u\|_{L^2(B)} \right)
\\
& \lesssim 
\frac{h^{3/2}}{(\delta R)^{5/2}} \|u\|^2_{L^2(B)} 
+ 
\frac{h^{3/2}}{(\delta R)^{3/2}} \|u\|_{L^2(B)} \|\nabla u\|_{L^2(B)} 
+ 
\frac{h^{3/2}}{(\delta R)^{2}} \|u\|^{3/2}_{L^2(B)} \|\nabla u\|^{1/2}_{L^2(B)} 
+ 
\frac{h^{3/2}}{\delta R} \|u\|^{1/2}_{L^2(B)} \|\nabla u\|^{3/2}_{L^2(B)}. 
\end{align*}
Young's inequality and $h/(\delta R) \leq 1/4$ allow us to conclude 
(rather generously) 
\bean\label{eq:tempCaccioppoli4}
\abs{\skp{\alpha u,\eta^2 u -J_h(\eta^2 u)}_{L^2(\overline{B}\cap\Gamma_{\mathcal{R}})}} &\lesssim& \norm{\widetilde{\eta} u}_{L^2(\Gamma)} \norm{\eta^2 u -J_h(\eta^2 u)}_{L^2(\Gamma)}\nonumber \\
&\lesssim& \! \!
\frac{h^2}{(\delta R)^2} \|\nabla u\|^2_{L^2(B)}\! +\! \frac{1}{(\delta R)^2} \|u\|^2_{L^2(B)}
\! = \! \left(\frac{1+\delta}{\delta}\right)^2\!\! \triplenorm{u}^2_{h,(1+\delta)R}. 
\eean
Inserting the estimates \eqref{eq:tempCaccioppoli2}, \eqref{eq:tempCaccioppoli3}, \eqref{eq:tempCaccioppoli4} 
into \eqref{eq:tempCaccioppoli} and with Young's inequality, we get with \eqref{eq:CacTemp1b} that
\begin{eqnarray*}
\norm{\nabla(\eta u)}^2_{L^2(B)} + \skp{\alpha\eta u, \eta u}_{L^2(\overline{B}\cap\Gamma_{\mathcal{R}})} &\lesssim&
 a(u,\eta^2 u) +   \frac{1}{\delta^2 R^2}\norm{u}_{L^2(B)}^2  \\
&\lesssim& \norm{\nabla u}_{L^2(B)}\left(\frac{h}{\delta^2R^2}\norm{u}_{L^2(B)}+\frac{h}{\delta R}\norm{\eta\nabla u}_{L^2(B)}\right) \\
& &+ \left(\norm{\nabla u}_{L^2(B)}+\norm{\eta u}_{L^2(B)}\right)\left(\frac{h^2}{\delta^2R^2}\norm{u}_{L^2(B)}+\frac{h^2}{\delta R}\norm{\eta\nabla u}_{L^2(B)}\right)  \\
& & +\frac{h^{2}}{\delta^2 R^{2}}\norm{\nabla u}_{L^2(B)}^2 + \frac{1}{\delta^2 R^2}\norm{u}_{L^2(B)}^2  \\
&\leq& C(\epsilon)\frac{h^2}{\delta^2R^2}\norm{\nabla u}_{L^2(B)}^2 
 +C(\epsilon)\frac{1}{\delta^2R^2}\norm{u}_{L^2(B)}^2+ \epsilon\norm{\eta\nabla u}_{L^2(B)}^2. 
\end{eqnarray*}
Moving the term $\epsilon \norm{\eta \nabla u}_{L^2(B)}^2$ to the left-hand side and 
inserting this estimate in \eqref{eq:CacTemp1a}, we conclude the proof.
\end{proof}

\subsection{Low-dimensional approximation in ${\mathcal{H}_{h}}(D,\omega)$}
In this subsection, we will derive a low dimensional approximation of the Galerkin solution
by Scott-Zhang interpolation on a coarser grid.

%
We need to be able to extend functions defined on $B_{(1+2\delta)R}\cap\omega$ to ${\mathbb R}^d$. 
To this end, we use an extension operator $E: H^1(\Omega) \ra H^1(\R^d)$, 
see e.g. \cite[Theorem 4.32]{Adams}, which satisfies $E u = u$ on $\Omega$ and the $H^1$-stability estimate
\bee
\norm{{E} u}_{H^1(\R^d)} \leq C \norm{u}_{H^1(\Omega)}.
\ee
For a function $u \in \H_h(B_{(1+2\delta)R},\omega)$ and a cut-off function 
$\eta \in C_0^{\infty}(B_{(1+2\delta)R})$ with 
$\supp \eta \subset B_{(1+\delta)R}$, $\eta \equiv 1$ on $B_{R}\cap\omega$ we can define the function 
$\eta u \in H^1(\Omega)$ with the aid of Lemma~\ref{lemma:eta-u}. We note 
the support property $\supp (\eta u) \subset \overline{B_{(1+2\delta)R} \cap \omega}$, 
due to $\supp u \subset \overline{\omega}$. Therefore, the extension 
of $\eta u$ to $\Omega$ by zero is in $H^1(\Omega)$. Therefore, we have 
\been\label{eq:extension}
\norm{E(\eta u)}_{H^1(\R^d)} \leq C \norm{\eta u}_{H^1(\omega)}.
\een
Moreover, let 
$\Pi_{h,R} : (H^1(B_R\cap\omega),\triplenorm{\cdot}_{h,R}) \rightarrow (\mathcal{H}_h(B_R,\omega), \triplenorm{\cdot}_{h,R})$ 
be the orthogonal projection, 
which is well-defined since $\mathcal{H}_h(B_R,\omega)\subset H^1(B_R\cap\omega)$ is a closed subspace. 

\begin{lemma}\label{lem:lowdimapp}
Let $\delta\in(0,1)$, $B_R$, $B_{(1+\delta)R}$, and $B_{(1+2\delta)R}$ concentric boxes,
$\omega \subseteq \Omega$ of the form \eqref{eq:subset} 
and $u\in\mathcal{H}_h(B_{(1+2\delta)R},\omega)$. 
Assume $\frac{h}{R}\leq \frac{\delta}{4}$.
Let $\mathcal{K}_H $ be an (infinite) $\gamma$-shape regular triangulation of $\mathbb{R}^d$ and assume 
$\frac{H}{R} \leq \frac{\delta}{4}$ for the corresponding mesh width $H$.
Let $\eta \in C_0^{\infty}(B_{(1+2\delta)R})$ be a cut-off function satisfying 
$\supp \eta \subset B_{(1+\delta)R}$, $\eta \equiv 1$ on $B_{R}\cap\omega$, and 
$\norm{\nabla \eta}_{L^{\infty}(B_{(1+2\delta)R})} \lesssim \frac{1}{\delta R}$.
Moreover, let $J_H: H^1(\mathbb{R}^d) \rightarrow S^{p,1}(\mathcal{K}_H)$ be the Scott-Zhang projection
and $E:H^1(\Omega) \rightarrow H^1(\R^d)$ be an $H^1$-stable extension operator. 
Then, there exists a constant $C_{\rm app} > 0$, which depends only on the boundary value problem 
\eqref{eq:modelstrong}, $\Omega$, $d$, $p$, $\gamma$, and $E$ such that
\begin{enumerate}[(i)]
\item 
\label{item:lem:lowdimapp-ii}
$\big(u-\Pi_{h,R}J_H E (\eta u)\big)|_{B_{R}\cap\omega} \in \mathcal{H}_h(B_{R},\omega)$ \\[-2mm]
\item 
\label{item:lem:lowdimapp-i}
$\triplenorm{u-\Pi_{h,R}J_H E(\eta u)}_{h,R} \leq C_{\rm app}\frac{1+2\delta}{\delta} \left(\frac{h}{R}+\frac{H}{R}\right)\triplenorm{u}_{h,(1+2\delta)R}$ 
\item 
\label{item:lem:lowdimapp-iii}
$\dim W\leq C_{\rm app}\left(\frac{(1+2\delta)R}{H}\right)^d$, where $W:=\Pi_{h,R}J_H E \mathcal{H}_h(B_{(1+2\delta)R},\omega) $. 
\end{enumerate}
\end{lemma}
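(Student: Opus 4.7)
The plan is to handle the three assertions separately, with the main analytical work concentrated in (ii) and the role of the Caccioppoli estimate being the crucial ingredient.

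For (\ref{item:lem:lowdimapp-ii}), I would first check that $u|_{B_{R}\cap\omega}\in \mathcal{H}_h(B_R,\omega)$. The extension $\widetilde u\in S^{p,1}_0(\mathcal T_h,\Gamma_D)$ with $\supp\widetilde u\subset\overline\omega$ is inherited from the hypothesis $u\in\mathcal H_h(B_{(1+2\delta)R},\omega)$, and for every test function $\psi_h\in S^{p,1}_0(\mathcal T_h,\Gamma_D)$ with $\supp \psi_h\subset\overline{B_R\cap\omega}$ we also have $\supp\psi_h\subset\overline{B_{(1+2\delta)R}\cap\omega}$, so the Galerkin orthogonality transfers to the smaller box. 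By definition of the projection, $\Pi_{h,R}J_H E(\eta u)\in \mathcal H_h(B_R,\omega)$, and since this space is a linear subspace, the difference lies in it.

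For (\ref{item:lem:lowdimapp-i}), since $u|_{B_R\cap\omega}\in\mathcal H_h(B_R,\omega)$ by step (\ref{item:lem:lowdimapp-ii}), we have $\Pi_{h,R}u=u$, so
\begin{equation*}
u-\Pi_{h,R}J_H E(\eta u)=\Pi_{h,R}\bigl(u-J_H E(\eta u)\bigr),
\end{equation*}
and the contractivity of the orthogonal projection $\Pi_{h,R}$ on $(H^1(B_R\cap\omega),\triplenorm{\cdot}_{h,R})$ reduces the task to estimating $\triplenorm{u-J_H E(\eta u)}_{h,R}$. Because $\eta\equiv 1$ on $B_R\cap\omega$ and $E$ is the identity on $\Omega$, the integrand coincides with the Scott--Zhang error $E(\eta u)-J_H E(\eta u)$ on the mesh $\mathcal K_H$; the standard approximation and stability estimates for $J_H$ supply
\begin{equation*}
\norm{E(\eta u)-J_H E(\eta u)}_{L^2(B_R\cap\omega)}\lesssim H\norm{\nabla E(\eta u)}_{L^2(\mathbb R^d)},\qquad \norm{\nabla(E(\eta u)-J_H E(\eta u))}_{L^2(B_R\cap\omega)}\lesssim \norm{\nabla E(\eta u)}_{L^2(\mathbb R^d)}.
\end{equation*}
Inserting these in the definition of $\triplenorm{\cdot}_{h,R}$ produces the prefactor $h/R+H/R$.

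It remains to bound $\norm{\nabla E(\eta u)}_{L^2(\mathbb R^d)}\lesssim \norm{\eta u}_{H^1(\Omega)}$ (using $H^1$-stability of $E$) by $\triplenorm{u}_{h,(1+2\delta)R}$. Splitting $\norm{\eta u}_{H^1(\Omega)}\lesssim (\delta R)^{-1}\norm{u}_{L^2(B_{(1+\delta)R}\cap\omega)}+\norm{\nabla u}_{L^2(B_{(1+\delta)R}\cap\omega)}$, the $L^2$-term is directly dominated by $\frac{1+\delta}{\delta}\triplenorm{u}_{h,(1+2\delta)R}$ via the definition of the triple norm. For the gradient term I would apply the Caccioppoli estimate of Lemma \ref{lem:Caccioppoli} with rescaled parameters $R'=(1+\delta)R$ and $\delta'=\delta/(1+\delta)$, so that $(1+\delta')R'=(1+2\delta)R$ and $(1+\delta')/\delta'=(1+2\delta)/\delta$; the hypothesis $h/R\le\delta/4$ is exactly what is needed to guarantee $h/R'\le\delta'/4$. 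This yields the desired factor $(1+2\delta)/\delta$ in front of $\triplenorm{u}_{h,(1+2\delta)R}$, and combining everything concludes (\ref{item:lem:lowdimapp-i}).

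For (\ref{item:lem:lowdimapp-iii}), the map $u\mapsto \Pi_{h,R}J_H E(\eta u)$ factors through $J_H E(\eta u)\in S^{p,1}(\mathcal K_H)$. The value of this function on $B_R\cap\omega$ is determined by the $\mathcal K_H$-nodal basis functions whose support intersects a fixed $H$-neighbourhood of $\supp\eta u\subset\overline{B_{(1+\delta)R}\cap\omega}$; shape regularity of $\mathcal K_H$ together with $H/R\le\delta/4$ bounds the number of such nodes by $C_{\rm app}((1+2\delta)R/H)^d$. Projecting afterwards by $\Pi_{h,R}$ cannot increase dimension, giving the claim. The main obstacles are the careful rescaling in the Caccioppoli step and the need for the extension operator $E$ to enjoy enough spatial localization so that the Scott--Zhang footprint in step (\ref{item:lem:lowdimapp-iii}) remains confined to a neighborhood of the bounding box; both are technical rather than conceptual and can be handled by a standard choice of $E$.
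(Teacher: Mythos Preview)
Your argument follows the paper's proof essentially step by step: the reduction $u-\Pi_{h,R}J_HE(\eta u)=\Pi_{h,R}(E(\eta u)-J_HE(\eta u))$, the Scott--Zhang approximation bound, the $H^1$-stability of $E$, and the Caccioppoli estimate on the rescaled box $R'=(1+\delta)R$, $\delta'=\delta/(1+\delta)$ are exactly what the paper does. One small clarification: your final caveat about needing $E$ to be spatially localized for (\ref{item:lem:lowdimapp-iii}) is unnecessary---the dimension of $J_H(v)|_{B_R\cap\omega}$ is bounded by the number of $\mathcal K_H$-degrees of freedom whose support meets $B_R$, and this count is $\lesssim ((1+2\delta)R/H)^d$ regardless of how large $\supp E(\eta u)$ is.
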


\begin{proof}
The statement (\ref{item:lem:lowdimapp-iii}) follows from the fact that 
$\dim J_H(E \mathcal{H}_h(B_{(1+2\delta)R},\omega))|_{B_{(1+\delta)R}} \lesssim ((1+2\delta)R/H)^d$. 
For $u \in \mathcal{H}_h(B_{(1+2\delta)R},\omega)$, 
we have $u|_{B_R \cap \omega} \in \mathcal{H}_h(B_{R},\omega)$ as well and hence 
$\Pi_{h,R}\left(u|_{B_{R}\cap\omega}\right) = u|_{B_{R}\cap\omega}$, which gives (\ref{item:lem:lowdimapp-ii}). 
It remains to prove (\ref{item:lem:lowdimapp-i}):
The assumption $\frac{H}{R} \leq \frac{\delta}{4}$ implies 
$\bigcup\{K \in \mathcal{K}_H \;:\; \omega_K \cap B_{R} \neq \emptyset\} \subseteq B_{(1+\delta)R}$.
The locality and the approximation properties \eqref{eq:SZapprox} of $J_H$ yield
\begin{eqnarray*}
\frac{1}{H} \norm{E(\eta u) - J_H E (\eta u)}_{L^2(B_{R})} + 
\norm{\nabla(E(\eta u) - J_HE (\eta u))}_{L^2(B_{R})} \! &\lesssim& \!  \norm{\nabla E (\eta u)}_{L^2(B_{(1+\delta)R})}.
\end{eqnarray*}
We apply Lemma~\ref{lem:Caccioppoli} with
$\widetilde{R} = (1+\delta)R$ and $\widetilde{\delta} = \frac{\delta}{1+\delta}$. 
Note that $(1+\widetilde{\delta})\widetilde{R} = (1+2\delta)R$, and
 $\frac{h}{\widetilde{R}}\leq \frac{\widetilde{\delta}}{4}$
follows from $4h \leq \delta R = \widetilde{\delta}\widetilde{R}$. Hence, we obtain with \eqref{eq:extension}
\begin{align*}
&\triplenorm{u-\Pi_{h,R}J_HE (\eta u)}_{h,R}^2 =\triplenorm{\Pi_{h,R}\left(E (\eta u)-J_H E (\eta u)\right)}^2_{h,R} \leq \triplenorm{E (\eta u)-J_H E (\eta u)}_{h,R}^2 \\
& \qquad= \left(\frac{h}{R}\right)^{2}\norm{\nabla (E(\eta u)-J_H E(\eta u))}_{L^2(B_{R}\cap\omega)}^2   + \frac{1}{R^2} \norm{E(\eta u)-J_H E(\eta u)}_{L^2(B_{R}\cap\omega)}^2\\
&\qquad\lesssim\frac{h^2}{R^2}\norm{\nabla E (\eta u)}_{L^{2}(B_{(1+\delta) R})}^2 + \frac{H^2}{R^2}\norm{\nabla E (\eta u)}_{L^2(B_{(1+\delta)R})}^2
\lesssim\left(\frac{h^2}{R^2} + \frac{H^2}{R^2}\right)\norm{\eta u}_{H^1(\Omega)}^2  \\
&\qquad \lesssim 
\left(\frac{h^2}{R^2} + \frac{H^2}{R^2}\right)\frac{1}{\delta^2 R^2}\norm{u}_{L^2(B_{(1+\delta)R} \cap \omega)}^2 + 
\left(\frac{h^2}{R^2} + \frac{H^2}{R^2}\right)\norm{\nabla u}_{L^2(B_{(1+\delta)R} \cap \omega)}^2 \\
 &\qquad\leq  \left(C_{\rm app}\frac{1+2\delta}{\delta}\left(\frac{h}{R}+\frac{H}{R}\right)\right)^2\triplenorm{u}^2_{h,(1+2\delta)R},
\end{align*}
which concludes the proof.
 \end{proof}

By iterating this approximation result on suitable concentric boxes, we can derive a low-dimensional
subspace in the space $\H_h$ and the bestapproximation in this space converges exponentially, 
which is stated in the following lemma. 

\begin{lemma}\label{cor:lowdimapp}
Let $C_{\rm app}$ be the constant of Lemma~\ref{lem:lowdimapp}.
Let $q,\kappa,R \in (0,1)$, $k \in \mathbb{N}$ 
and $\omega \subseteq \Omega$ of the form \eqref{eq:subset}. Assume 
\begin{equation}
\label{eq:cor:lowdimapp-1}
\frac{h}{R} \leq \frac{\kappa q} {8 k \max\{1,C_{\rm app}\}}.
\end{equation}
Then, there exists a subspace $V_k$ of $S^{p,1}_0(\T_h,\Gamma_D)|_{B_R \cap \omega}$ 
with dimension 
$$
\dim V_k \leq C_{\rm dim} \left(\frac{1 + \kappa^{-1}}{q}\right)^dk^{d+1},
$$
such that for every $u \in \mathcal{H}_{h}(B_{(1+\kappa)R},\omega)$  
\begin{equation}\label{eq:lowdimapp}
\min_{v\in V_k} \triplenorm{u-v}_{h,R} \leq q^{k} \triplenorm{u}_{h,(1+\kappa)R}.
\end{equation}
The constant $C_{\rm dim}>0$ depends only on the boundary value problem \eqref{eq:modelstrong},
 $\Omega,d$, and the $\gamma$-shape regularity of $\mathcal{T}_h$.
\end{lemma}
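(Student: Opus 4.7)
The plan is to prove Lemma~\ref{cor:lowdimapp} by iterating the one-step approximation result of Lemma~\ref{lem:lowdimapp} on a telescoping sequence of $k$ concentric boxes that interpolate between $B_R$ and $B_{(1+\kappa)R}$. Concretely, I would set $R_j := (1+(k-j)\kappa/k)\,R$ for $j=0,\ldots,k$, so that $R_0=(1+\kappa)R$ and $R_k=R$, and choose $\delta_j\in(0,1)$ with $(1+2\delta_j)R_j = R_{j-1}$. A short computation using $\kappa\in(0,1)$ yields $\delta_j \geq \kappa/(4k)$, hence $(1+2\delta_j)/\delta_j \leq 8k/\kappa$ uniformly in $j$. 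For each level $j$, I would pick an auxiliary $\gamma$-shape regular mesh $\mathcal{K}_{H_j}$ of $\mathbb{R}^d$ with mesh width $H_j \sim q\kappa R_j/(k\,C_{\rm app})$, chosen so that the $H_j$-contribution in Lemma~\ref{lem:lowdimapp}(\ref{item:lem:lowdimapp-i}) is at most $q/2$; assumption~\eqref{eq:cor:lowdimapp-1} together with $R_j\geq R$ and $\delta_j\geq\kappa/(4k)$ is then exactly what is needed for the $h$-contribution to also be $\leq q/2$, producing a uniform per-step contraction by the factor $q$.

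With this geometric scaffolding in place, I would carry out the iteration as follows. Starting from $u_0 := u \in \mathcal{H}_h(B_{R_0},\omega)$, at step $j=1,\ldots,k$ I apply Lemma~\ref{lem:lowdimapp} to $u_{j-1}$ with the concentric triple $B_{R_j}\subset B_{(1+\delta_j)R_j}\subset B_{R_{j-1}}$ and mesh $\mathcal{K}_{H_j}$, producing an element $v_j$ of the finite-dimensional space $W_j := \Pi_{h,R_j}J_{H_j}E\,\mathcal{H}_h(B_{R_{j-1}},\omega)$ with
\begin{equation*}
\triplenorm{u_{j-1}-v_j}_{h,R_j} \;\leq\; q\,\triplenorm{u_{j-1}}_{h,R_{j-1}}.
\end{equation*}
Crucially, item~(\ref{item:lem:lowdimapp-ii}) of Lemma~\ref{lem:lowdimapp} ensures that the residual $u_j := (u_{j-1}-v_j)|_{B_{R_j}\cap\omega}$ lies again in $\mathcal{H}_h(B_{R_j},\omega)$, which is exactly what is needed to reapply the lemma at the next level. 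Setting $V_k$ to be the restriction to $B_R\cap\omega$ of $W_1 + \cdots + W_k$, telescoping then gives
\begin{equation*}
\min_{v\in V_k}\triplenorm{u-v}_{h,R} \;\leq\; \triplenorm{u - \sum_{j=1}^k v_j}_{h,R} \;=\; \triplenorm{u_k}_{h,R_k} \;\leq\; q^k\,\triplenorm{u}_{h,(1+\kappa)R}.
\end{equation*}

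The dimension bound then follows from Lemma~\ref{lem:lowdimapp}(\ref{item:lem:lowdimapp-iii}) and the choice of $H_j$: summing $\dim W_j \lesssim (R_{j-1}/H_j)^d \lesssim (k/(q\kappa))^d$ over $j=1,\ldots,k$ gives
\begin{equation*}
\dim V_k \;\lesssim\; k\,\Bigl(\tfrac{k}{q\kappa}\Bigr)^d \;\lesssim\; \Bigl(\tfrac{1+\kappa^{-1}}{q}\Bigr)^d k^{d+1},
\end{equation*}
where the last step uses $(1+\kappa^{-1})/q \geq 1/(q\kappa)$. I expect the main technical obstacle to be the simultaneous calibration of the three parameters $\delta_j$, $H_j$, and the threshold on $h/R$: they must be chosen together so that (i) each step contracts by the same uniform factor $q$, (ii) the admissibility conditions $h/R_j, H_j/R_j \leq \delta_j/4$ required by Lemma~\ref{lem:lowdimapp} hold uniformly in $j$ under assumption~\eqref{eq:cor:lowdimapp-1}, and (iii) the sum of per-step dimensions is of order $k^{d+1}/(q\kappa)^d$ rather than anything worse. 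Everything else is genuinely routine once the already established single-step approximation result is in hand.
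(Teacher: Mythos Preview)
Your proposal is correct and follows essentially the same iterative strategy as the paper's proof: the paper likewise sets $\delta_j = \kappa(k-j)/k$ (so that your $R_j$ coincide with the paper's $(1+\delta_j)R$), applies Lemma~\ref{lem:lowdimapp} at each of the $k$ levels to produce a factor-$q$ contraction, and sums the resulting one-step spaces $W_j$. The only cosmetic differences are that the paper uses a single mesh width $H = \kappa q R/(8k\max\{1,C_{\rm app}\})$ for all steps rather than level-dependent $H_j$, and it treats the borderline case $h \ge H$ separately by taking $V_k = \mathcal{H}_h(B_R,\omega)$ outright; under assumption~\eqref{eq:cor:lowdimapp-1} this case collapses to $h=H$, so your omission of it is harmless.
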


\begin{proof}
We iterate the approximation result of Lemma~\ref{lem:lowdimapp} on boxes $B_{(1+\delta_j)R}$, 
with $\delta_j := \kappa\frac{k-j}{k}$ for $j=0,\dots,k$. 
We note that $\kappa = \delta_0>\delta_1 > \dots >\delta_k = 0$.
We choose $H = \frac{\kappa q R}{8k\max\{C_{\rm app},1\}}$.

 If $h \ge H$, then we select $V_k = \mathcal{H}_h(B_R,\omega)$. 
Due to the choice of $H$ we have 
$\dim V_k \lesssim \left(\frac{R}{h}\right)^d \lesssim k\left(\frac{R}{H}\right)^d \simeq C_{\rm dim} \left(\frac{1 + \kappa^{-1}}{q}\right)^dk^{d+1}$. 

If $h < H$, we apply Lemma~\ref{lem:lowdimapp} with $\widetilde{R}\! =\! (1+\delta_j)R$ and 
$\widetilde{\delta}_j\! =\! \frac{1}{2k(1+\delta_j)}\!<\!\frac{1}{2}$.
Note that $\delta_{j-1} = \delta_j +\frac{1}{k}$ gives $(1+\delta_{j-1})R=(1+2\widetilde{\delta}_j)\widetilde{R}$. 
The assumption $\frac{H}{\widetilde{R}}\leq \frac{1}{8k(1+\delta_j)} = \frac{\widetilde{\delta}_j}{4}$ 
is fulfilled due to our choice of $H$.
For $j=1$, Lemma~\ref{lem:lowdimapp} provides an approximation $w_1$ in a subspace $W_1$ of $\mathcal{H}_h(B_{(1+\delta_1)R},\omega)$ 
with $\dim W_1 \leq C\left(\frac{(1+\kappa)R}{H}\right)^d$ such that
\begin{eqnarray*}
\triplenorm{u-w_1}_{h,(1+\delta_1)R} &\leq& 2C_{\rm app}\frac{H}{(1+\delta_1)R}{\frac{1+2\widetilde{\delta}_1}{\widetilde{\delta}_1}} \triplenorm{u}_{h,(1+\delta_0)R}\\
 &=&4C_{\rm app}\frac{kH}{R}(1+2\widetilde{\delta}_1) \triplenorm{u}_{h,(1+\kappa)R} \leq  q\triplenorm{u}_{h,(1+\kappa)R}.
\end{eqnarray*}
Since $u-w_1 \in \mathcal{H}_h(B_{(1+\delta_1)R},\omega)$, we can use Lemma~\ref{lem:lowdimapp} again 
and get an approximation $w_2$ of $u-w_1$ in a subspace $W_2$ of $\mathcal{H}_h(B_{(1+\delta_2)R},\omega)$ with
$\dim W_2\leq C\left(\frac{(1+\kappa)R}{H}\right)^d$. Arguing as for $j=1$, we get
\begin{equation*}
\triplenorm{u-w_1-w_2}_{h,(1+\delta_2)R} \leq q \triplenorm{u-w_1}_{h,(1+\delta_1)R} \leq  
q^2 \triplenorm{u}_{h,(1+\kappa)R}.
\end{equation*}
Continuing this process $k-2$ times leads to an approximation $v := \sum_{j=1}^kw_i$ 
in the space $V_k := \sum_{j=1}^{k}W_j$ 
of dimension $\dim V_k \leq Ck\left(\frac{(1+\kappa)R}{H}\right)^d=C_{\rm dim} \left(\frac{1 + \kappa^{-1}}{q}\right)^dk^{d+1}$.
 \end{proof}

Now we are able to prove the main result of this section.\\

\begin{proof}[of Proposition~\ref{thm:function-approximation}]
Choose $\kappa = \frac{1}{1+\eta}$. By assumption, we have 
$\dist(B_{R_\tau},B_{R_\sigma}) 
\ge \eta^{-1} \diam B_{R_\tau} = \sqrt{d} \eta^{-1} R_{\tau}$. 
In particular, this implies 
\begin{equation*}
\dist(B_{(1+\kappa) R_\tau},B_{R_\sigma}) \geq \dist(B_{R_\tau},B_{R_\sigma}) - \kappa R_{\tau} \sqrt{d} \geq \sqrt{d}R_{\tau}(\eta^{-1}-\kappa) = \sqrt{d}R_{\tau}\frac{1}{\eta(1+\eta)} >0.
\end{equation*}
The Galerkin solution $\phi_h$ satisfies $\phi_h|_{B_{(1+\delta)R}\cap \Omega} \in \H_h(B_{(1+\delta)R},\Omega)$.
The coercivity \eqref{eq:coercivity} of the bilinear form $a(\cdot,\cdot)$ implies
\begin{eqnarray*}
\norm{\phi_h}_{H^{1}(\Omega)}^2 &\lesssim& a(\phi_h,\phi_h) = \skp{f,\phi_h} = \skp{\Pi^{L^2} f,\phi_h} 
\lesssim \norm{\Pi^{L^2} f}_{L^2(\Omega)}\norm{\phi_h}_{H^1(\Omega)}.
\end{eqnarray*}
Furthermore, with $\frac{h}{R_{\tau}}<1$, we get 
\begin{eqnarray*}
\triplenorm{\phi_h}_{h,(1+\kappa)R_{\tau}} &\lesssim&   \left(1+\frac{1}{R_{\tau}}\right)\norm{\phi_h}_{H^1(\Omega)}
 \lesssim \left(1+\frac{1}{R_{\tau}}\right)\norm{\Pi^{L^2} f}_{L^2(\Omega)} ,
\end{eqnarray*}
and we have a bound on the right-hand side of \eqref{eq:lowdimapp}.
We are now in the position to define the space $V_k$, for which we distinguish two cases. 
\newline
{\bf Case 1:} The condition \eqref{eq:cor:lowdimapp-1} is satisfied with $R = R_{\tau}$. 
With the space $V_k$ provided by Lemma~\ref{cor:lowdimapp} we get
 \begin{eqnarray*}
\min_{v\in V_k}\norm{\phi_h-v}_{L^2(B_{R_{\tau}}\cap\Omega)} &\leq& R_{\tau}\min_{v\in V_k}\triplenorm{\phi_h-v}_{h,R_{\tau}} 
\lesssim (R_{\tau}+1)q^k\norm{\Pi^{L^2}f}_{L^2(\Omega)} \\
 &\lesssim& {\rm diam}(\Omega)q^k\norm{\Pi^{L^2}f}_{L^2(\Omega)},
\end{eqnarray*}
and the dimension of $V_k$ is bounded by $\dim V_k \leq C \left((2 + \eta) q^{-1}\right)^d k^{d+1}.$
\newline
{\bf Case 2:} The condition \eqref{eq:cor:lowdimapp-1} is not satisfied. 
Then, $\frac{h}{R_{\tau}} \geq \frac{\kappa q} {8 k \max\{1,C_{\rm app}\}}$ and we select 
$V_k:= \left\{v|_{B_{R_{\tau}}\cap\Omega} : v \in S^{p,1}_0({\mathcal T}_h,\Gamma_D) \right\}$. 
Then the minimum in \eqref{eq:thm:function-approximation-1} is obviously zero. By choice of $\kappa$, the dimension
of $V_k$ is bounded by 
$$
\dim V_k \lesssim \left(\frac{R_{\tau}}{h}\right)^{d} 
\lesssim \left(\frac{8 k \max\{C_{\rm app},1\}}{\kappa q}\right)^{d}  
\lesssim \left((1+\eta) q^{-1} \right)^{d} k^{d+1}, 
$$
which concludes the proof of the non trivial statement in \eqref{eq:thm:function-approximation-1}.
The other estimate follows directly from the $L^2(\Omega)$-stability of the $L^2(\Omega)$-orthogonal projection.
\end{proof}

\section{The Neumann Problem}
Our techniques employed in the previous chapter can be used to treat 
problems with purely Neumann boundary conditions as well. Our model problem in this case reads in the strong form as 
\bea
Lu:= -\div(\mathbf{C}\nabla u) &=& f \quad \text{in} \; \Omega, \\
\mathbf{C}\nabla u\cdot n &=& 0 \quad \text{on} \; \Gamma.
\eea
With these boundary conditions we observe that the operator $L$ has 
a kernel of dimension one, since it vanishes on constant functions. 
In order to get a uniquely solvable problem, we study the stabilized bilinear form 
$a_{\mathcal{N}} : H^1(\Omega) \times H^1(\Omega)\ra\R$ given by
\bee
a_{\mathcal{N}}(u,v) := \skp{\mathbf{C}\nabla u, \nabla v} + \skp{u,1}\skp{v,1}.
\ee

One way to formulate the finite element method for the Neumann Problem
is to use the discrete Galerkin formulation of finding $\phi_h$ such that
\been\label{eq:modelNeumann} 
a_{\mathcal{N}}(\phi_h,\psi_h) = \skp{f,\psi_h}, \quad \forall \psi_h \in S^{p,1}(\T_h)
\een
for right-hand sides $f \in L^2(\Omega)$ satisfying the solvability condition $\skp{f,1} = 0$.
Using $v\equiv 1$ as a test function the solvability condition leads to $\skp{\phi_h,1} = 0$, 
so using this formulation we derive the unique solution with integral mean zero. \\

With a basis  ${\mathcal B}_h:= \{\psi_j\, :\, j = 1,\dots, N\}$ of $S^{p,1}(\T_h)$, 
we get the symmetric, positive definite stiffness matrix $\mathbf{A}^{\mathcal{N}} \in \R^{N\times N}$ defined by
\bee
\mathbf{A}^{\mathcal{N}}_{jk} = \skp{\mathbf{C}\nabla \psi_j, \nabla \psi_k} + \skp{\psi_j,1}\skp{\psi_k,1}, 
\quad \psi_j,\psi_k \in {\mathcal B}_h,
\ee
One should note that the number $N$ of degrees of freedom is different from the number of degrees
of freedom in the mixed problem \eqref{eq:model}. In order to shorten notation, we will denote both by $N$.

With this stabilization, we have the coercivity
\been\label{eq:coercivityNeumann}
\norm{u}_{H^1(\Omega)}^2\leq C a_{\mathcal{N}}(u,u)
\een
of the bilinear form $a(\cdot,\cdot)$. \\

For an admissible block $(\tau,\sigma)$ and corresponding bounding boxes $B_{R_{\tau}},B_{R_{\sigma}}$ and 
$f \in L^2(\Omega)$ with $\supp f \subset B_{R_{\sigma}}$ we have the orthogonality relation
\been\label{eq:orthoNeumann}
a_{\mathcal{N}}(u,\psi_h) = 0, \quad \forall \psi_h \in S^{p,1}(\T_h) \, \text{with} \;\supp \psi_h \! \subset \! B_{R_{\tau}}.
\een

Since our Galerkin solution has mean zero, we can drop the stabilization term and get 
$\skp{\mathbf{C}\nabla u,\nabla \psi_h}_{L^2(B_{R_{\tau}})} = 0$.
This orthogonality and the zero mean property are captured in the following space 
\begin{eqnarray*}
\mathcal{H}^{\mathcal{N}}_h(D,\omega) &:=& \{u\in H^1(D\cap\omega) \colon \exists \widetilde{u} \in S^{p,1}(\mathcal{T}_h) \;
\mbox{s.t.} \ 
u|_{D\cap\omega} = \widetilde{u}|_{D\cap\omega}, \, \supp \widetilde{u} \subset \ol{\omega}, \\
&& \phantom{u \in }  a_{\mathcal{N}}(u,\psi_h) = 0, 
\; \forall \; \psi_h \in S^{p,1}(\mathcal{T}_h) \,\text{with} \, \supp \psi_h \subset \ol{D}\cap\ol{\omega}\} \\ 
& &  \cap \; \{u\in H^1(\Omega):\skp{u,1}_{L^2(\Omega)} = 0\}.
\end{eqnarray*}
For functions $u\in\mathcal{H}_h^{\mathcal{N}}(B_{(1+2\delta)R},\omega)$
the interior regularity result of Lemma~\ref{lem:Caccioppoli} holds as well, since 
using the orthogonality \eqref{eq:orthoNeumann} and the zero mean condition
 lead to no additional terms in comparison to the orthogonality \eqref{eq:ortho}.
Therefore, we can proceed just as in the previous chapter and derive a low rank approximation
of the Galerkin solution, which is stated in the following proposition.

\begin{proposition}\label{thm:function-approximationNeumann}
Let $(\tau,\sigma)$ be a cluster pair with bounding boxes $B_{R_\tau}$, $B_{R_\sigma}$. 
Assume $ \eta \dist(B_{R_\tau},B_{R_\sigma}) > \diam(B_{R_\tau})$ for some $\eta > 0$. 
Fix $q \in (0,1)$. 
Let $\Pi^{L^2}: L^2(\Omega) \ra S^{p,1}_0(\T_h,\Gamma_D)$ be the $L^2(\Omega)$-orthogonal projection.
Then, for each $k\in\mathbb{N}$ there exists a space
$V_k\subset S^{p,1}({\mathcal T}_h)$ with $\dim V_k\leq C_{\rm dim} (2+\eta)^d q^{-d}k^{d+1}$ 
such that for arbitrary $f \in L^{2}(\Omega)$ with 
$\supp  f  \subset B_{R_\sigma}\cap\Omega$, the solution $\phi_h$ of \eqref{eq:model} 
satisfies 
\begin{equation}
\label{eq:thm:function-approximation-1Neumann}
\min_{v \in V_k} \|\phi_h - v\|_{L^2(B_{R_{\tau}}\cap\Omega)} 
\leq C_{\rm box} q^k \|\Pi^{L^2}f\|_{L^2(\Omega)}
\leq C_{\rm box} q^k \|f\|_{L^2(B_{R_\sigma}\cap\Omega)}.
\end{equation}
The constant $C_{\rm box}>0$ depends only on $\mathbf{C}$ and $\Omega$, while
$C_{\rm dim}>0$ additionally depends on p, d, and the $\gamma$-shape regularity of $\mathcal{T}_h$.
\end{proposition}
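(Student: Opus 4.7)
The plan is to mirror the proof of Proposition~\ref{thm:function-approximation} step by step, with the bilinear form $a(\cdot,\cdot)$ replaced by $a_{\mathcal{N}}(\cdot,\cdot)$, the test space $S^{p,1}_0({\mathcal T}_h,\Gamma_D)$ replaced by $S^{p,1}({\mathcal T}_h)$, and the space $\mathcal{H}_h(D,\omega)$ replaced by $\mathcal{H}^{\mathcal{N}}_h(D,\omega)$. The Galerkin solution $\phi_h$ of \eqref{eq:modelNeumann} with right-hand side supported in $B_{R_\sigma}$ belongs to $\mathcal{H}^{\mathcal{N}}_h(B_{R_\tau},\Omega)$ by virtue of \eqref{eq:orthoNeumann} together with the zero-mean condition $\langle\phi_h,1\rangle=0$ enforced by the stabilized formulation, and admissibility again guarantees $\mathrm{dist}(B_{R_\tau},B_{R_\sigma})>0$.

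First I would re-derive the Caccioppoli-type estimate of Lemma~\ref{lem:Caccioppoli} for $u\in\mathcal{H}^{\mathcal{N}}_h(B_{(1+\delta)R},\omega)$. The proof goes through verbatim with the cut-off construction and the test function $\eta^2 u$: the convection, reaction, and Robin contributions simply vanish, and the only new term one needs to handle is the stabilization contribution $\langle u,1\rangle\langle\eta^2u-J_h(\eta^2u),1\rangle$ arising in $a_{\mathcal{N}}(u,\eta^2u-J_h(\eta^2u))$. This term is identically zero thanks to the defining property $\langle u,1\rangle_{L^2(\Omega)}=0$ of $\mathcal{H}^{\mathcal{N}}_h$, so the orthogonality effectively reduces to the $\mathbf{C}$-part, exactly as in \eqref{eq:ortho}. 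Consequently, the same estimate $\|\nabla u\|_{L^2(B_R\cap\omega)}\lesssim \frac{1+\delta}{\delta}\triplenorm{u}_{h,(1+\delta)R}$ holds, with the constant now depending only on $\mathbf{C}$ and $\Omega$.

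Next I would lift this to the single-step low-dimensional approximation of Lemma~\ref{lem:lowdimapp} and iterate it as in Lemma~\ref{cor:lowdimapp}. The construction $\Pi_{h,R}\,J_H\,E(\eta u)$ is copied verbatim; here $\Pi_{h,R}$ is the $\triplenorm{\cdot}_{h,R}$-orthogonal projection onto the closed subspace $\mathcal{H}^{\mathcal{N}}_h(B_R,\omega)\subset H^1(B_R\cap\omega)$. Linearity of the defining conditions of $\mathcal{H}^{\mathcal{N}}_h$ (including the zero-mean condition) shows that the residual $u-\Pi_{h,R}J_HE(\eta u)$ restricted to $B_R\cap\omega$ lies in $\mathcal{H}^{\mathcal{N}}_h(B_R,\omega)$, so that the iteration on nested concentric boxes goes through and produces a subspace of dimension $C_{\rm dim}((1+\kappa^{-1})/q)^d k^{d+1}$ with exponential bestapproximation error.

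Finally I would replicate the concluding argument of Proposition~\ref{thm:function-approximation}: pick $\kappa=1/(1+\eta)$ so that $B_{(1+\kappa)R_\tau}$ remains disjoint from $B_{R_\sigma}$, bound $\triplenorm{\phi_h}_{h,(1+\kappa)R_\tau}\lesssim(1+R_\tau^{-1})\|\Pi^{L^2}f\|_{L^2(\Omega)}$ via coercivity \eqref{eq:coercivityNeumann} and the identity $\langle f,\phi_h\rangle=\langle\Pi^{L^2}f,\phi_h\rangle$, and then split into the two cases depending on whether the fine-mesh condition \eqref{eq:cor:lowdimapp-1} holds: if it does, apply the Neumann version of Lemma~\ref{cor:lowdimapp}; if it does not, take $V_k=S^{p,1}({\mathcal T}_h)|_{B_{R_\tau}\cap\Omega}$, whose dimension is controlled by $k^{d+1}(1+\eta)^dq^{-d}$. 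The only obstacle worth flagging is the bookkeeping around the global side condition $\langle\cdot,1\rangle=0$: one has to confirm it is preserved under each subtraction and projection step above. This is automatic because the condition is linear and global, and because it enters $a_{\mathcal{N}}$ only through the stabilization term, which is annihilated whenever one of the arguments has zero mean.
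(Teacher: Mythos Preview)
Your proposal is correct and follows exactly the approach the paper takes: the paper's own proof is a two-line remark stating that the Caccioppoli estimate of Lemma~\ref{lem:Caccioppoli} carries over to $\mathcal{H}^{\mathcal{N}}_h$ (because the stabilization term vanishes under the zero-mean condition), so Lemma~\ref{lem:lowdimapp}, Lemma~\ref{cor:lowdimapp}, and the concluding argument of Proposition~\ref{thm:function-approximation} apply verbatim. You have simply spelled out in detail what the paper leaves implicit, including the bookkeeping for the zero-mean side condition, which the paper does not comment on explicitly.
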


\bpro
Since the same Caccioppoli type estimate holds, we get the same approximation result as in Lemma~\ref{lem:lowdimapp},
and we can proceed as in the proof of Proposition~\ref{thm:function-approximation}.
\epro

This approximation result can be transferred to the matrix level exactly
 in the same way as in Section~\ref{sec:H-matrix-approximation}, where the mixed boundary value problem
\eqref{eq:modelstrong} is discussed,
 to derive an $\H$-matrix approximant for the matrix $(\mathbf{A}^{\mathcal{N}})^{-1}$.

\section{Proof of main results}
\label{sec:H-matrix-approximation}
We use the approximation of $\phi_h$ from the low dimensional spaces 
given in Proposition~\ref{thm:function-approximation} 
to construct a blockwise low-rank approximation of $\mathbf{A}^{-1}$ 
and in turn an $\mathcal{H}$-matrix approximation of $\mathbf{A}^{-1}$. 
In fact, we will only use a FEM-isomorphism to transfer Proposition~\ref{thm:function-approximation} 
to the matrix level, which follows the lines of \cite[Theorem 2]{Boerm}. \\

\bpro[of Theorem~\ref{th:blockapprox}]
If $C_{\rm dim} (2+\eta)^d q^{-d} k^{d+1} \geq \min (\abs{\tau},\abs{\sigma})$, we use the exact matrix block 
$\mathbf{X}_{\tau\sigma}=\mathbf{A}^{-1}|_{\tau \times \sigma}$ and $\mathbf{Y}_{\tau\sigma} = \mathbf{I} \in \mathbb{R}^{\abs{\sigma}\times\abs{\sigma}}$. 

If $C_{\rm dim} (2+\eta)^d q^{-d} k^{d+1}  < \min (\abs{\tau},\abs{\sigma})$,
let  $\lambda_i:L^2(\Omega) \rightarrow \mathbb{R}$ be continuous linear functionals on $L^2(\Omega)$
 satisfying $\lambda_i(\psi_j) = \delta_{ij}$.
 We define $\R^{\tau}:= \{\mathbf{x} \in \R^N \; : \; x_i = 0 \; \forall \; i \notin \tau \}$ and the mappings 
 \begin{equation*}
\Lambda_{\tau} : L^2(\Omega) \rightarrow \mathbb{R}^{\tau}, v \mapsto (\lambda_i(v))_{i \in \tau} \;\text{and} 
\; \mathcal{J}_{\tau}: \mathbb{R}^{\tau} \rightarrow S^{p,1}_0({\mathcal T}_h,\Gamma_D),\; \mathbf{x} \mapsto \sum_{j\in\tau} x_j \psi_j.
\end{equation*}
For $\mathbf{x} \in \R^{\tau}$, \eqref{eq:basis} leads to the stability estimate 
\been\label{eq:isomorphism}
h^{d/2}\norm{\mathbf{x}}_2 \lesssim \norm{\mathcal{J}_{\tau}\mathbf{x}}_{L^2(\Omega)} \lesssim h^{d/2}\norm{\mathbf{x}}_2.
\een
Let $V_k$ be the finite dimensional subspace from Proposition~\ref{thm:function-approximation}. 

Because of \eqref{eq:isomorphism} and the $L^2$-stability of $\mathcal{J}_{\mathcal{I}} \Lambda_{\mathcal{I}}$,
the adjoint $\Lambda_{\mathcal{I}}^* : \R^N \ra L^2(\Omega)'$ of 
$\Lambda_{\mathcal{I}}$ satisfies
\bee
\norm{\Lambda_{\mathcal{I}}^* \mathbf{b}}_{L^2(\Omega)} = \sup_{v \in L^2(\Omega)}\frac{\skp{\mathbf{b},\Lambda_{\mathcal{I}} v}_2}{\norm{v}_{L^2(\Omega)}} 
  \lesssim \norm{\mathbf{b}}_2 \sup_{v \in L^2(\Omega)}\frac{h^{-d/2}\norm{\mathcal{J}_{\mathcal{I}}\Lambda_{\mathcal{I}} v}_{L^2(\Omega)}}{\norm{v}_{L^2(\Omega)}} \leq C h^{-d/2}\norm{\mathbf{b}}_2. 
\ee
Moreover, if $\mathbf{b} = (\skp{f,\psi_i})_{i\in {\mathcal{I}}}$, we have 
$(\Lambda_{\mathcal{I}}^* \mathbf{b})(\psi_i) = b_i = \skp{f,\psi_i} = \skp{\Pi^{L^2}f,\psi_i}$. 
Therefore, $f$ and $\Lambda_{\mathcal{I}}^* \mathbf{b} = \Pi^{L^2} f$ have the same 
Galerkin approximation.

Let $V_k$ be the finite dimensional subspace from Proposition~\ref{thm:function-approximation}. 
We define $\mathbf{X}_{\tau\sigma}$ as an orthogonal basis of the space 
$\mathcal{V}_{\tau} := \{\Lambda_{\tau} v \; : \; v \in V_k \}$ and 
$\mathbf{Y}_{\tau\sigma} := \mathbf{A}^{-1}|_{\tau\times \sigma}^{T}\mathbf{X}_{\tau\sigma}$.
 Then, the rank of $\mathbf{X}_{\tau\sigma},\mathbf{Y}_{\tau\sigma}$ 
is bounded by $\dim V_k \leq C_{\rm dim}(2+\eta)^d q^{-d} k^{d+1}$. 

The estimate \eqref{eq:isomorphism} and 
the approximation result from Proposition~\ref{thm:function-approximation} 
provide the error estimate 
\bea
\norm{\Lambda_{\tau} \phi_h - \Lambda_{\tau} v}_2 &\lesssim& h^{-d/2}\norm{\mathcal{J}_{\tau}(\Lambda_{\tau} \phi_h-\Lambda_{\tau} v)}_{L^2(\Omega)} 
\leq h^{-d/2}\norm{\phi_h-v}_{L^2(B_{R_{\tau}}\cap\Omega)} \\
  &\leq& C_{\rm box} h^{-d/2}q^k\norm{\Pi^{L^2} f}_{L^2(\Omega)} \lesssim C_{\rm box} h^{-d}q^k\norm{\mathbf{b}}_{2}.
\eea
Since $\mathbf{X}_{\tau\sigma} \mathbf{X}_{\tau\sigma}^T$ is the orthogonal projection from $\R^N$ onto $\mathcal{V}_{\tau}$, 
we get that $z:=\mathbf{X}_{\tau\sigma} \mathbf{X}_{\tau\sigma}^T \Lambda_{\tau} \phi_h$ 
is the best approximation of $\Lambda_{\tau} \phi_h$ in $\mathcal{V}_{\tau}$ and arrive at 
\bee
\norm{\Lambda_{\tau} \phi_h-z}_2 \leq \norm{\Lambda_{\tau} \phi_h-\Lambda_{\tau} v}_2 \lesssim C_{\rm box} N q^k\norm{\mathbf{b}}_2.
\ee
If we define $\mathbf{Y}_{\tau,\sigma} := \mathbf{A}^{-1}|_{\tau\times \sigma}^{T}\mathbf{X}_{\tau\sigma}$, 
we get $z = \mathbf{X}_{\tau\sigma} \mathbf{Y}_{\tau\sigma}^T \mathbf{b}$, 
since $\Lambda_{\tau} \phi_h = \mathbf{A}^{-1}|_{\tau\times \sigma}\mathbf{b}$.
\epro

The following lemma gives an estimate for the global spectral norm by the local spectral norms, which we will use in combination with Theorem~\ref{th:blockapprox} to derive our main result, Theorem~\ref{th:Happrox}. 

\blem[{\cite[Lemma 6.5.8]{GrasedyckDissertation,Hackbusch}}]\label{lem:spectralnorm}
Let $\mathbf{M} \in \R^{N\times N}$ and $P$ be a partitioning of ${\mathcal{I}}\times {\mathcal{I}}$. Then,
\bee
\norm{\mathbf{M}}_2 \leq C_{\rm sp} \left(\sum_{\ell=0}^{\infty}\max\{\norm{\mathbf{M}|_{\tau\times \sigma}}_2 : (\tau,\sigma) \in P, \level(\tau) = \ell\}\right).
\ee
\elem

Now we are able to prove our main result, Theorem~\ref{th:Happrox}.


\bpro[of Theorem \ref{th:Happrox}]
For each admissible cluster pair $(\tau,\sigma)$, 
Theorem \ref{th:blockapprox} provides matrices $\mathbf{X}_{\tau\sigma} \in \R^{\abs{\tau}\times r}$, $\mathbf{Y}_{\tau\sigma} \in \R^{r \times\abs{\sigma}}$, 
so that we can define the $\H$-matrix $\mathbf{V}_{\H}$ by 
\bee
\mathbf{B}_{\H} = \left\{
\begin{array}{l}
 \mathbf{X}_{\tau\sigma}\mathbf{Y}_{\tau\sigma}^T \quad \;\textrm{if}\hspace{2mm} (\tau,\sigma) \in P_{\text{far}}, \\
 \mathbf{A}^{-1}|_{\tau \times \sigma} \quad \textrm{otherwise}.
 \end{array}
 \right.
\ee
On each admissible block $(\tau,\sigma) \in \Pfar$, we can use the blockwise estimate of Theorem~\ref{th:blockapprox} and get for $q\in(0,1)$
\bee
\norm{(\mathbf{A}^{-1} - \mathbf{B}_{\H})|_{\tau \times \sigma}}_2 \leq C_{\rm apx}N q^k.
\ee
On inadmissible blocks, the error is zero by definition. Therefore, Lemma~\ref{lem:spectralnorm} concludes the proof, since
\bea
\norm{\mathbf{A}^{-1} - \mathbf{B}_{\H}}_2 &\leq& C_{\rm sp} \left(\sum_{\ell=0}^{\infty}\text{max}\{\norm{(\mathbf{A}^{-1} - \mathbf{B}_{\H})|_{\tau \times \sigma}}_2 : (\tau,\sigma) \in P, \level(\tau) = \ell\}\right) \\
 &\leq& C_{\rm apx} C_{\rm sp} N q^k \depth(\mathbb{T}_{\mathcal{I}}).
\eea
Defining $b = -\frac{\ln(q)}{C_{\rm dim}^{1/(d+1)}}q^{d/(d+1)}(2+\eta)^{-d/(1+d)} > 0$, we obtain $q^k = e^{-br^{1/(d+1)}}$ and hence
\begin{equation*}
\norm{\mathbf{A}^{-1} - \mathbf{B}_{\mathcal{H}}}_2 \leq C_{\rm apx}C_{\rm sp} N {\rm depth}(\mathbb{T}_{\mathcal{I}})e^{-br^{1/(d+1)}},
\end{equation*}
which concludes the proof.
\epro

\section{Hierarchical $LU$-decomposition}
\label{sec:hierarchical-LU-decomposition}
In \cite{Bebendorf07} the existence of an (approximate) $\H$-$LU$ decomposition,
i.e., a factorization of the form $\mathbf{A} \approx \mathbf{L}_{\H}\mathbf{U}_{\H}$ 
with lower and upper triangular $\H$-matrices
$\mathbf{L}_{\H}$ and $\mathbf{U}_{\H}$, was proven for finite element matrices $\mathbf{A}$ 
corresponding to the Dirichlet problem
for elliptic operators with $L^{\infty}$-coefficients. In \cite{GrasedyckKriemannLeBorne} this result
was extended to the case, where the block structure of the $\H$-matrix is constructed by domain decomposition 
clustering methods, instead of the standard geometric bisection clustering.

Algorithms for computing an $\H$-$LU$ decomposition have been proposed repeatedly in the literature, e.g., 
\cite{Lintner,Bebendorf05} and numerical evidence for their usefulness put forward; we mention here 
that $\H$-$LU$ decomposition can be employed for black box preconditioning in iterative solvers, 
\cite{Bebendorf05,Grasedyck05,Grasedyck08,leborne-grasedyck06,grasedyck-kriemann-leborne08}.
An existence result for $\H$-$LU$ factorization is then an important step towards a mathematical understanding 
of the good performance of these schemes. 

The main steps in the proof of \cite{Bebendorf07} are to approximate
certain Schur complements of $\mathbf{A}$ by $\H$-matrices
and to show a recursion formula for the Schur complement. 
Using these two observations an approximation of the exact $LU$-factors for the Schur complements, and 
consequently for the whole matrix, can be derived recursively.

Since the construction of the approximate $LU$-factors is completely algebraic, once we know that 
the Schur complements have an $\H$-matrix approximation of arbitrary accuracy, we will show
that we can provide such an approximation and only sketch the remaining steps.
Details can be found in \cite{Bebendorf07,GrasedyckKriemannLeBorne}.

Our main result, Theorem~\ref{th:Happrox}, shows the existence of an $\H$-matrix approximation to the inverse
FEM stiffness matrix with arbitrary accuracy, whereas previous results achieve accuracy up to the finite element error.
In fact, both \cite{Bebendorf07,GrasedyckKriemannLeBorne} assume, in order to derive an $\H$-$LU$ decomposition, that 
approximations to the inverse with arbitrary accuracy exist. 
Thus, due to our main result this assumption is fulfilled for inverse finite element matrices for elliptic
operators with various boundary conditions.   \newline

Since we are in the setting of the Lax-Milgram Lemma, we get that the, in general, non symmetric matrix
$\mathbf{A}$ is positive definite in the sense that $\mathbf{x}^T\mathbf{A}\mathbf{x}>0$ for all $\mathbf{x} \neq 0$.
Therefore, $\mathbf{A}$ has an $LU$-decomposition $\mathbf{A}=\mathbf{L}\mathbf{U}$, 
where $\mathbf{L}$ is a lower triangular matrix and $\mathbf{U}$ is 
an upper triangular matrix, independently of the numbering of the degrees of freedom, i.e., every other 
numbering of the basis functions permits an $LU$-decomposition as well (see, e.g., \cite[Cor.~{3.5.6}]{horn-johnson13}). 
By classical linear algebra (see, e.g., \cite[Cor.~{3.5.6}]{horn-johnson13}), 
this implies that for any $n \leq N$ and index set 
$\rho := \{1,\ldots,n\}$, the matrix $\mathbf{A}|_{\rho \times \rho}$ is invertible. \newline


We start with the approximation of appropriate Schur complements. 

\subsection{Schur complements}

One way to approximate the Schur complement for a finite element matrix
is to follow the lines of \cite{Bebendorf07,GrasedyckKriemannLeBorne}
by using $\H$-arithmetics and the sparsity of the finite element matrix.
We present a different way of deriving such a result, which is more in line with our
procedure in Section~\ref{sec:Approximation-solution}. It relies on interpreting Schur complements 
as FEM stiffness matrices from constrained spaces. 

\blem\label{lem:Schur}
Let $(\tau,\sigma)$ be an admissible cluster pair and $\rho := \{i\in \mathcal{I} : i < \min(\tau\cup\sigma)\}$. 
Define the Schur complement 
$\mathbf{S}(\tau,\sigma) = \mathbf{A}|_{\tau\times\sigma} - \mathbf{A}|_{\tau\times \rho} (\mathbf{A}|_{\rho\times \rho})^{-1}\mathbf{A}|_{\rho\times\sigma}$.
Then, there exists a rank-$r$ matrix $\mathbf{S}_{\H}(\tau,\sigma)$ such that 
\bee
\norm{\mathbf{S}(\tau,\sigma) - \mathbf{S}_{\H}(\tau,\sigma)}_2 \leq C_{\rm sc} h^{-1} e^{-br^{1/(d+1)}}\norm{\mathbf{A}}_2,
\ee
where the constant $C_{\rm sc}>0$ depends only on the boundary value problem \eqref{eq:modelstrong}, $\Omega$,
 $p$, $d$, and the $\gamma$-shape regularity of $\mathcal{T}_h$.
\elem

\bpro
We define $\omega_{\rho} = {\rm interior}\left( \bigcup_{i\in \rho}\supp \psi_i \right) \subset \Omega$ 
and let $B_{R_{\tau}},B_{R_{\sigma}}$ be
bounding boxes for the clusters $\tau$, $\sigma$ with \eqref{eq:admissibility}. 
Our starting point is the observation that the Schur complement matrix
$\mathbf{S}(\tau,\sigma)$ can be understood in terms of an orthogonalization
with respect to the degrees of freedom in $\rho$. That is, 
for $\mathbf{u}\in\R^{\abs{\tau}},\mathbf{w}\in\R^{\abs{\sigma}}$  a direct calculation shows
\been\label{eq:SchurRepresentation}
\mathbf{u}^T \mathbf{S}(\tau,\sigma)\mathbf{w} = a(\widetilde{u},w), 
\een
with $w = \sum_{j=1}^{\abs{\sigma}}\mathbf{w}_j\psi_{j_{\sigma}}$, 
where the index $j_{\sigma}$ denotes the $j$-th basis function corresponding to the cluster $\sigma$,
and the function $\widetilde{u} \in S^{p,1}_0(\T_h,\Gamma_D)$ 
is defined by $\widetilde{u} = \sum_{j=1}^{\abs{\tau}}\mathbf{u}_j\psi_{j_{\tau}} + u_{\rho}$ 
with $\supp u_{\rho} \subset \overline{\omega_{\rho}}$ such that 
\been\label{eq:SchurOrthogonality}
a(\widetilde{u},w) = 0 \quad \forall w \in S^{p,1}_0({\mathcal T}_h,\Gamma_D) \; \text{with}\; 
\supp w \subset \overline{\omega_{\rho}}.
\een
The key to approximate the Schur complement $\mathbf{S}(\tau,\sigma)$ is
to approximate the function $\widetilde{u}$.
We will provide such an approximation by applying the techniques from the previous chapters with the use of the 
orthogonality \eqref{eq:SchurOrthogonality}.

Since $\supp \widetilde{u} \subset B_{R_{\tau}} \cup \overline{\omega_{\rho}}$, 
we get for $w$ with $\supp w \subset B_{R_{\sigma}}$ that
\bee
a(\widetilde{u},w) = a(\widetilde{u}|_{\supp w},w) = a(\widetilde{u}|_{B_{R_{\sigma}}\cap\omega_{\rho}},w).
\ee
Therefore, we only need to approximate $\widetilde{u}$ on the intersection $B_{R_{\sigma}}\cap \omega_{\rho}$.
This support property and the orthogonality \eqref{eq:SchurOrthogonality} imply that 
$\widetilde{u} \in \H_h(B_{(1+\delta)R_{\sigma}},\omega_{\rho})$. 

Therefore, Lemma~\ref{lem:Caccioppoli} can be applied to $\widetilde{u}$. As a consequence,
Lemma~\ref{cor:lowdimapp} provides a low dimensional space $ V_k$, where the choice $\kappa =\frac{1}{\eta+1}$
bounds the dimension of $V_k$ by $\dim V_k \leq C_{\rm dim}(2+\eta)^dq^{-d}k^{d+1}$.
Moreover, the best approximation $\widetilde{v} = \Pi_{V_k}\widetilde{u} \in V_k$
to $\widetilde{u}$ in the space $V_k$ satisfies
\bee
\triplenorm{\widetilde{u}-\widetilde{v}}_{h,(1+\delta)R_{\sigma}} \leq q^k \triplenorm{\widetilde{u}}_{h,(1+\delta)R_{\sigma}}.
\ee
This implies 
\bea
\abs{a(\widetilde{u},w)-a(\widetilde{v},w)}&\lesssim& 
\norm{\widetilde{u}-\widetilde{v}}_{H^1(B_{(1+\delta)R_{\sigma}}\cap\omega_{\rho})}\norm{w}_{H^1(B_{(1+\delta)R_{\sigma}}\cap\Omega)}\\
&\lesssim& \frac{R_{\sigma}}{h} \triplenorm{\widetilde{u}-\widetilde{v}}_{h,(1+\delta)R_{\sigma}}\norm{w}_{H^1(\Omega)} 
\lesssim h^{-1}q^k\norm{\widetilde{u}}_{H^1(\Omega)}\norm{w}_{H^1(\Omega)}.
\eea
Since $\supp(\widetilde{u}-u) = \supp(u_{\rho}) \subset \overline{\omega_{\rho}}$ with $u = \sum_{j=1}^{\abs{\tau}}\mathbf{u}_j\psi_{j_{\tau}}$, 
the coercivity \eqref{eq:coercivity} and orthogonality \eqref{eq:SchurOrthogonality} lead to
\bee
\norm{\widetilde{u}-u}_{H^1(\Omega)}^2\lesssim a(\widetilde{u}-u,\widetilde{u}-u) 
= a(-u,\widetilde{u}-u) \lesssim \norm{u}_{H^1(\Omega)}\norm{\widetilde{u}-u}_{H^1(\Omega)}.
\ee
Consequently, we get with an inverse estimate and \eqref{eq:isomorphism} that
\bea
\abs{a(\widetilde{u},w)-a(\widetilde{v},w)} &\lesssim& h^{-1}q^k\left(\norm{\widetilde{u}-u}_{H^1(\Omega)}+\norm{u}_{H^1(\Omega)}\right)\norm{w}_{H^1(\Omega)} \\
&\lesssim& h^{-1}q^k\norm{u}_{H^1(\Omega)}\norm{w}_{H^1(\Omega)} \lesssim h^{d-3}q^k\norm{\mathbf{u}}_2\norm{\mathbf{w}}_2.
\eea
The linear mapping $\mathcal{E}: u\mapsto\widetilde{v}$ with $\dim \text{ran}\; \mathcal{E} \leq \Cdim (2+\eta)^d q^{-d}k^{d+1}$ 
has a matrix representation $\mathbf{u}\mapsto \mathbf{B}\mathbf{u}$, where the rank of $\mathbf{B}$ is bounded by 
$\Cdim (2+\eta)^d q^{-d}k^{d+1}$. 
Therefore, we get that $a(\mathcal{E}u,w) = \mathbf{u}^T\mathbf{B}^T\mathbf{A}|_{\tau\times\sigma}\mathbf{w}$.
The definition $\mathbf{S_{\H}}(\tau,\sigma) := \mathbf{B}^T\mathbf{A}|_{\tau\times\sigma}$ leads to a 
matrix $\mathbf{S_{\H}}(\tau,\sigma)$ of rank $r \leq \Cdim (2+\eta)^d q^{-d}k^{d+1}$ such that
\bee
\norm{\mathbf{S}(\tau,\sigma)-\mathbf{S}_{\mathcal{H}}(\tau,\sigma)}_2 = 
\sup_{\mathbf{u}\in\R^{\abs{\tau}},\mathbf{w}\in \R^{\abs{\sigma}}} 
\frac{\abs{\mathbf{u}^T(\mathbf{S}(\tau,\sigma)-\mathbf{S}_{\mathcal{H}}(\tau,\sigma))\mathbf{w}}}{\norm{\mathbf{u}}_2\norm{\mathbf{w}}_2} 
\leq C h^{d-3}e^{-br^{1/(d+1)}},
\ee
and the estimate $\frac{1}{\norm{\mathbf{A}}_2}\lesssim h^{2-d}$ from \cite[Theorem 2]{ern-guermond} finishes the proof.
\epro

We refer to the next subsection for the existence of the inverse $\mathbf{S}(\tau,\tau)^{-1}$ of the Schur complement 
$\mathbf{S}(\tau,\tau)$. We proceed to approximate it by blockwise rank-$r$ matrices.  
With the representation of the Schur complement from \eqref{eq:SchurRepresentation}, we get that for a given
right-hand side $f \in L^2(\Omega)$,
solving $\mathbf{S}(\tau,\tau)\mathbf{u} = \mathbf{f}$
with $\mathbf{f} \in \R^{\abs{\tau}}$ defined by $\mathbf f_i = \skp{f,\psi_{i_{\tau}}}$,
is equivalent to solving $a(\widetilde{u},w) = \skp{f,w}$ 
for all $w \in S^{p,1}_0(\T_h,\Gamma_D)$ with $\supp w \subset \overline{\omega_{\tau}}$. 
Let $\tau_1\times\sigma_1 \subset \tau\times\tau$ be an $\eta$-admissible subblock.
For $f \in L^2(\Omega)$ with $\supp f \subset B_{R_{\sigma_1}}$, we get the orthogonality
\bee
a(\widetilde{u},w) = 0 \quad \forall w \in S^{p,1}_0(\T_h,\Gamma_D), 
\supp w \subset B_{R_{\tau_1}}\cap\overline{\omega_{\tau}}.
\ee
Therefore, we have $\widetilde{u} \in \H_h(B_{R_{\tau_1}},\omega_{\tau})$ and
 our results from Section \ref{sec:Approximation-solution} can be applied to approximate $\widetilde{u}$
on $B_{R_{\tau_1}}\cap\omega_{\tau}$. As in Section \ref{sec:H-matrix-approximation}, 
this approximation can be used to construct a rank-$r$ factorization of the subblock 
$\mathbf{S}(\tau,\tau)^{-1}|_{\tau_1\times\sigma_1}$, 
which is stated in the following theorem.

\begin{theorem}
Let $\tau \subset \mathcal{I}$ and $\rho := \{i\in \mathcal{I} : i < \min(\tau)\}$ and
$\tau_1\times\sigma_1 \subset \tau\times\tau$ be $\eta$-admissible.
Define the Schur complement 
$\mathbf{S}(\tau,\tau) = \mathbf{A}|_{\tau\times\tau} - \mathbf{A}|_{\tau\times \rho} (\mathbf{A}|_{\rho\times \rho})^{-1}\mathbf{A}|_{\rho\times\tau}$.
Then, there exist rank-$r$ matrices 
$\mathbf{X}_{\tau_1\sigma_1} \in \R^{\abs{\tau_1}\times r}$, $\mathbf{Y}_{\tau_1\sigma_1} \in \R^{\abs{\sigma_1}\times r}$
such that 
\begin{equation}
\norm{\mathbf{S}(\tau,\tau)^{-1}|_{\tau_1\times\sigma_1} - \mathbf{X}_{\tau_1\sigma_1}\mathbf{Y}_{\tau_1\sigma_1}^T}_2 
\leq C_{\rm apx} N e^{-br^{1/(d+1)}}.
\end{equation}
The constants $C_{\rm apx},b>0$ depend only on the boundary value problem \eqref{eq:modelstrong}, 
$\Omega$, $d$, $p$, and the $\gamma$-shape regularity of $\mathcal{T}_h$.
\end{theorem}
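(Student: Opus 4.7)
The plan is to transplant the proof of Theorem~\ref{th:blockapprox} to the Schur-complement setting, exploiting the variational reinterpretation recorded in the paragraph preceding the theorem. For $\mathbf{b}\in\R^{\abs{\sigma_1}}$, I would pick $f\in L^2(\Omega)$ with $\supp f\subset B_{R_{\sigma_1}}$ such that $\skp{f,\psi_{i_{\sigma_1}}}_{L^2(\Omega)} = \mathbf{b}_i$ for $i\in\sigma_1$ and $\skp{f,\psi_{i_\tau}}_{L^2(\Omega)} = 0$ for $i\in\tau\setminus\sigma_1$, constructed via the adjoint $\Lambda_{\mathcal{I}}^*$ used in the proof of Theorem~\ref{th:blockapprox} so that $\norm{\Pi^{L^2} f}_{L^2(\Omega)} \lesssim h^{-d/2}\norm{\mathbf{b}}_2$. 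The reformulation preceding the theorem then yields a discrete function $\widetilde{u}=u+u_\rho\in S^{p,1}_0(\T_h,\Gamma_D)$, supported in $\overline{\omega_\tau\cup\omega_\rho}$ with $\Lambda_\tau u=\mathbf{S}(\tau,\tau)^{-1}(\skp{f,\psi_{i_\tau}})_{i\in\tau}$, solving $a(\widetilde{u},w)=\skp{f,w}_{L^2(\Omega)}$ for test functions $w$ with $\supp w\subset\overline{\omega_\tau}$ together with the harmonic-extension orthogonality $a(\widetilde{u},w)=0$ for $\supp w\subset\overline{\omega_\rho}$. The $\eta$-admissibility of $(\tau_1,\sigma_1)$ and $\supp f\subset B_{R_{\sigma_1}}$ force the right-hand side to vanish on test functions with $\supp w\subset B_{R_{\tau_1}}\cap\overline{\omega_\tau}$, which, combined with the $\omega_\rho$-orthogonality, places $\widetilde{u}$ in $\H_h(B_{R_{\tau_1}},\omega_\tau)$ in the sense of \eqref{eq:approxspace}.

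From here, Lemma~\ref{cor:lowdimapp} applied with $\omega=\omega_\tau$, $R=R_{\tau_1}$, and $\kappa=1/(1+\eta)$ delivers a subspace $V_k\subset S^{p,1}_0(\T_h,\Gamma_D)|_{B_{R_{\tau_1}}\cap\omega_\tau}$ of dimension at most $C_{\rm dim}(2+\eta)^d q^{-d} k^{d+1}$ satisfying $\min_{v\in V_k}\triplenorm{\widetilde{u}-v}_{h,R_{\tau_1}}\leq q^k \triplenorm{\widetilde{u}}_{h,(1+\kappa)R_{\tau_1}}$. To control the right-hand triple norm by $\norm{\mathbf{b}}_2$, coercivity \eqref{eq:coercivity} tested against $\widetilde{u}=u+u_\rho$ reduces $\norm{\widetilde{u}}^2_{H^1(\Omega)}$ to $\skp{f,u}_{L^2(\Omega)}$, since the $\omega_\rho$-orthogonality annihilates the $u_\rho$-contribution; Cauchy--Schwarz combined with the FEM-isomorphism bound $\norm{u}_{L^2(\Omega)}\simeq h^{d/2}\norm{\Lambda_\tau u}_2$ and the Schur--Rayleigh estimate $\norm{\mathbf{S}(\tau,\tau)^{-1}}_2 \leq \norm{\mathbf{A}^{-1}}_2\lesssim h^{-d}\simeq N$ then yields a workable bound. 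The matrix-level conversion is identical to the proof of Theorem~\ref{th:blockapprox}: take $\mathbf{X}_{\tau_1\sigma_1}$ as an orthogonal basis of $\Lambda_{\tau_1}V_k$, set $\mathbf{Y}_{\tau_1\sigma_1}:=(\mathbf{S}(\tau,\tau)^{-1}|_{\tau_1\times\sigma_1})^T\mathbf{X}_{\tau_1\sigma_1}$, and use $\norm{\mathbf{x}}_2\lesssim h^{-d/2}\norm{\mathcal{J}_{\tau_1}\mathbf{x}}_{L^2(\Omega)}$ to transfer the $L^2$-error into an $\ell^2$-error, contributing a further $h^{-d/2}$ factor so that the combined prefactor becomes $h^{-d}\simeq N$. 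The substitution $q^k = e^{-br^{1/(d+1)}}$ with $b$ chosen as in the proof of Theorem~\ref{th:Happrox} then delivers the exponential rate.

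The main difficulty I expect is verifying rigorously that $\widetilde{u}$ indeed lies in $\H_h(B_{R_{\tau_1}},\omega_\tau)$ in the sense of \eqref{eq:approxspace}: that definition requires a discrete representative whose global support is confined to $\overline{\omega_\tau}$, whereas $\widetilde{u}=u+u_\rho$ has support in the larger set $\overline{\omega_\tau\cup\omega_\rho}$, and basis functions indexed by $\rho$ whose supports straddle the $\omega_\tau$/$\omega_\rho$ interface generally contribute nontrivially inside $\omega_\tau$. A clean remedy is either to weaken the discrete-representative requirement in \eqref{eq:approxspace} and verify that Lemma~\ref{lem:Caccioppoli} and Lemma~\ref{cor:lowdimapp}, whose proofs use this representative only to justify multiplication by cut-offs and Scott--Zhang interpolation, tolerate the enlarged support $\overline{\omega_\tau\cup\omega_\rho}$, or to work with the truncated nodal expansion retaining only basis functions with nodes in $\omega_\tau$ and estimate the interface discrepancy by exploiting the separation between $B_{R_{\tau_1}}$ and the $\rho$-nodes active on $\omega_\tau$. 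Beyond this bookkeeping, the proof is an essentially mechanical adaptation of Theorem~\ref{th:blockapprox} combined with the Schur-complement machinery established in Lemma~\ref{lem:Schur}.
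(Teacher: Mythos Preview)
Your proposal follows the same route as the paper. The paper's ``proof'' is precisely the paragraph preceding the theorem: it identifies the variational reformulation of $\mathbf{S}(\tau,\tau)^{-1}$, observes that for $\supp f\subset B_{R_{\sigma_1}}$ the solution $\widetilde{u}$ satisfies the orthogonality placing it in $\mathcal{H}_h(B_{R_{\tau_1}},\omega_\tau)$, invokes the results of Section~\ref{sec:Approximation-solution}, and then defers the matrix-level translation to Section~\ref{sec:H-matrix-approximation}. Your write-up supplies exactly those missing details.

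Your concern about whether $\widetilde{u}$ genuinely belongs to $\mathcal{H}_h(B_{R_{\tau_1}},\omega_\tau)$ in the strict sense of \eqref{eq:approxspace} is well taken: the paper simply asserts this membership without comment, yet the discrete representative $\widetilde{u}=u+u_\rho$ has support in $\overline{\omega_\tau\cup\omega_\rho}$, not $\overline{\omega_\tau}$, and $\rho$-indexed basis functions can reach into $B_{R_{\tau_1}}\cap\omega_\tau$. The paper does not address this; your first proposed remedy (working instead with $\omega=\omega_{\tau\cup\rho}$ and checking that the combined orthogonalities---against test functions supported in $B_{R_{\tau_1}}\cap\overline{\omega_\tau}$ from the Schur equation and against those supported in $\overline{\omega_\rho}$ from \eqref{eq:SchurOrthogonality}---suffice for the Caccioppoli argument) is the natural fix and is consistent with how the paper uses the space $\mathcal{H}_h(D,\omega)$ for $\omega\ne\Omega$ elsewhere (cf.\ the proof of Lemma~\ref{lem:Schur}, where the enlarged set also arises implicitly). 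This is bookkeeping rather than a new idea, so your proposal is essentially complete and aligned with the paper, but with a more careful accounting of a point the paper leaves implicit.
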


\subsection{Existence of $\H$-$LU$ decomposition}
\label{sec:LU-decomposition}
In this subsection, we will use the approximation of the Schur complement from the previous section
to prove the existence of an (approximate) $\H$-$LU$ decomposition. 
We start with a hierarchical relation of the Schur complements $\mathbf{S}(\tau,\tau)$. \newline 

The Schur complements $\mathbf{S}(\tau,\tau)$ for a block $\tau \in \mathbb{T}_{\mathcal{I}}$ 
can be derived from the Schur complements of its sons by 
\bee
\mathbf{S}(\tau,\tau) = \begin{pmatrix} \mathbf{S}(\tau_1,\tau_1) & \mathbf{S}(\tau_1,\tau_2) \\ 
\mathbf{S}(\tau_2,\tau_1) & \mathbf{S}(\tau_2,\tau_2) + \mathbf{S}(\tau_2,\tau_1)\mathbf{S}(\tau_1,\tau_1)^{-1}\mathbf{S}(\tau_1,\tau_2) \end{pmatrix},
\ee
where $\tau_1,\tau_2$ are the sons of $\tau$.
A proof of this relation can be found in \cite[Lemma 3.1]{Bebendorf07}. One should note that
the proof does not use any properties of the matrix $\mathbf{A}$ other than invertibility and 
existence of an $LU$-decomposition. 
Moreover, we have by definition of $\mathbf{S}(\tau,\tau)$ that $\mathbf{S}(\mathcal{I},\mathcal{I}) = \mathbf{A}$.

If $\tau$ is a leaf, we get the $LU$-decomposition of $\mathbf{S}(\tau,\tau)$ by the classical 
$LU$-decomposition,
which exists since $\mathbf{A}$ has an $LU$-decomposition.
If $\tau$ is not a leaf, we use the hierarchical relation of the Schur complements to
define an $LU$-decomposition of the Schur complement
 $\mathbf{S}(\tau,\tau)$ by 
\been\label{eq:LUdefinition}
\mathbf{L}(\tau) := \begin{pmatrix} \mathbf{L}(\tau_1) & 0 \\ \mathbf{S}(\tau_2,\tau_1)\mathbf{U}(\tau_1)^{-1} & \mathbf{L}(\tau_2)  \end{pmatrix}, \quad 
\mathbf{U}(\tau) := \begin{pmatrix} \mathbf{U}(\tau_1) & \mathbf{L}(\tau_1)^{-1}\mathbf{S}(\tau_1,\tau_2) \\ 0 & \mathbf{U}(\tau_2) \end{pmatrix},
\een
with $\mathbf{S}(\tau_1,\tau_1) = \mathbf{L}(\tau_1)\mathbf{U}(\tau_1)$,
 $\mathbf{S}(\tau_2,\tau_2) = \mathbf{L}(\tau_2)\mathbf{U}(\tau_2)$ and indeed get 
$\mathbf{S}(\tau,\tau) = \mathbf{L}(\tau) \mathbf{U}(\tau)$.
Moreover, the uniqueness of the $LU$-decomposition of $\mathbf{A}$ implies that due to
$\mathbf{L}\mathbf{U} = \mathbf{A} = \mathbf{S}(\mathcal{I},\mathcal{I}) = \mathbf{L}(\mathcal{I})\mathbf{U}(\mathcal{I})$, we have
$\mathbf{L} = \mathbf{L}(\mathcal{I})$ and $\mathbf{U} = \mathbf{U}(\mathcal{I})$.

The existence of the inverses $\mathbf{L}(\tau_1)^{-1}$ and $\mathbf{U}(\tau_1)^{-1}$ 
follows by induction over the levels, since on a leaf the existence is clear and the matrices 
$\mathbf{L}(\tau)$, $\mathbf{U}(\tau)$ are block triangular matrices. Consequently, the inverse of
$\mathbf{S}(\tau,\tau)$ exists. 

Moreover, the restriction of the lower triangular part $\mathbf{S}(\tau_2,\tau_1)\mathbf{U}(\tau_1)^{-1}$ 
of the matrix $\mathbf{L}(\tau)$ to a subblock $\tau_2'\times\tau_1'$
with $\tau_i'$ a son of $\tau_i$ satisfies
\bee
\left(\mathbf{S}(\tau_2,\tau_1)\mathbf{U}(\tau_1)^{-1}\right)|_{\tau_2'\times\tau_1'} = 
\mathbf{S}(\tau_2',\tau_1')\mathbf{U}(\tau_1')^{-1},
\ee
and the upper triangular part of $\mathbf{U}(\tau)$ satisfies a similar relation. \newline

The following Lemma shows that the spectral norm of the inverses 
$\mathbf{L}(\tau)^{-1}$, $\mathbf{U}(\tau)^{-1}$ can be bounded by the norm of the inverses
$\mathbf{L}(\mathcal{I})^{-1}$, $\mathbf{U}(\mathcal{I})^{-1}$.

\begin{lemma}\label{lem:LUnorm}
For $\tau\in \mathbb{T}_{\mathcal{I}}$,
let $\mathbf{L}(\tau)$, $\mathbf{U}(\tau)$ be given by \eqref{eq:LUdefinition}. Then,
\bea
\max_{\tau\in\mathbb{T}_{\mathcal{I}}}\norm{\mathbf{L}(\tau)^{-1}}_2 &=& \norm{\mathbf{L}(\mathcal{I})^{-1}}_2, \\
\max_{\tau\in\mathbb{T}_{\mathcal{I}}}\norm{\mathbf{U}(\tau)^{-1}}_2 &=& \norm{\mathbf{U}(\mathcal{I})^{-1}}_2.
\eea
\end{lemma}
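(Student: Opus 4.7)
The plan is to exploit the block triangular structure of the recursion \eqref{eq:LUdefinition}. Inverting a block lower triangular matrix preserves the triangular structure: if $\tau$ has sons $\tau_1,\tau_2$, then
\begin{equation*}
\mathbf{L}(\tau)^{-1} = \begin{pmatrix} \mathbf{L}(\tau_1)^{-1} & 0 \\ -\mathbf{L}(\tau_2)^{-1}\mathbf{S}(\tau_2,\tau_1)\mathbf{U}(\tau_1)^{-1}\mathbf{L}(\tau_1)^{-1} & \mathbf{L}(\tau_2)^{-1} \end{pmatrix},
\end{equation*}
so $\mathbf{L}(\tau_1)^{-1}$ and $\mathbf{L}(\tau_2)^{-1}$ appear as diagonal submatrices of $\mathbf{L}(\tau)^{-1}$. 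The key observation is that the spectral norm of any submatrix is bounded by the spectral norm of the full matrix: writing the submatrix as $P^T \mathbf{L}(\tau)^{-1} Q$ with $P$, $Q$ selection matrices with orthonormal columns gives $\norm{\mathbf{L}(\tau_i)^{-1}}_2 \leq \norm{\mathbf{L}(\tau)^{-1}}_2$ for $i=1,2$.

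With this observation in hand, the proof becomes a top-down induction on the level of $\tau$ in the cluster tree $\mathbb{T}_{\mathcal{I}}$. The base case, $\tau = \mathcal{I}$, is trivial. For the inductive step, every non-root $\tau \in \mathbb{T}_{\mathcal{I}}$ is a son of some parent cluster $\tau^\ast$, and by the submatrix inequality combined with the induction hypothesis applied to $\tau^\ast$,
\begin{equation*}
\norm{\mathbf{L}(\tau)^{-1}}_2 \leq \norm{\mathbf{L}(\tau^\ast)^{-1}}_2 \leq \norm{\mathbf{L}(\mathcal{I})^{-1}}_2.
\end{equation*}
Taking the maximum over $\tau\in\mathbb{T}_{\mathcal{I}}$ yields $\max_\tau \norm{\mathbf{L}(\tau)^{-1}}_2 \leq \norm{\mathbf{L}(\mathcal{I})^{-1}}_2$; the reverse inequality is immediate since $\mathcal{I}\in\mathbb{T}_{\mathcal{I}}$.

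The argument for $\mathbf{U}(\tau)$ is entirely analogous: the inverse of a block upper triangular matrix is again block upper triangular, with $\mathbf{U}(\tau_1)^{-1}$ and $\mathbf{U}(\tau_2)^{-1}$ on the diagonal. Nothing about the specific form of the off-diagonal blocks $\mathbf{S}(\tau_2,\tau_1)\mathbf{U}(\tau_1)^{-1}$ or $\mathbf{L}(\tau_1)^{-1}\mathbf{S}(\tau_1,\tau_2)$ enters the argument -- only the invertibility of $\mathbf{L}(\tau_i)$ and $\mathbf{U}(\tau_i)$ at every node, which was established earlier in the subsection by a leaves-to-root induction.

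I do not anticipate any real obstacle. The only subtlety to flag is that one should state explicitly that $\mathbf{L}(\tau)^{-1}$ exists for every $\tau \in \mathbb{T}_{\mathcal{I}}$, which has already been noted as a consequence of the invertibility of $\mathbf{A}$ and the hierarchical recursion. The spectral-norm/submatrix bound is standard linear algebra, so the entire proof should fit in a few lines once the block-inverse formula is written down.
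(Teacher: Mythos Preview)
Your proposal is correct and follows essentially the same route as the paper: write down the block-triangular inverse, observe that $\mathbf{L}(\tau_i)^{-1}$ sits as a diagonal block of $\mathbf{L}(\tau)^{-1}$, deduce $\norm{\mathbf{L}(\tau_i)^{-1}}_2 \le \norm{\mathbf{L}(\tau)^{-1}}_2$, and iterate up to the root. The only cosmetic difference is that the paper establishes the parent--child inequality by choosing test vectors supported on $\tau_2$ (and transposing to handle $\tau_1$), whereas you invoke the general submatrix bound $\norm{P^T M Q}_2 \le \norm{M}_2$ directly; the content is identical.
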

\begin{proof}
We only show the result for $\mathbf{L}(\tau)$. With the block structure of \eqref{eq:LUdefinition} we get the inverse
\bee
\mathbf{L}(\tau)^{-1} = \begin{pmatrix} \mathbf{L}(\tau_1)^{-1} & 0 \\ -\mathbf{L}(\tau_2)^{-1} \mathbf{S}(\tau_2,\tau_1)\mathbf{U}(\tau_1)^{-1}\mathbf{L}(\tau_1)^{-1} & \mathbf{L}(\tau_2)^{-1} \end{pmatrix}.
\ee
So, we get by choosing $\mathbf{x}$ such that $\mathbf{x}_i = 0$ for $i \in \tau_1$ that
\bea
\norm{\mathbf{L}(\tau)^{-1}}_2 = \sup_{\mathbf{x}\in \R^{\abs{\tau}},\norm{x}_2=1}\norm{\mathbf{L}(\tau)^{-1}\mathbf{x}}_2 
\geq \sup_{\mathbf{x}\in \R^{\abs{\tau_2}},\norm{x}_2=1}\norm{\mathbf{L}(\tau_2)^{-1}\mathbf{x}}_2 
= \norm{\mathbf{L}(\tau_2)^{-1}}_2.
\eea
The same argument for $\left(\mathbf{L}(\tau)^{-1}\right)^T$ leads to 
\bea
\norm{\mathbf{L}(\tau)^{-1}}_2 = \norm{\left(\mathbf{L}(\tau)^{-1}\right)^T}_2 \geq \norm{\mathbf{L}(\tau_1)^{-1}}_2.
\eea
Thus, we have $\norm{\mathbf{L}(\tau)^{-1}}_2 \geq \max_{i=1,2}\norm{\mathbf{L}(\tau_1)^{-1}}_2$ and as a consequence
$\max_{\tau\in\mathbb{T}_{\mathcal{I}}}\norm{\mathbf{L}(\tau)^{-1}}_2 = \norm{\mathbf{L}(\mathcal{I})^{-1}}_2$.
\end{proof}

We can now formulate the existence result for an $\H$-$LU$ decomposition.

\begin{theorem}\label{th:HLU}
Let $\mathbf{A} = \mathbf{L}\mathbf{U}$ with $\mathbf{L},\mathbf{U}$ being lower and upper triangular matrices.
There exist lower and upper triangular blockwise rank-$r$ matrices $\mathbf{L_{\H}},\mathbf{U_{\H}}$ such that 
\bean
\norm{\mathbf{A}-\mathbf{L_{\H}}\mathbf{U_{\H}}}_2 &\leq& \Big(C_{\rm LU} h^{-1} {\rm depth}(\mathbb{T}_{\mathcal{I}}) e^{-br^{1/(d+1)}} \\
 & & \;\; + C_{\rm LU}^2 h^{-2} {\rm depth}(\mathbb{T}_{\mathcal{I}})^2 e^{-2br^{1/(d+1)}}\Big)\norm{\mathbf{A}}_2, \nonumber
\eean
where $C_{\rm LU} = C_{\rm sp}C_{\rm apx}(\kappa_2(\mathbf{U})+\kappa_2(\mathbf{L}))$, with the constant $C_{\rm apx}$ 
from Theorem~\ref{th:blockapprox} and the spectral condition numbers $\kappa_2(\mathbf{U})$, $\kappa_2(\mathbf{L})$. 
\end{theorem}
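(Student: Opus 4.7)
The plan is to build $\mathbf{L}_\H$ and $\mathbf{U}_\H$ by recursively following the block decomposition in~\eqref{eq:LUdefinition}: on each level of $\mathbb{T}_\I$, the off-diagonal blocks of $\mathbf{L}(\tau)$ and $\mathbf{U}(\tau)$ are of the form $\mathbf{S}(\tau_2,\tau_1)\mathbf{U}(\tau_1)^{-1}$ and $\mathbf{L}(\tau_1)^{-1}\mathbf{S}(\tau_1,\tau_2)$. The observation noted right after~\eqref{eq:LUdefinition} shows that the restriction to an admissible subblock $\tau_2'\times\tau_1'$ (with $\tau_i'$ a son of $\tau_i$) equals $\mathbf{S}(\tau_2',\tau_1')\mathbf{U}(\tau_1')^{-1}$. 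For each admissible block in the partition $P$, I would replace $\mathbf{S}(\tau_2',\tau_1')$ by the rank-$r$ approximant $\mathbf{S}_\H(\tau_2',\tau_1')$ supplied by Lemma~\ref{lem:Schur}, and multiply by the exact $\mathbf{U}(\tau_1')^{-1}$ (or $\mathbf{L}(\tau_1')^{-1}$); since rank is preserved under multiplication, this produces a blockwise rank-$r$ matrix, and the result is lower/upper triangular by construction.

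Next I would bound $\|\mathbf{L}-\mathbf{L}_\H\|_2$ and $\|\mathbf{U}-\mathbf{U}_\H\|_2$. On each admissible subblock,
\begin{equation*}
\|(\mathbf{L}-\mathbf{L}_\H)|_{\tau_2'\times\tau_1'}\|_2 = \|(\mathbf{S}(\tau_2',\tau_1')-\mathbf{S}_\H(\tau_2',\tau_1'))\mathbf{U}(\tau_1')^{-1}\|_2 \leq C_{\rm sc}h^{-1}e^{-br^{1/(d+1)}}\|\mathbf{A}\|_2\,\|\mathbf{U}(\tau_1')^{-1}\|_2,
\end{equation*}
and Lemma~\ref{lem:LUnorm} replaces $\|\mathbf{U}(\tau_1')^{-1}\|_2$ by $\|\mathbf{U}^{-1}\|_2$. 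On inadmissible blocks the error vanishes by construction. Lemma~\ref{lem:spectralnorm} then assembles the global bound
\begin{equation*}
\|\mathbf{L}-\mathbf{L}_\H\|_2 \leq C_{\rm sp}C_{\rm sc}h^{-1}\,{\rm depth}(\mathbb{T}_\I)\,e^{-br^{1/(d+1)}}\,\|\mathbf{A}\|_2\,\|\mathbf{U}^{-1}\|_2,
\end{equation*}
with the analogous estimate for $\|\mathbf{U}-\mathbf{U}_\H\|_2$ involving $\|\mathbf{L}^{-1}\|_2$.

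For the factorization error, I would use the splitting
\begin{equation*}
\mathbf{A}-\mathbf{L}_\H\mathbf{U}_\H = (\mathbf{L}-\mathbf{L}_\H)\mathbf{U} + \mathbf{L}(\mathbf{U}-\mathbf{U}_\H) - (\mathbf{L}-\mathbf{L}_\H)(\mathbf{U}-\mathbf{U}_\H).
\end{equation*}
The first two terms produce factors $\|\mathbf{U}^{-1}\|_2\|\mathbf{U}\|_2 = \kappa_2(\mathbf{U})$ and $\|\mathbf{L}\|_2\|\mathbf{L}^{-1}\|_2 = \kappa_2(\mathbf{L})$ after bounding $\|\mathbf{L}\|_2,\|\mathbf{U}\|_2$ by their respective norms; their sum yields the linear term $C_{\rm LU}h^{-1}\,{\rm depth}(\mathbb{T}_\I)\,e^{-br^{1/(d+1)}}\|\mathbf{A}\|_2$. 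The cross term contributes the quadratic term with $C_{\rm LU}^2$, after using $\|\mathbf{A}\|_2\|\mathbf{L}^{-1}\|_2\|\mathbf{U}^{-1}\|_2 \leq \kappa_2(\mathbf{L})\kappa_2(\mathbf{U})/\|\mathbf{A}\|_2\cdot\|\mathbf{A}\|_2$ and absorbing the factors into the definition of $C_{\rm LU}$.

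The main obstacle I anticipate is of bookkeeping rather than of principle: one must verify that replacing the exact Schur blocks by their low-rank approximants on every level produces a globally well-defined lower/upper triangular blockwise rank-$r$ matrix compatible with the block partition $P$, and that the local error on a child level is not amplified by subsequent recursive substitutions. This is precisely the algebraic machinery worked out in~\cite{Bebendorf07,GrasedyckKriemannLeBorne}, so I would invoke it rather than re-derive it, and concentrate on checking that the resulting blockwise error bounds fit into Lemma~\ref{lem:spectralnorm} to give the stated global estimate.
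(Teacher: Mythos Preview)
Your proposal is correct and follows essentially the same route as the paper: approximate each admissible subblock of $\mathbf{L}$ and $\mathbf{U}$ by $\mathbf{S}_\H(\tau',\sigma')\mathbf{U}(\sigma')^{-1}$ (resp.\ $\mathbf{L}(\tau')^{-1}\mathbf{S}_\H(\tau',\sigma')$), invoke Lemma~\ref{lem:LUnorm} to control $\|\mathbf{U}(\sigma')^{-1}\|_2$ uniformly, assemble via Lemma~\ref{lem:spectralnorm}, and finish with the splitting $\mathbf{A}-\mathbf{L}_\H\mathbf{U}_\H = (\mathbf{L}-\mathbf{L}_\H)\mathbf{U} + \mathbf{L}(\mathbf{U}-\mathbf{U}_\H) - (\mathbf{L}-\mathbf{L}_\H)(\mathbf{U}-\mathbf{U}_\H)$.

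One remark on your final paragraph: the worry about recursive error amplification is unnecessary here. The construction of $\mathbf{L}_\H$ and $\mathbf{U}_\H$ is \emph{direct}, not recursive---you simply overwrite each admissible block of the exact $\mathbf{L}$ and $\mathbf{U}$ with its rank-$r$ approximant and keep the remaining (near-field and diagonal) blocks exact. The recursion~\eqref{eq:LUdefinition} is used only to \emph{identify} what the admissible subblocks of $\mathbf{L}$ are (namely $\mathbf{S}(\tau_2',\tau_1')\mathbf{U}(\tau_1')^{-1}$), not to build $\mathbf{L}_\H$ level by level from approximate ingredients. Hence no error propagation occurs, and the machinery of~\cite{Bebendorf07,GrasedyckKriemannLeBorne} need not be invoked for this step.
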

\bpro
With Lemma~\ref{lem:Schur}, we get a low rank approximation of an admissible subblock $\tau'\times\sigma'$ of 
the lower triangular part of $\mathbf{L}(\tau)$ by
\bea
\norm{\mathbf{S}(\tau,\sigma)\mathbf{U}(\sigma)^{-1}|_{\tau'\times\sigma'}\! - \! \mathbf{S}_{\H}(\tau',\sigma')\mathbf{U}(\sigma')^{-1}}_2 \!\! &=& \!\!
\norm{\mathbf{S}(\tau',\sigma')\mathbf{U}(\sigma')^{-1} - \mathbf{S}_{\H}(\tau',\sigma')\mathbf{U}(\sigma')^{-1}}_2 \\
&\leq&\!\! C_{\rm apx} h^{-1}  e^{-br^{1/(d+1)}}\norm{\mathbf{U}(\sigma')^{-1}}_2\norm{\mathbf{A}}_2.
\eea 
Since $\mathbf{S}_{\H}(\tau',\sigma')\mathbf{U}(\sigma')^{-1}$ is a rank-$r$ matrix, Lemma~\ref{lem:spectralnorm}
immediately provides an $\H$-matrix approximation $\mathbf{L}_{\H}$
of the $LU$-factor $\mathbf{L}(\mathcal{I}) = \mathbf{L}$. Therefore, with Lemma~\ref{lem:LUnorm} 
we get
\bee
\norm{\mathbf{L}-\mathbf{L}_{\H}}_2\leq C_{\rm apx} C_{\rm sp}h^{-1}\depth(\mathbb{T}_{\mathcal{I}})e^{-br^{1/(d+1)}}
\norm{\mathbf{U}^{-1}}_2\norm{\mathbf{A}}_2
\ee
and in the same way an $\H$-matrix approximation $\mathbf{U}_{\H}$ of $\mathbf{U}(\mathcal{I}) = \mathbf{U}$ with
\bee
\norm{\mathbf{U}-\mathbf{U}_{\H}}_2\leq C_{\rm apx} C_{\rm sp}h^{-1} \depth(\mathbb{T}_{\mathcal{I}})e^{-br^{1/(d+1)}} 
\norm{\mathbf{L}^{-1}}_2\norm{\mathbf{A}}_2.
\ee
Since $\mathbf{A}=\mathbf{L}\mathbf{U}$, the triangle inequality finally leads to
\bea
\norm{\mathbf{A}-\mathbf{L}_{\H}\mathbf{U}_{\H}}_2 &\leq& \norm{\mathbf{L}-\mathbf{L}_{\H}}_2\norm{\mathbf{U}}_2 + \norm{\mathbf{U}-\mathbf{U}_{\H}}_2 \norm{\mathbf{L}}_2 + \norm{\mathbf{L}-\mathbf{L}_{\H}}_2\norm{\mathbf{U}-\mathbf{U}_{\H}}_2 \\
&\lesssim& \left(\kappa_2(\mathbf{U})+\kappa_2(\mathbf{L})\right)\depth(\mathbb{T}_{\mathcal{I}})h^{-1} e^{-br^{1/(d+1)}}\norm{\mathbf{A}}_2\\
 & & +\kappa_2(\mathbf{U}) \kappa_2(\mathbf{L})\depth(\mathbb{T}_{\mathcal{I}})^2h^{-2} e^{-2br^{1/(d+1)}}\frac{\norm{\mathbf{A}}_2^2}{\norm{\mathbf{L}}_2\norm{\mathbf{U}}_2},
\eea
and the estimate $\norm{\mathbf{A}}_2 \leq \norm{\mathbf{L}}_2\norm{\mathbf{U}}_2$ finishes the proof.
\epro

In the symmetric case, we may use the weaker admissibility condition \eqref{eq:minadmissible}
instead of \eqref{eq:admissibility} and obtain a result analogously to that of Theorem~\ref{th:HLU}
for the Cholesky decomposition.

\begin{corollary}
Let $\mathbf{b} = \mathbf{0}$ in \eqref{eq:ellipticoperatorLO}
so that the resulting Galerkin matrix $\mathbf{A}$ is symmetric and positive definite. 
Let $\mathbf{A} = \mathbf{C}\mathbf{C}^T$ with $\mathbf{C}$ being a lower triangular matrix
with positive diagonal entries $\mathbf{C}_{jj} > 0$.
There exists a lower triangular blockwise rank-$r$ matrix $\mathbf{C_{\H}}$ such that 
\bean
\norm{\mathbf{A}-\mathbf{C_{\H}}\mathbf{C_{\H}}^T}_2 &\leq& \Big(C_{\rm Ch} h^{-1} {\rm depth}(\mathbb{T}_{\mathcal{I}}) e^{-br^{1/(d+1)}} \\
 & & \;\; + C_{\rm Ch}^2 h^{-2} {\rm depth}(\mathbb{T}_{\mathcal{I}})^2 e^{-2br^{1/(d+1)}}\Big)\norm{\mathbf{A}}_2, \nonumber
\eean
where $C_{\rm Ch} = 2C_{\rm sp}C_{\rm apx}\sqrt{\kappa_2(\mathbf{A})}$, with the constant $C_{\rm apx}$ 
from Theorem~\ref{th:blockapprox} and the spectral condition number $\kappa_2(\mathbf{A})$. 
\end{corollary}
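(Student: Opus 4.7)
The plan is to mimic the proof of Theorem~\ref{th:HLU}, exploiting the fact that in the symmetric case the two triangular factors collapse into a single one. Since $\mathbf{b}=\mathbf{0}$ the bilinear form $a(\cdot,\cdot)$ is symmetric, so the matrix $\mathbf{A}$ is symmetric and positive definite, and the standard $LU$-decomposition coincides with the Cholesky decomposition $\mathbf{A}=\mathbf{C}\mathbf{C}^T$. Crucially, every Schur complement $\mathbf{S}(\tau,\tau)$ inherits symmetry and positive definiteness from $\mathbf{A}$, so the recursion \eqref{eq:LUdefinition} can be written as
\begin{equation*}
\mathbf{C}(\tau)=\begin{pmatrix} \mathbf{C}(\tau_1) & 0 \\ \mathbf{S}(\tau_2,\tau_1)\mathbf{C}(\tau_1)^{-T} & \mathbf{C}(\tau_2)\end{pmatrix},
\end{equation*}
and $\mathbf{S}(\tau_1,\tau_2)=\mathbf{S}(\tau_2,\tau_1)^T$, so only the lower triangular blocks need to be approximated.

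First, I would apply Lemma~\ref{lem:Schur} to each admissible subblock $\tau'\times\sigma'$ of the off-diagonal part $\mathbf{S}(\tau_2,\tau_1)\mathbf{C}(\tau_1)^{-T}$ exactly as in the proof of Theorem~\ref{th:HLU}, obtaining a rank-$r$ approximation with error bounded by $C_{\rm apx}h^{-1}e^{-br^{1/(d+1)}}\,\|\mathbf{C}(\tau_1)^{-T}\|_2\,\|\mathbf{A}\|_2$. Here the symmetric version of the results (as stated in the corollary following Theorem~\ref{th:Happrox}) allows us to use the weaker admissibility condition \eqref{eq:minadmissible}. Assembling these blockwise bounds across levels via Lemma~\ref{lem:spectralnorm} and using Lemma~\ref{lem:LUnorm} (replacing $\mathbf{U}(\tau)$ by $\mathbf{C}(\tau)^T$ in the same derivation) yields a lower triangular blockwise rank-$r$ matrix $\mathbf{C}_{\H}$ with
\begin{equation*}
\norm{\mathbf{C}-\mathbf{C}_{\H}}_2 \leq C_{\rm apx}C_{\rm sp}h^{-1}\,{\rm depth}(\mathbb{T}_{\mathcal{I}})\,e^{-br^{1/(d+1)}}\,\norm{\mathbf{C}^{-1}}_2\,\norm{\mathbf{A}}_2.
\end{equation*}

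Second, I would pass from an approximation of $\mathbf{C}$ to one of $\mathbf{A}$. Writing
\begin{equation*}
\mathbf{C}\mathbf{C}^T-\mathbf{C}_{\H}\mathbf{C}_{\H}^T=\mathbf{C}(\mathbf{C}-\mathbf{C}_{\H})^T+(\mathbf{C}-\mathbf{C}_{\H})\mathbf{C}_{\H}^T
\end{equation*}
and applying the triangle inequality together with $\|\mathbf{C}_{\H}\|_2\leq \|\mathbf{C}\|_2+\|\mathbf{C}-\mathbf{C}_{\H}\|_2$, the quantity $\norm{\mathbf{A}-\mathbf{C}_{\H}\mathbf{C}_{\H}^T}_2$ is controlled by $2\|\mathbf{C}\|_2\|\mathbf{C}-\mathbf{C}_{\H}\|_2+\|\mathbf{C}-\mathbf{C}_{\H}\|_2^2$. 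Since $\mathbf{A}=\mathbf{C}\mathbf{C}^T$ is symmetric positive definite, $\|\mathbf{C}\|_2=\sqrt{\|\mathbf{A}\|_2}$ and $\|\mathbf{C}^{-1}\|_2=\sqrt{\|\mathbf{A}^{-1}\|_2}$, so $\|\mathbf{C}\|_2\|\mathbf{C}^{-1}\|_2=\sqrt{\kappa_2(\mathbf{A})}$. Inserting the bound for $\norm{\mathbf{C}-\mathbf{C}_{\H}}_2$ produces precisely the linear term $C_{\rm Ch}h^{-1}{\rm depth}(\mathbb{T}_{\mathcal{I}})e^{-br^{1/(d+1)}}\norm{\mathbf{A}}_2$ (with the factor of $2$ from the cross terms absorbed into the constant $C_{\rm Ch}=2C_{\rm sp}C_{\rm apx}\sqrt{\kappa_2(\mathbf{A})}$) and the quadratic term, which finishes the proof.

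The only non-routine point is verifying that the blockwise construction preserves lower triangularity and actually reproduces the Cholesky recursion, i.e., that one may indeed take $\mathbf{C}_{\H}$ rather than two independent $\mathbf{L}_{\H},\mathbf{U}_{\H}$ with $\mathbf{U}_{\H}=\mathbf{L}_{\H}^T$. This follows because the Schur complements on which Lemma~\ref{lem:Schur} operates are symmetric, so the approximation of $\mathbf{S}(\tau_2,\tau_1)\mathbf{C}(\tau_1)^{-T}$ and its transpose (which would appear in $\mathbf{U}_{\H}$) are transposes of one another by construction; hence setting $\mathbf{U}_{\H}=\mathbf{C}_{\H}^T$ is automatic and no additional loss is incurred. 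Beyond this structural observation, the estimate is nothing more than the symmetric specialization of Theorem~\ref{th:HLU} with $\kappa_2(\mathbf{L})=\kappa_2(\mathbf{U})=\sqrt{\kappa_2(\mathbf{A})}$.
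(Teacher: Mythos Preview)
Your proposal is correct and follows essentially the same approach as the paper: the paper's proof simply observes that symmetry and positive definiteness of $\mathbf{A}$ force $\mathbf{U}(\tau)=\mathbf{C}(\tau)^T$ in the recursion \eqref{eq:LUdefinition}, and then invokes Theorem~\ref{th:HLU} together with $\|\mathbf{A}\|_2=\|\mathbf{C}\|_2^2$ and $\kappa_2(\mathbf{C})=\sqrt{\kappa_2(\mathbf{A})}$. Your write-up is more detailed in that you spell out the triangle-inequality step and the tracking of constants, but the logic is the same specialization of Theorem~\ref{th:HLU} to the Cholesky case.
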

\bpro
Since $\mathbf{A}$ is symmetric and positive definite, the Schur complements $\mathbf{S}(\tau,\tau)$ 
are symmetric and positive definite as well and 
therefore we get $\mathbf{U}(\tau) = \mathbf{C}(\tau)^T$ in \eqref{eq:LUdefinition}.
Moreover, we have $\norm{\mathbf{A}}_2=\norm{\mathbf{C}}_2^2$
and $\kappa_2(\mathbf{C}) = \norm{\mathbf{C}^{-1}}_2\norm{\mathbf{C}}_2 = \sqrt{\kappa_2(\mathbf{A})}$.
\epro

\section{Numerical Examples}
In this section, we present some numerical examples in two and three dimensions to confirm
our theoretical estimates derived in the previous sections.
Since numerical examples for the Dirichlet case have been studied before, e.g. in \cite{GrasedyckDissertation,BebendorfHackbusch}, 
we will focus on mixed Dirichlet-Neumann and pure Neumann problems in two and three dimensions. 

With the choice $\eta=2$ for the admissibility parameter in \eqref{eq:admissibility},
the clustering is done by the standard geometric clustering algorithm,
i.e., by splitting bounding boxes in half until they are admissible or smaller than the constant $n_{\text{leaf}}$,
which we choose as $n_{\text{leaf}} = 25$ for our computations.
An approximation to the inverse Galerkin matrix is computed by using the bestapproximation via singular value
decomposition. 
Throughout, we use the C-library HLiB \cite{HLib} developed at the 
Max-Planck-Institute for Mathematics in the Sciences.\\ 

\subsection{2D-Diffusion}
As a model geometry, we consider the unit square 
$\Omega = (0,1)^2$.
The boundary $\Gamma = \partial \Omega$ is divided into the
 Neumann part $\Gamma_{D} := \{\mathbf{x} \in \Gamma \,:\, \mathbf{x}_1 = 0 \vee \mathbf{x}_2 = 0 \}$ and the  
Dirichlet part $\Gamma_{\mathcal{N}} = \Gamma \backslash \overline{\Gamma_{D}}$. 
We consider the bilinear form $a(\cdot,\cdot):H^1_0(\Omega,\Gamma_D)\times H^1_0(\Omega,\Gamma_D) \ra \R$
corresponding to the mixed Dirichlet-Neumann Poisson problem
\bean\label{eq:numericsMixed}
a(u,v) := \skp{\nabla u,\nabla v}_{L^2(\Omega)}
\eean
and use a lowest order Galerkin discretization in $S^{1,1}_0(\T_h,\Gamma_D)$. 

As a second example, we study pure Neumann boundary conditions, i.e. $\Gamma =\Gamma_{\mathcal{N}}$,
and use the bilinear form $a_{\mathcal{N}}(\cdot,\cdot): H^1(\Omega) \times H^1(\Omega)\ra \R$
corresponding to the stabilized Neumann Poisson problem 
\been\label{eq:numericsNeumann}
a_{\mathcal{N}}(u,v) := \skp{\nabla u, \nabla v} + \skp{u,1}\skp{v,1}
\een
and a lowest order Galerkin discretization in $S^{1,1}(\T_h)$. \\

In Figure \ref{fig:2DDiff}, we compare the decrease of the upper bound 
$\norm{\mathbf{I}-\mathbf{A}\mathbf{B}_{\H}}_2$ of the relative error with the increase 
in the block-rank for a fixed number $N = 262.144$ of degrees of freedom, where 
the largest block of $\mathbf{B}_{\H}$ has a size of 32.768.

\begin{figure}[h]
\begin{center}
\psfrag{Error}[c][c]{%
 \footnotesize  Error}
\psfrag{Block rank r }[c][c]{%
 \footnotesize  Block rank r}
\psfrag{asd}[l][l]{\footnotesize $\exp(-1.2\, r)$}
\psfrag{jkl}[l][l]{\footnotesize $\norm{I-AB_{\H}}_2$}
\includegraphics[width=0.48\textwidth]{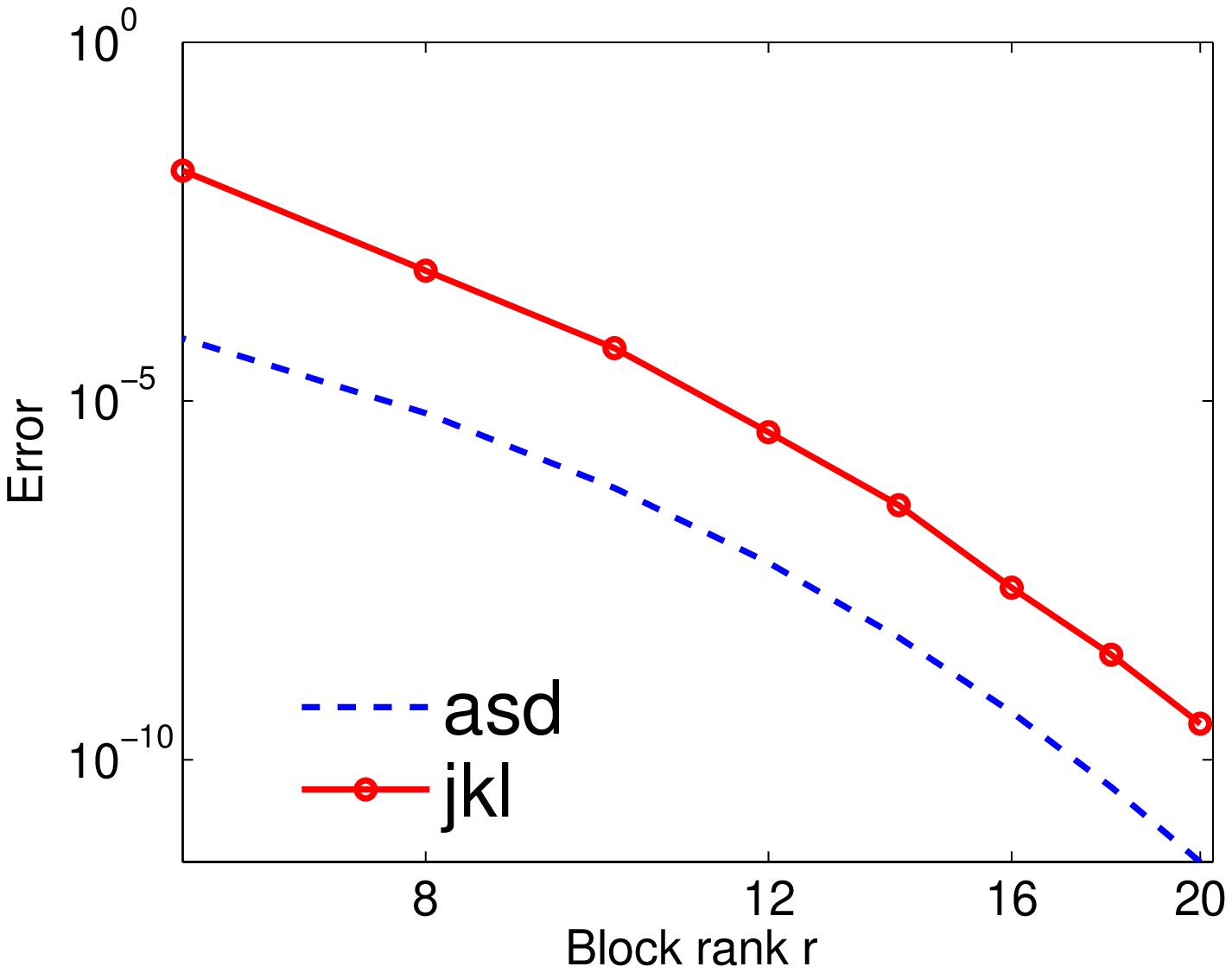}
\psfrag{asd}[l][l]{\footnotesize $\exp(-1.3\, r)$}
\includegraphics[width=0.48\textwidth]{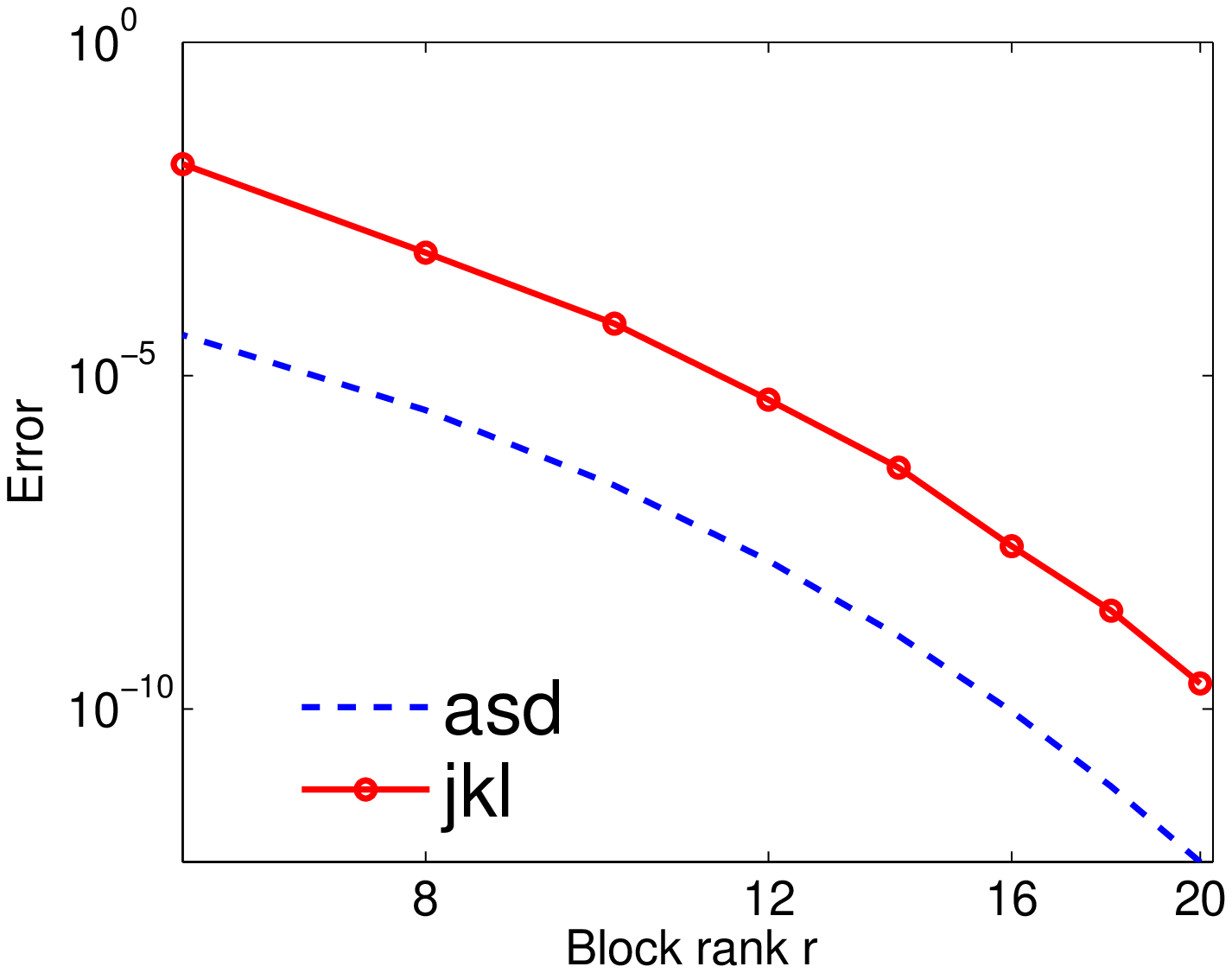}
\caption{\footnotesize Mixed boundary value problem (left), pure Neumann boundary value problem (right) in 2D.}
\label{fig:2DDiff}
\end{center}
\end{figure}

As one can see, we observe exponential convergence in the block rank, where the convergence rate 
is $\exp(-br)$, which is even faster than the rate of $\exp(-br^{1/3})$ guaranteed by Theorem~\ref{th:Happrox}.

\subsection{3D-Diffusion}
For our three dimensional example, we consider the unit cube $\Omega = (0,1)^3$ with the Dirichlet boundary 
$\Gamma_{D} := \{\mathbf{x} \in \Gamma \,:\, \exists i\in\{1,2,3\} : \mathbf{x}_i = 0\}$ and the Neumann part
$\Gamma_{\mathcal{N}} = \Gamma \backslash \overline{\Gamma_{D}}$. 

Again, we consider the bilinear forms \eqref{eq:numericsMixed} and \eqref{eq:numericsNeumann}
corresponding to the weak formulations of the Dirichlet-Neumann Poisson problem and the stabilized
Neumann problem. \\

In Figure \ref{fig:3DDiff}, we  compare the decrease of 
$\norm{\mathbf{I}-\mathbf{A}\mathbf{B}_{\H}}_2$ with the increase 
in the block-rank for a fixed number $N = 32.768$ of degrees of freedom, where 
the largest block of $\mathbf{B}_{\H}$ has a size of 4.096. 

\begin{figure}[h]
\begin{center}
\psfrag{Error}[c][c]{%
 \footnotesize  Error}
\psfrag{Block rank r }[c][c]{%
 \footnotesize  Block rank r}
\psfrag{asd}[l][l]{\footnotesize $\exp(-2.3\, r^{1/2})$}
\psfrag{jkl}[l][l]{\footnotesize $\norm{I-AB_{\H}}_2$}
\includegraphics[width=0.48\textwidth]{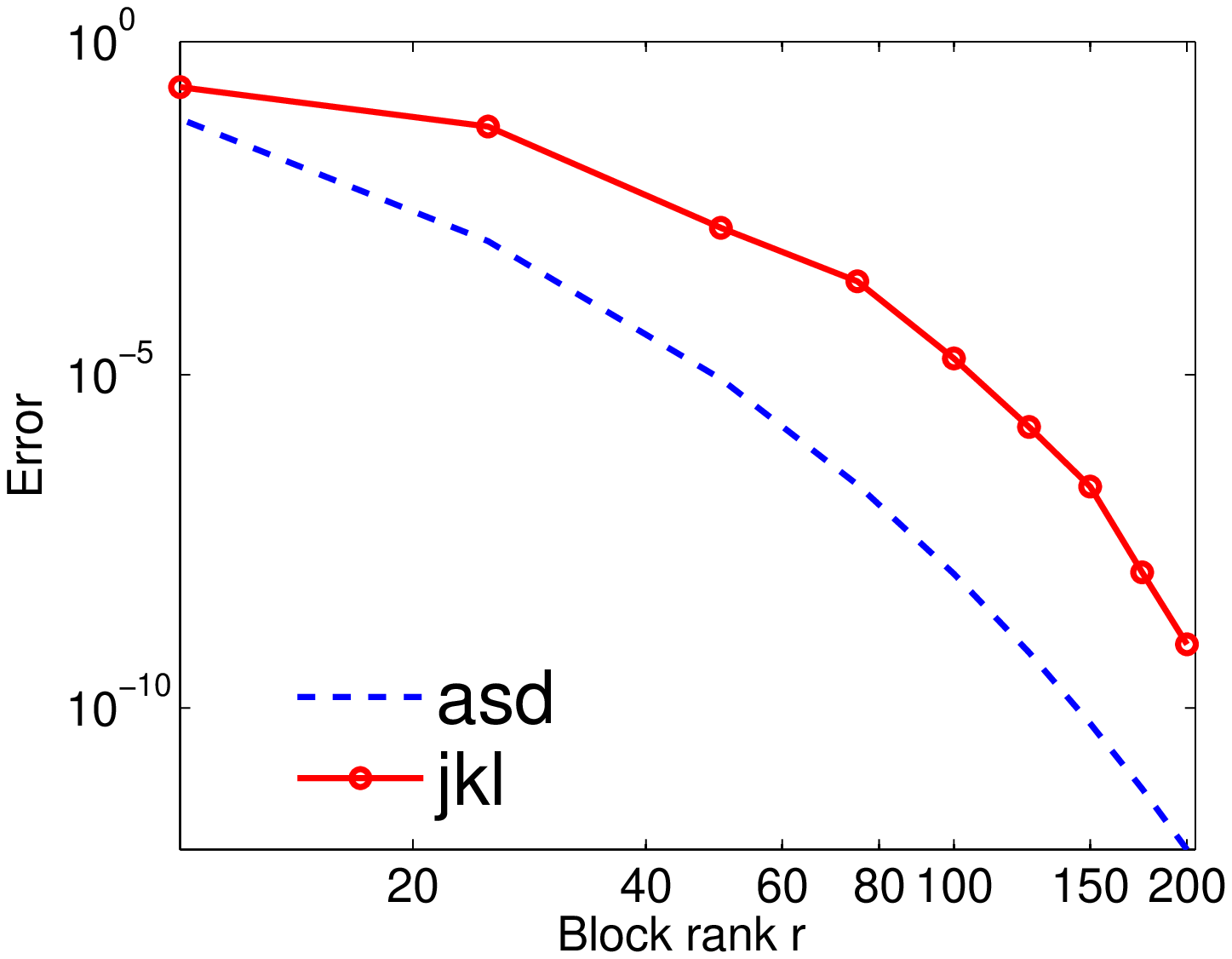}
\psfrag{asd}[l][l]{\footnotesize $\exp(-2.5\, r^{1/2})$}
\includegraphics[width=0.48\textwidth]{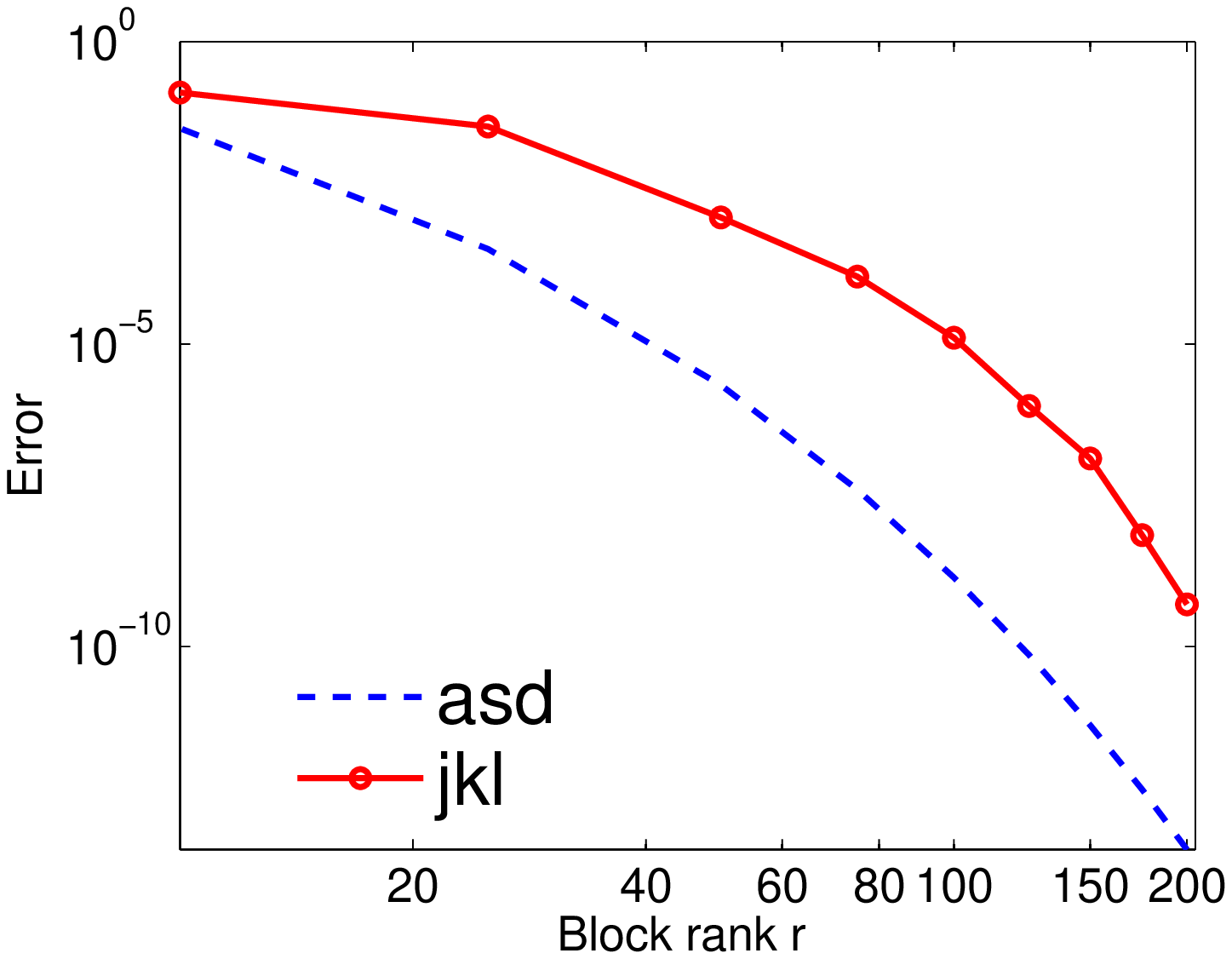}
\caption{\footnotesize Mixed boundary value problem (left), pure Neumann boundary value problem (right) in 3D}
\label{fig:3DDiff}
\end{center}
\end{figure}

Comparing the results with our theoretical bound from Theorem~\ref{th:Happrox},
 we empirically observe a rate of $e^{-br^{1/2}}$ instead
of $e^{-br^{1/4}}$.
Moreover, whether we study mixed boundary conditions or pure Neumann boundary conditions does not make any difference,
as both model problems lead to similar computational results.

\subsection{Convection-Diffusion}
Finally, we study a convection-diffusion problem on the L-shaped domain 
$\Omega = (0,1)\times (0,\frac{1}{2}) \cup (0,\frac{1}{2})\times[\frac{1}{2},1)$.
The boundary $\Gamma = \partial \Omega$ is divided into the
 Neumann part $\Gamma_{\mathcal{N}} := \{\mathbf{x} \in \Gamma \,:\, \mathbf{x}_2 = 0 \; \vee \; \mathbf{x}_1 = 1 \}$ and the  
Dirichlet part $\Gamma_D = \Gamma \backslash \overline{\Gamma_{\mathcal{N}}}$. 

We consider the bilinear form $a(\cdot,\cdot):H^1_0(\Omega,\Gamma_D)\times H^1_0(\Omega,\Gamma_D) \ra \R$
corresponding to the mixed Dirichlet-Neumann Poisson problem
\bea
a(u,v) := c\skp{\nabla u,\nabla v}_{L^2(\Omega)} + \skp{\mathbf{b}\cdot \nabla u, v}_{L^2(\Omega)}
\eea
with $c = 10^{-2}$ and $\mathbf{b}(x_1,x_2) = (-x_2,x_1)^T$
and use a lowest order Galerkin discretization in $S^{1,1}_0(\T_h,\Gamma_D)$. 

In Figure \ref{fig:ConvDiff}, we observe exponential convergence of the upper bound $\norm{\mathbf{I}-\mathbf{A}\mathbf{B}_{\H}}_2$
of the relative error with respect to the increase 
in the block-rank for a fixed number $N = 196.352$ of degrees of freedom, where 
the largest block of $\mathbf{B}_{\H}$ has a size of 24.544. 

\begin{figure}[h]
\begin{center}
\psfrag{Error}[c][c]{%
 \footnotesize  Error}
\psfrag{Block rank r }[c][c]{%
 \footnotesize  Block rank r}
\psfrag{asd}[l][l]{\footnotesize $\exp(-1.08\, r)$}
\psfrag{jkl}[l][l]{\footnotesize $\norm{I-AB_{\H}}_2$}
\includegraphics[width=0.48\textwidth]{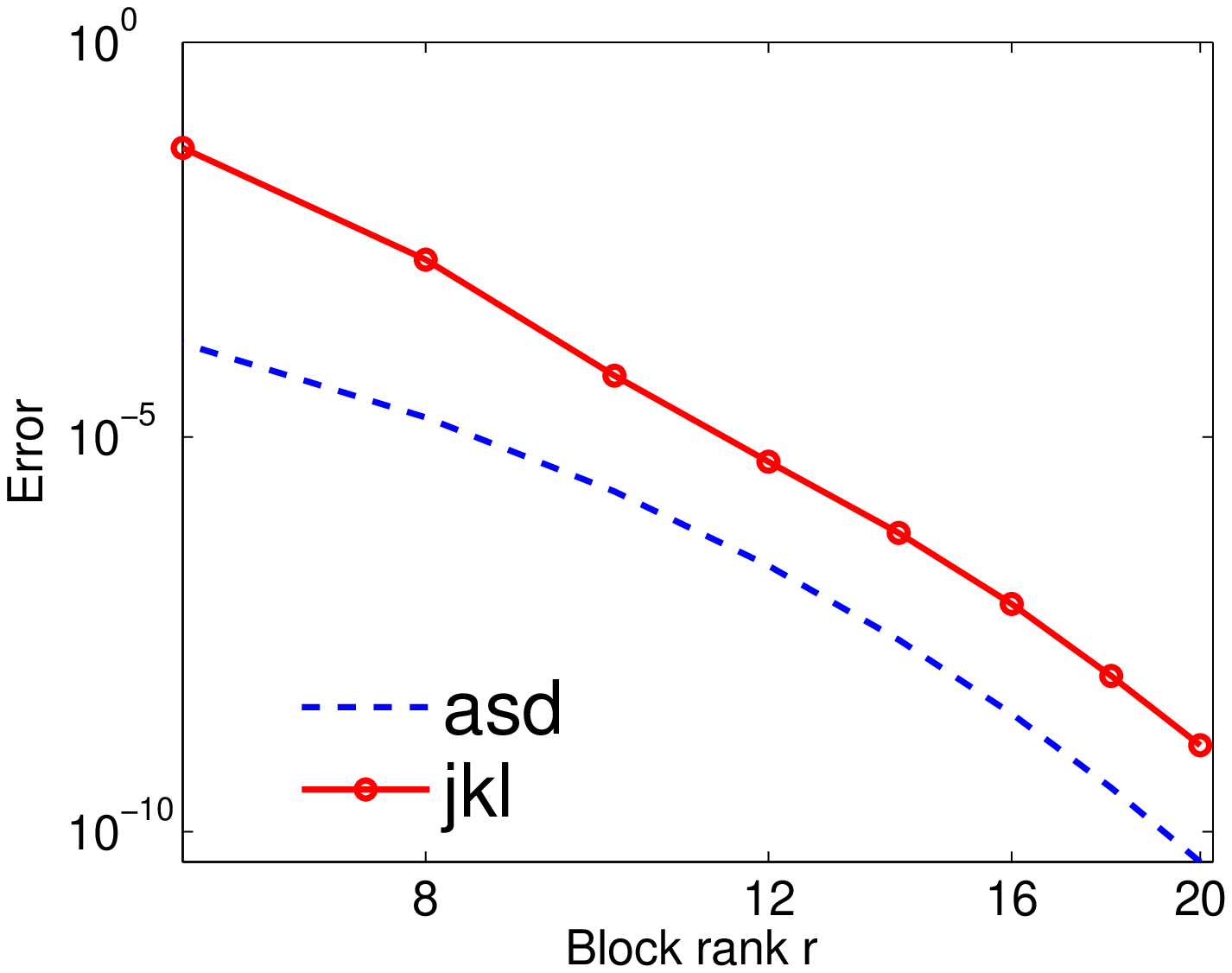}
\psfrag{asd}[l][l]{\footnotesize $\exp(-1.1\, r)$}
\includegraphics[width=0.48\textwidth]{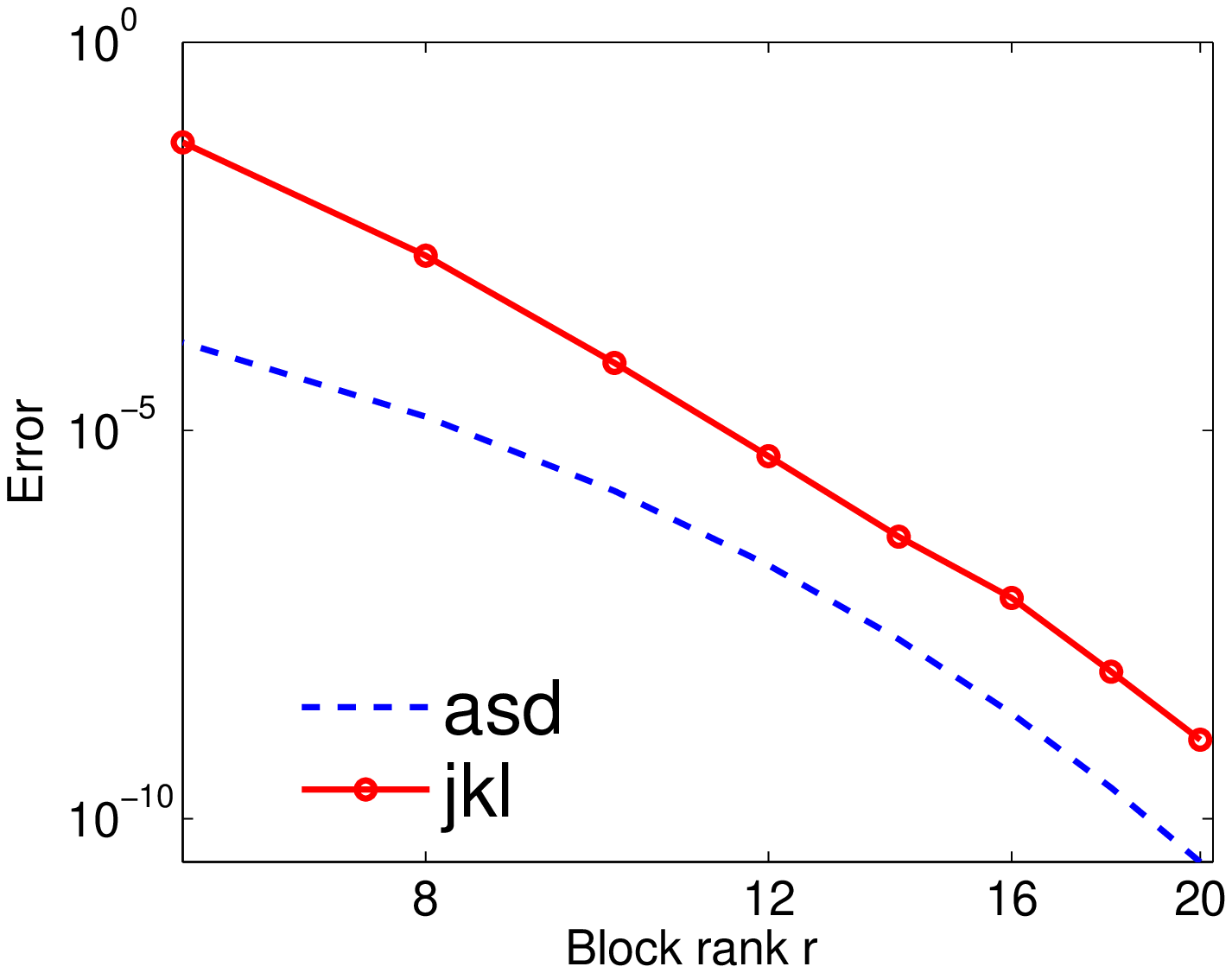}
\caption{\footnotesize 2D Convection-Diffusion: Mixed boundary value problem (left), pure Neumann boundary value problem (right).}
\label{fig:ConvDiff}
\end{center}
\end{figure}

\nocite{*}
\bibliography{bibliography_2}{}
\bibliographystyle{amsalpha}

\end{document}